\newtheorem{thm}{Theorem}[section]
\newtheorem{lem}[thm]{Lemma}
\newtheorem{rem}[thm]{Remark}
\newtheorem{dfi}[thm]{Definition}
\newtheorem{exm}[thm]{Example}
\newtheorem{pro}[thm]{Proposition}
\newtheorem{assumption}{Assumption}
\newcommand{\E}{{\mathcal E}}
\newcommand{\RR}{{\mathbb R}}
\newcommand{\FC}{{\cal FC}^{\infty}_b}
\newcommand{\ep}{\varepsilon}
\newcommand{\la}{\lambda}
\newcommand{\Ricbar}{\overline{\text{\rm Ric}(\gamma)}_t}
\newcommand{\DVbar}{\overline{\nabla V_{y_0}^{\la}(t,\gamma)}_t}
\newcommand{\grad}{\text{\rm grad}}
\newcommand{\cxy}{c_{x_0,y_0}}
\newcommand{\cyx}{c_{y_0,x_0}}
\renewcommand{\H}{{\rm H}}
\newcommand{\Ho}{{\rm H}_0}
\newcommand{\Rreverse}{R^{\leftarrow}}
\renewcommand{\Im}{{\rm Image}}
\newcommand{\esssup}{{\rm esssup}}
\newcommand{\ldirichlet}{e^{\lambda}_{Dir,1,{\mathcal D}}}
\newcommand{\dirichlet}{e^{\lambda}_{Dir,2,{\mathcal D}}}
\newcommand{\Ltwo}{{\rm L^2}}
\newcommand{\Ltwoo}{{\rm L^2_0}}
\newcommand{\chio}{\chi_{0,\kappa}}
\numberwithin{equation}{section}
\begin{document}
\newcounter{aaa}
\newcounter{bbb}
\newcounter{ccc}
\newcounter{ddd}
\newcounter{eee}
\newcounter{fff}
\newcounter{xxx}
\newcounter{xxxiv}
\newcounter{xvi}
\newcounter{x}
\setcounter{aaa}{1}
\setcounter{bbb}{2}
\setcounter{ccc}{3}
\setcounter{ddd}{4}
\setcounter{fff}{6}
\setcounter{eee}{32}
\setcounter{xxx}{10}
\setcounter{xxxiv}{34}
\setcounter{xvi}{16}
\setcounter{x}{38}
\title
{Asymptotics of spectral gaps
on loop spaces\\
over a class of Riemannian manifolds}
\author{Shigeki Aida\\
Mathematical Institute\\
Tohoku University,
Sendai, 980-8578, JAPAN\\
e-mail: aida@math.tohoku.ac.jp}
\date{}
\maketitle
\begin{abstract}
We prove the existence of spectral gaps of
Ornstein-Uhlenbeck operators on loop spaces over a
class of Riemannian manifolds which
include hyperbolic spaces.
This is an alternative proof and an extension of a result in
Chen-Li-Wu in J. Funct. Anal. 259 (2010), 1421-1442.
Further, we study the asymptotic behavior of the
spectral gap.

{\it Keywords} :\, spectral gap, loop space, logarithmic Sobolev
 inequality, semi-classical limit
\end{abstract}

\section{Introduction}
Let $E$ be a smooth non-negative function on a Riemannian manifold $X$.
Let $\la$ be a positive number and
consider a weighted probability measure 
$d\nu^{\la}(x)=Z_{\la}^{-1}e^{-\la E(x)}dx$ on $X$,
where $Z_{\la}$ denotes the normalized constant 
and $dx$ denotes the Riemannian volume.
We consider a Dirichlet form on $L^2(X,d\nu^{\la})$
such that
$$
\E^{\la}(F,F)=\int_{X}|\nabla F(x)|^2d\nu^{\la}(x),
$$
where $\nabla$ denotes the Levi-Civita covariant derivative.
Under mild assumptions on $E$ and the Riemannian metric,
$1\in {\rm D}(\E^{\la})$ and
the corresponding lowest eigenvalue $e^{\la}_1$
of the generator of the Dirichlet form
is $0$.
The spectral gap $e^{\la}_2$ of $\E^{\la}$ is defined by
\begin{align}
 e_2^{\la}&=\inf\Bigl\{\E^{\la}(F,F)~\Big |~\|F\|_{L^2(\nu^{\la})}=1,
\int_XF(x)d\nu^{\la}(x)=0\Bigr\}.
\end{align}
The study on the estimate and the asymptotic behavior of
$e^{\la}_2$
as $\la\to\infty$ is an interesting and important subject.
In this problem,
one of the simplest cases is the following:
\begin{enumerate}
\item[(i)] $E$ has a unique minimum point $c_0$ and 
there are no critical points other than $c_0$,
\item[(ii)] the Hessian of $E$ at $c_0$ is non-degenerate.
\end{enumerate}
In this case, under some additional technical assumptions,
it holds that 
$\lim_{\la\to\infty}\frac{e_2^{\la}}{\la}=\sigma_1$,
where $\sigma_1$ is the lowest eigenvalue of the 
Hessian of $E$ at $c_0$.
When $X=\RR^N$ and $E(x)=\frac{|x|^2}{2}$,
the generator of the Dirichlet form is called 
the Ornstein-Uhlenbeck operator(=OU operator) and the spectral set is 
completely known.

We are interested in the case where $X$ is 
an ``infinite dimensional Riemannian manifold''
and $\nu^{\la}$ is a probability measure on it.
Let us explain our model.
Let $(M,g)$ be an $n$-dimensional complete Riemannian manifold.
Let $x_0, y_0\in M$ and consider 
a space of continuous paths
$P_{x_0}(M)=C([0,1]\to M~|~\gamma(0)=x_0)$
and its subset
$P_{x_0,y_0}(M)=\{\gamma\in P_{x_0}(M)~|~\gamma(1)=y_0\}$.
Our $X$ is $P_{x_0}(M)$ or $P_{x_0,y_0}(M)$ and
$\nu^{\la}$ is the (pinned) Brownian motion
measure.
The transition probability of the Brownian motion is given by
$p(t/\la,x,y)$, where $p(t,x,y)$
denotes the heat kernel of the diffusion semigroup
$e^{t\Delta/2}$ and
$\Delta$ is the Laplace-Bertlami operator.
In many problems, we use the following 
heuristically
appealing path integral expression,
$$
d\nu^{\la}(\gamma)=\frac{1}{Z_{\la}}
\exp\left(-\la E(\gamma)\right)
d\gamma,
$$
where $E(\gamma)$ is the energy of
path $\gamma$
and $d\gamma$ is the ``infinite dimensional Riemannian measure''.
Of course, the energy function cannot be defined on the continuous 
path spaces on which the (pinned) Brownian motion measures exist
and there do not exist the ``Riemannian measures'' on 
the infinite dimensional spaces.
We refer the reader to \cite{andersson-driver, lim, laetsch} for some
rigorous study of the path integral.
On the other hand, by using an $H$-derivative $D$ on $X$
(see the definition in Section~3),
we can define a Dirichlet form $\E^{\la}$ on $L^2(X,d\nu^{\la})$.
Our interest is in the study of the spectral gap of
$\E^{\la}$.
Since
the triple $(X,\nu^{\la}, \E^{\la})$ is formally 
an infinite dimensional analogue of the finite dimensional one,
we may conjecture some results on the asymptotics of the spectral gap.

In the case where $X=P_{x_0}(M)$, the critical point of
$E$ on the subset of $H^1$ paths is just a
constant path and this problem corresponds to
the simplest case which we explained.
Fang~\cite{fang} proved the existence of the
spectral gap by establishing the COH(=Clark-Ocone-Haussmann) formula
for functions on $X=P_{x_0}(M)$.
Also it is not difficult to prove that
$\lim_{\la\to\infty}\frac{e_2^{\la}}{\la}=1$
by using the COH formula.
We prove this in Section 3.
Here note that the Hessian of $E$
at the constant path is identity.
On the other hand, if $X$ is the pinned space $P_{x_0,y_0}(M)$,
the set of critical points of the functional 
$E$
on the set of $H^1$ paths of $P_{x_0,y_0}$ is 
the set of geodesics.
Therefore,
by an analogy of finite dimensional cases,
one may expect that the asymptotic behavior of the low-lying spectrum
of the generator of $\E_{\la}$ is related to the set of
the geodesics in this case.
However, it is not even easy to find examples of Riemannian manifolds
on which loop spaces the spectral gaps exist.
In fact, Eberle~\cite{eberle1} gave an example of a Riemannian manifold
which is diffeomorphic to
a sphere over which there is no spectral gap on the loop space.
At the moment, there are no examples of loop spaces over 
simply connected compact Riemannian
manifold for which the spectral gap exists.

If $M$ is a Riemannian manifold with a pole $y_0$,
the situation is simpler.
In this case, the function $E$ defined on the 
$H^1$ subset of $P_{x_0,y_0}(M)$
satisfies the above mentioned 
assumptions (i) and (ii).
The author proved the existence of spectral
gap in that case under additional strong assumptions on
the Riemannian metric in \cite{aida-precise}.
Unfortunately, the assumption is not valid for
hyperbolic spaces.
The existence of the spectral
gap on loop spaces over hyperbolic spaces
was proved by Chen-Li-Wu~\cite{clw1} for the first time
(see \cite{clw11} also).
They used results in \cite{aida-coh, cgg}.
We give an alternative proof of their result
and prove that $\lim_{\la\to\infty}\frac{e_2^{\la}}{\la}=\sigma_1$,
where $\sigma_1$ is the spectrum bottom of the Hessian of $E$ at the
unique geodesic for a certain class of Riemannian manifolds.

Now let us recall a rough idea how to prove the asymptotic behavior of
$e_2^{\la}$ under the assumptions (i), (ii) when 
$X$ is a finite dimensional space.
By the unitary transformation $M_{\la} : F(\in L^2(d\nu^{\la}))\mapsto
F\left(Z_{\la}^{-1}e^{-\la E}\right)^{1/2}(\in L^2(dx))$, the problem 
is changed to determine the limit of the gap of spectrum
of a Schr\"odinger operator.
In this context, $\la\to\infty$ corresponds to
the semi-classical limit of a physical system.
In a small neighborhood of $c_0$, the Schr\"odinger operator
can be approximated by a harmonic oscillator and we obtain 
the main term of the divergence of $e_2^{\la}$.
As for outside the neighborhood,
the potential function is very large and it has nothing to do with
low energy
part of the operator.
In the present infinite dimensional problems,
we cannot use the unitary transformation
since there does not exist Riemannian volume measure and 
the function $E$ cannot be defined on the whole space $X$.
Moreover, there are difficulties in
the proof of each parts, (a) Local estimate in a neighborhood
$U(c_0)$ of the minimizer,
(b) Estimate outside $U(c_0)$.
In the problem (a), one may think that 
the problem can be reduced to
a Gaussian measure case by a certain 
``local diffeomorphism''.
A natural candidate of the local diffeomorphism is 
an It\^o map.
Certainly, the mapping is measure preserving
but the derivative of the mapping does not behave well
because of the irregularity of the Brownian paths
~\cite{driver1, cruzeiro-malliavin, elworthy-li1}.
In problem (b), it is not clear how to use ``the potential function is
big''
outside $U(c_0)$.
To solve these problems, we use COH formula and
a logarithmic Sobolev inequality on $X$.
Clearly, it is more interesting to
consider the cases where there are two or more local minimum points of
$E$.
We refer the reader to \cite{hks, hf} and references therein
for finite dimensional cases.
Also we note that Eberle~\cite{eberle3} 
studied such a problem on certain approximate spaces 
of loop spaces.

The paper is organized as follows.
We already explained a rough idea of a proof of the
asymptotic behavior 
of $e^{\la}_2$.
In Section 2, we give a different proof
based on a log-Sobolev inequality.
Our proof for loop spaces is a modification of the proof.
Also we explain the difficulty of the proof
in the case of loop spaces.

In Section 3,
we prepare necessary definitions and
lemmas and explain our main theorems for
$P_{x_0,y_0}(M)$.
In this case, the minima $c_0$ is the minimal geodesic 
$c_{x_0,y_0}$ between $x_0$ and $y_0$.
As we explained, we need local analysis in a neighborhood of
$c_{x_0,y_0}$ of
the generators of Dirichlet forms.
Thus we consider an OU operator
with Dirichlet boundary condition on
a small neighborhood ${\cal D}$ of the minimal geodesic in a loop space
over a Riemannian manifold.
We define the 
generalized second lowest eigenvalue 
$\dirichlet$ of the Dirichlet
Laplacian and determine the
asymptotic behavior of
$\dirichlet$
in our first main theorem (Theorem~\ref{main theorem 1}).

In the second main theorem (Theorem~\ref{main theorem 2}), we consider 
a rotationally symmetric 
Riemannian manifold $M$ with a pole $y_0$
and a loop space $P_{x_0,y_0}(M)$, where
$x_0$ is an arbitrary point of $M$.
Under certain assumptions on the Riemannian metric,
we prove the existence of the spectral gap and 
determine the asymptotic behavior of
$e^{\la}_2$.
The class of Riemannian manifolds includes the hyperbolic spaces.
Actually, the same result as in the second main theorem
holds true under the validity of
a certain log-Sobolev inequality and a tail estimate of
a certain random variable describing the size of
$\gamma$.
The log-Sobolev inequality can be proved by a
COH-formula on $P_{x_0,y_0}(M)$.
The diffusion coefficient of the Dirichlet form in the
log-Sobolev inequality is unbounded and it is still an
open problem whether a log-Sobolev inequality with 
a bounded coefficient
holds on a loop space over a hyperbolic space .

In this paper, the COH formula plays a crucial role.
Let us recall what COH formula is.
Let $F$ be an $L^2$ random variable on $P_{x_0}(M)$.
By the It\^o theorem,
$F-E^{\nu^{\la}}[F]$ can be represented as a
stochastic integral with respect to 
the Brownian motion $b$ which is obtained as an
anti-stochastic development of $\gamma$ to
$\RR^n$ (\cite{hsu}).
The COH formula gives an explicit form of the integrand
as a conditional expectation 
of the $H$-derivative $DF$.
As we noted, Fang proved the COH formula on
$P_{x_0}(M)$ when $M$ is a compact Riemannian manifold.
But it is not difficult to prove the same formula
for more general Riemannian manifold
(see Lemma~\ref{fang COH formula}).
In the case of $P_{x_0,y_0}(M)$, it is necessary
to consider a Brownian motion $w$ under the
pinned measure which is obtained by adding a singular drift 
to $b$.
The singular drift 
is defined by a logarithmic derivative
of $p(t,y_0,z)$.
For this, see Lemma~\ref{coh formula} and \cite{aida-coh, aida-coh2}.
In both cases of $P_{x_0}(M)$ and $P_{x_0,y_0}(M)$,
the integrand in the COH formula is the conditional expectation of
the quantity $A(\gamma)_{\la}(DF')$, where $A(\gamma)_{\la}$ is a
certain bounded linear operator depending on
the path $\gamma$ and $\la$.
$A(\gamma)_{\la}$ for $P_{x_0}(M)$
is defined by the Ricci curvature and
the operator norm is uniformly bounded for large
$\la$.
On the other hand,
in the case of $P_{x_0,y_0}(M)$,
the definition of 
$A(\gamma)_{\la}$ contains the Hessian of the heat kernel,
$\nabla_z^2\log p(t/\la,y_0,z)$ $(0<t\le 1)$
because the stochastic differential equation of $\gamma$ 
contains the singular drift term of the logarithmic derivative of
the heat kernel.
To control
this term,
we need results for a short time behavior of
$\lim_{t\to 0}\nabla_z^2\log p(t,x,z)$ which were studied for the first
time by Malliavin and Stroock~\cite{ms}
(see (\ref{loghessian 0}) and Lemma~\ref{gong-ma}).
In view of this, it is easier to study
the spectral gap for $P_{x_0}(M)$ than that for $P_{x_0,y_0}(M)$.
In the final part of this section, we prove 
$\lim_{\la\to\infty}\frac{e_2^{\la}}{\la}=1$ for $P_{x_0}(M)$.


In order to show the precise asymptotics of
$\dirichlet$ and $e^{\la}_2$, we need to
identify $A(\cxy)_{\infty}=\lim_{\la\to\infty}A(\cxy)_{\la}$.
This is necessary for local analysis near $\cxy$.
In Section 4, first we formally show that
$A(\cxy)_{\infty}$
is an operator which is defined by
the Hessian of the square of the distance function
$k(z)=\frac{d(z,y_0)^2}{2}$.
After that we prove a key relation between
the Hessian of the energy function $E$ at $\cxy$ and
$A(\cxy)_{\infty}$.
In that proof, Jacobi fields along the geodesic
play an important role.

In Section 5, we prove Theorem~\ref{main theorem 1}.
The proof ${\rm LHS}\le {\rm RHS}$ in $(\ref{main theorem 1 identity})$
relies on an explicit representation $(\ref{representation of e2})$ of 
$\dirichlet$ by the unique eigenfunction (ground state)
$\Psi_{\la}$
associated with the first eigenvalue of the Dirichlet Laplacian.
By using this representation and a trial function,
we prove the upper bound.
The trial function is closely related with
``eigenfunctions'' associated with the bottom of the spectrum
of the Hessian of the energy function $E$ at $\cxy$.

As already mentioned, we need to study 
$A(\cxy)_{\infty}$.
In addition,
we need to show that $A(\gamma)_{\la}$ can be approximated by
$A(\cxy)_{\infty}$ when $\gamma$ is close to $\cxy$ and
$\la$ is large.
This is correct but not trivial because
$A(\gamma)_{\la}$ is defined by solutions of It\^o's
stochastic differential equations driven by $b$
and the solution mappings are not continuous
in usual topology such as the uniform convergence topology.
Actually the solution mappings are continuous
in the topology of rough paths.
Thus, we need to
apply rough path analysis to our problem.
Note that the law of $b$ under the pinned measure 
is singular with respect to the Brownian motion measure.
However, the probability distribution of $b$ does not charge
the slim sets in the sense of Malliavin.
Hence, we need to consider Brownian rough paths 
for all Brownian paths except a slim set (\cite{aida-loop group}).
After preparation of necessary estimates from rough paths
(see Lemma~\ref{lemma from rough path}), we prove Theorem~\ref{main
theorem 1}.

In Section 6, we prove 
the existence of the spectral gap in
a certain general setting as in \cite{clw1}.
This third main theorem (Theorem~\ref{main theorem 3}) 
implies the first half of the statement in
Theorem~\ref{main theorem 2}.
In Section 7, we complete the proof of
Theorem~\ref{main theorem 2}.

\section{A proof in $\RR^N$ and some remarks}

In this section, we show a proof of the asymptotics
$\lim_{\la\to\infty}\frac{e^{\la}_2}{\la}=\sigma_1$
on $\RR^N$
under the validity of a log-Sobolev inequality.
Our proof for $P_{x_0,y_0}(M)$ is
a suitable modification of this proof.
In this section, $D$ stands for the usual Fr\'echet derivative on $\RR^N$.

Let $E$ be a non-negative $C^{\infty}$ function on $\RR^N$
and suppose the following (1), (2), (3), (4).

\begin{enumerate}
 \item[(1)] $E(0)=0$ and $0$ is the unique minimum point and
$D^2E(0)>0$.
Further $\liminf_{|x|\to\infty}E(x)>0$.
\item[(2)] Let $\la>0$.
Suppose that $e^{-\la E(x)}$ is an integrable function and
define a probability measure,
\begin{align}
 \nu^{\la}(dx)=Z_{\la}^{-1}e^{-\la E(x)}dx,
\end{align}
where $Z_{\la}=\int_{\RR^N}e^{-\la E(x)}dx$.
\item[(3)] 
Let $\E^{\la}(F,F)=\int_{\RR^N}|DF(x)|^2d\nu^{\la}(x)$,
where $F\in C^{\infty}_0(\RR^N)$.
Also let $\E^{\la}$ denote the Dirichlet form which
is the closure of the closable form.
It holds that $|x|, 1\in {\rm D}(\E^{\la})$
and $\E^{\la}(1,1)=0$ for all $\la>0$.
The notation $|\cdot|$ denotes the usual Euclidean norm.
\item[(4)]
There exists a constant $C>0$ such that the following
log-Sobolev inequality holds:
\begin{align}
\int_{\RR^N} 
F(x)^2\log \left(F(x)^2/\|F\|_{L^2(\nu_{\la})}^2\right)
d\nu^{\la}(x)
&\le
\frac{C}{\la}\E^{\la}(F,F),\quad
F\in {\rm D}(\E^{\la}).
\label{LSI RN}
\end{align}
\end{enumerate}

Clearly the spectral bottom $e_1^{\la}$ of the Dirichlet form
$\E^{\la}$ is $0$.
Under the above assumptions, we prove that

\begin{thm}\label{RN}
Let $e_2^{\la}$ be the spectral gap of $\E^{\la}$.
Then
 \begin{align}
  \lim_{\la\to\infty}\frac{e_2^{\la}}{\la}=\sigma_1,
 \end{align}
where $\sigma_1$ denotes the smallest eigenvalue of
the matrix $D^2E(0)$.
\end{thm}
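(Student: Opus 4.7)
The proof splits into matching upper and lower bounds on $e_2^\la/\la$. For the upper bound I construct an explicit trial function linear along the bottom eigendirection of $D^2E(0)$; for the lower bound I rescale by $\sqrt{\la}$, use \eqref{LSI RN} as the source of tightness and compactness, and identify the limiting variational problem with the spectral-gap problem of a Gaussian measure, whose spectral gap equals $\sigma_1$. This is precisely the template to be transplanted in later sections to $P_{x_0,y_0}(M)$.

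\textbf{Upper bound.} Let $v\in\RR^N$ be a unit eigenvector of $D^2E(0)$ with eigenvalue $\sigma_1$, and let $\chi_R\in C^\infty_0(\RR^N)$ be a cutoff with $\chi_R\equiv 1$ on $B_R$, supported in $B_{2R}$, with $|D\chi_R|\le 2/R$. Set
$$F_{R,\la}(x)=\chi_R(x)\langle v,x\rangle-\int \chi_R(y)\langle v,y\rangle\,d\nu^\la(y),$$
so $F_{R,\la}\in {\rm D}(\E^\la)$ has $\nu^\la$-mean zero. Laplace asymptotics under assumptions (1)--(2) show that the pushforward $\tilde\nu^\la$ of $\nu^\la$ under $x\mapsto\sqrt{\la}x$ converges weakly to $\mu:=N(0,(D^2E(0))^{-1})$; taking $\la\to\infty$ and then $R\to\infty$ gives $\la\cdot\mathrm{Var}_{\nu^\la}(\chi_R\langle v,\cdot\rangle)\to \sigma_1^{-1}$ and $\E^\la(F_{R,\la},F_{R,\la})\to 1$. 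Plugged into the Rayleigh quotient, this yields $\limsup_{\la\to\infty}e_2^\la/\la\le \sigma_1$.

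\textbf{Lower bound.} Take an approximate minimizer $F_\la$ with $\int F_\la\,d\nu^\la=0$, $\|F_\la\|_{L^2(\nu^\la)}=1$, and $\E^\la(F_\la,F_\la)\le e_2^\la+\la^{-1}$, and define $\tilde F_\la(y):=F_\la(y/\sqrt{\la})$ on $(\RR^N,\tilde\nu^\la)$. A direct change of variables gives
$$\int\tilde F_\la\,d\tilde\nu^\la=0,\quad \|\tilde F_\la\|_{L^2(\tilde\nu^\la)}=1,\quad \int|D_y\tilde F_\la|^2\,d\tilde\nu^\la=\la^{-1}\E^\la(F_\la,F_\la).$$
The key point is that the same rescaling leaves \eqref{LSI RN} invariant with a constant $C$ independent of $\la$: $\int G^2\log(G^2/\|G\|^2)\,d\tilde\nu^\la\le C\int|D_y G|^2\,d\tilde\nu^\la$ for all admissible $G$. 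Combined with the bound $e_2^\la/\la=O(1)$ from the upper half, this gives a uniform entropy bound on $\tilde F_\la^2\,d\tilde\nu^\la$, hence uniform integrability, and—via the Herbst argument applied to \eqref{LSI RN}—uniform tightness of $\tilde\nu^\la$. Extracting a subsequential limit $\tilde F$ in the joint sense appropriate to varying measures, one obtains $\int\tilde F\,d\mu=0$, $\|\tilde F\|_{L^2(\mu)}=1$, and by lower semicontinuity $\int|D\tilde F|^2\,d\mu\le\liminf_{\la\to\infty}e_2^\la/\la$. Since the Ornstein--Uhlenbeck form on $(\RR^N,\mu)$ has spectral gap exactly $\sigma_1$, this yields $\liminf e_2^\la/\la\ge\sigma_1$.

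\textbf{Main obstacle.} The delicate step is the joint passage to the limit $(\tilde F_\la,\tilde\nu^\la)\to(\tilde F,\mu)$: because both the function and the underlying measure move with $\la$, standard weak compactness in a fixed Hilbert space is not directly available. The log-Sobolev inequality is the single tool that supplies all three ingredients needed to push this through—tightness of $\tilde\nu^\la$ via Herbst, uniform integrability of $\tilde F_\la^2$ via the entropy bound, and the lower-semicontinuity of the Dirichlet energy along the joint convergence. Ruling out loss of mass at infinity (so that the limit indeed satisfies $\|\tilde F\|_{L^2(\mu)}=1$ rather than vanishing) is the real content of the argument and is exactly what would fail without \eqref{LSI RN}. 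It is this same obstacle, in its infinite-dimensional incarnation on $P_{x_0,y_0}(M)$, that forces the later use of the COH-based log-Sobolev inequality and the rough-path control of $A(\gamma)_\la$ near $\cxy$.
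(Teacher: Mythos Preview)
Your upper bound matches the paper's: both use a linear trial function along the bottom eigendirection of $D^2E(0)$ (the paper dispenses with the cutoff $\chi_R$, taking $F^\la(x)=\sqrt{\la\sigma_1}\,(v,x)$ directly since $|x|\in{\rm D}(\E^\la)$ by assumption~(3)).

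Your lower bound is a genuinely different argument. The paper does \emph{not} rescale and pass to a limit by compactness. Instead it applies the IMS localization formula with a partition $\chi_{0,\kappa}^2+\chi_{1,\kappa}^2=1$: for the far piece $F\chi_{1,\kappa}$ it uses the GNS lower bound \eqref{GNS0} implied by \eqref{LSI RN}, together with the tail estimate $\nu^\la(|x|\ge\kappa)\le K_\kappa e^{-\la M_\kappa}$, to obtain $\E^\la(F\chi_{1,\kappa},F\chi_{1,\kappa})\ge\delta'\la^2\|F\chi_{1,\kappa}\|^2$; for the near piece $F\chi_{0,\kappa}$ it invokes the Morse lemma to change coordinates so that $E$ becomes exactly $\tfrac12|y|^2$, and then reads off the Gaussian spectral gap directly. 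Everything is explicit for each fixed $\la$; no subsequential limit extraction is needed.

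Your compactness route can be completed in $\RR^N$---Rellich on balls supplies the strong $L^2_{\rm loc}$ convergence of $\tilde F_\la$, and the entropy bound blocks mass loss---but you have left the ``joint passage to the limit'' as an acknowledged sketch rather than a proof. More importantly, your claim that this is ``precisely the template to be transplanted in later sections to $P_{x_0,y_0}(M)$'' is not what the paper does, and for good reason: on the loop space there is no Rellich-type compactness to fall back on. The paper's proofs of Theorems~\ref{main theorem 1} and~\ref{main theorem 2} follow the IMS template of its Section~2 argument, with the COH formula (Lemma~\ref{coh formula}) and the rough-path continuity of $A(\gamma)_\la$ playing the role of the Morse lemma for the near piece, and the GNS inequality derived from the log-Sobolev inequality \eqref{LSI loop} handling the far piece exactly as \eqref{GNS0} does here. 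A compactness strategy of the kind you outline would not survive the passage to infinite dimensions.
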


The log-Sobolev inequality (\ref{LSI RN})
implies the bound
$e_2^{\la}\ge 2\la/C$ for all $\la$.
So it holds that $C\sigma_1\ge 2$.
Note that the assumption in the above is very strong and
we cannot say the result is ``nice''.

\begin{proof}
We prove the lower bound estimate
$\liminf_{\la\to\infty}\frac{e^{\la}_2}{\la}\ge \sigma_1$.
By the assumptions (1) and (2), we have for any $r>0$ there exists $K_{r}$ and
$M_{r}$ such that 
\begin{align}
\nu^{\la}\left(|x|\ge r\right)\le 
K_re^{-\la M_r}\qquad \mbox{for all $\la\ge 1$}\label{tail estimate0}
\end{align}
and
\begin{align}
\lim_{\la\to\infty} \left(\frac{\la}{2\pi}\right)^{N/2}Z_{\la}
=\det\left(D^2E(0)\right)^{-1/2}.\label{laplace}
\end{align}
The estimate (\ref{laplace}) can be proved by Laplace's method.
From now on,  we always assume $\la\ge 1$.
The log-Sobolev inequality (\ref{LSI RN}) implies that
for any bounded measurable function $V$, it holds that
\begin{align}
 \E^{\la}(F,F)+\int_{\RR^N}V(x)F(x)^2d\nu^{\la}(x)
&\ge 
-\frac{\la}{C}\log\left(\int_{\RR^N}e^{-\frac{C}{\la}V}d\nu^{\la}\right)
\|F\|_{L^2(\nu^{\la})}^2,\label{GNS0}
\end{align}
 where the constant $C$ is the same number as in
(\ref{LSI RN}).
We refer the reader to \cite{gross} for this estimate.
Let $\chi_0$ be a smooth function
with $\chi_0(u)=1$ for $|u|\le 1$
and $\chi_0(u)=0$ for $|u|\ge 2$.
Let $\kappa>0$ be a small number and set
$\chi_{0,\kappa}(x)=\chi_0(\kappa^{-1}|x|)$
and
$\chi_{1,\kappa}(x)=\sqrt{1-\chi_{0,\kappa}^2(x)}$.
Let $F\in {\rm D}(\E^{\la})$ and assume
$\|F\|_{L^2(\nu^{\la})}=1$ and
$\int_{\RR^N}F(x)d\nu^{\la}(x)=0$.
By an elementary calculation, 
\begin{align}
 \E^{\la}(F,F)&=\E^{\la}(F\chi_{0,\kappa},F\chi_{0,\kappa})
+\E^{\la}(F\chi_{1,\kappa},F\chi_{1,\kappa})\nonumber\\
&\quad
 -\int_{\RR^N}\left(|D\chi_{0,\kappa}|^2+|D\chi_{1,\kappa}|^2\right)F(x)^2
d\nu^{\la}(x).\label{finite dim 1}
\end{align}
This identity is called the IMS localization formula
(\cite{simon}).
We have
$|D\chi_{0,\kappa}|^2+|D\chi_{1,\kappa}|^2\le C'\kappa^{-2}$.
By applying (\ref{GNS0}),
\begin{align}
 \E^{\la}(F\chi_{1,\kappa},F\chi_{1,\kappa})&=
\E^{\la}(F\chi_{1,\kappa},F\chi_{1,\kappa})-\int_{\RR^N}
\delta\la^2(F\chi_{1,\kappa})^21_{|x|\ge \kappa}d\nu^{\la}\nonumber\\
&\quad+
\int_{\RR^N}
\delta\la^2(F\chi_{1,\kappa})^21_{|x|\ge \kappa}d\nu^{\la}\nonumber\\
&\ge -\frac{\la}{C}\log\left(\int_{\RR^N}
e^{\delta C\la 1_{|x|\ge \kappa}}d\nu^{\la}
\right)\|F\chi_{1,\kappa}\|_{L^2(\nu^{\la})}^2
+\delta\la^2\|F\chi_{1,\kappa}\|_{L^2(\nu^{\la})}^2\nonumber\\
&\ge \left\{-\frac{\la}{C}
\log\left(1+K_{\kappa}e^{\delta C\la-M_{\kappa}\la}\right)
+\delta\la^2
\right\}\|F\chi_{1,\kappa}\|_{L^2(\nu^{\la})}^2\nonumber\\
&\ge \left\{-\frac{\la}{C}K_{\kappa}e^{(\delta C-M_{\kappa})\la}
+\delta\la^2\right\}\|F\chi_{1,\kappa}\|_{L^2(\nu^{\la})}^2,
\end{align}
where we have used 
(\ref{tail estimate0}).
Thus, by choosing $\delta$ so that $\delta C<M_{\kappa}$, there exists
 $\delta'>0$
such that for large $\la$,
\begin{align}
 \E^{\la}(F\chi_{1,\kappa},F\chi_{1,\kappa})
&\ge \delta'\la^2\|F\chi_{1,\kappa}\|_{L^2(\nu^{\la})}^2.
\label{finite dim 2}
\end{align}
We estimate $\E^{\la}(F\chi_{0,\kappa},F\chi_{0,\kappa})$.
Note that the support of $F\chi_{0,\kappa}$ is
included in $\{x~|~|x|\le 2\kappa\}$.
Let $V=\{x~|~|x|< 3\kappa\}$.
For small $\kappa$,
by the Morse lemma, there exists an open neighborhood of $0$
$U$ and a
$C^{\infty}$-diffeomorphism
$\Phi$ : $y(\in U)\mapsto x(\in V)$ such that $\Phi(0)=0$ and
$E\left(\Phi(y)\right)=\frac{1}{2}|y|^2$ for all $y\in U$.
We write 
$m^{\la}(dy)=\left(\frac{\la}{2\pi}\right)^{N/2}e^{-\la|y|^2/2}dy$.
By using this coordinate, we have
\begin{align}
 \E^{\la}(F\chi_{0,\kappa},F\chi_{0,\kappa})&=
\int_V|D(F\chi_{0,\kappa})(x)|^2e^{-\la E(x)}Z_{\la}^{-1}dx\nonumber\\
&=\int_U|D(F\chi_{0,\kappa})\left(\Phi(y)\right)|^2
e^{-\frac{\la}{2}|y|^2}Z_{\la}^{-1}|\det(D\Phi(y))|dy\nonumber\\
&=\int_U
|\left\{(D\Phi(y))^{\ast}\right\}^{-1}
D\left\{\left(F\chi_{0,\kappa}\right)(\Phi(y))\right\}|^2
e^{-\frac{\la}{2}|y|^2}Z_{\la}^{-1}|\det(D\Phi(y))|dy.
\end{align}
We may assume that the mappings $y\mapsto \left\{(D\Phi(y))^{\ast}\right\}^{-1}$
and $y\mapsto |\det(D\Phi(y))|$
are Lipschitz continuous.
Let
 $\tilde{\sigma}_1$
be the smallest eigenvalue of
$(D\Phi(0))^{-1}\{(D\Phi(0))^{\ast}\}^{-1}$.
Then
there exists a positive function $\ep(\kappa)$ satisfying
$\lim_{\kappa\to 0}\ep(\kappa)=0$ such that
\begin{align}
\E^{\la}(F\chi_{0,\kappa},F\chi_{0,\kappa})
&\ge
(1-\ep(\kappa))\tilde{\sigma}_1
|\det
 D\Phi(0)|Z_{\la}^{-1}\left(\frac{\la}{2\pi}\right)^{-N/2}
\int_U|D\left\{\left(F\chi_{0,\kappa}\right)(\Phi(y))\right\}|^2
dm^{\la}(y)\nonumber\\
&\ge
(1-\ep(\kappa))
\tilde{\sigma}_1
|\det
 D\Phi(0)|Z_{\la}^{-1}\left(\frac{\la}{2\pi}\right)^{-N/2}\nonumber\\
&\quad \times
\la\left\{
\int_{\RR^N}(F\chio)^2(\Phi(y))dm^{\la}(y)
-\left(\int_{\RR^N}(F\chio)(\Phi(y))dm^{\la}(y)\right)^2
\right\},
\end{align}
where we have used the spectral gap
of the generator of the Dirichlet form
$\int_{\RR^N}|DF(y)|^2dm^{\la}(y)$ is $\la$.
We have
\begin{align}
\lefteqn{|\det
 D\Phi(0)|Z_{\la}^{-1}\left(\frac{\la}{2\pi}\right)^{-N/2}
\int_{\RR^N} 
(F\chio)^2(\Phi(y))dm^{\la}(y)}\nonumber\\
&=
|\det
 D\Phi(0)|Z_{\la}^{-1}\left(\frac{\la}{2\pi}\right)^{-N/2}
\int_U(F\chio)^2(\Phi(y))dm^{\la}(y)\nonumber\\
&=|\det
 D\Phi(0)|Z_{\la}^{-1}\left(\frac{\la}{2\pi}\right)^{-N/2}
\int_V(F\chio)^2(x)
e^{-\la E(x)}\left(\frac{\la}{2\pi}\right)^{N/2}
\left|\det(D(\Phi^{-1})(x))\right|dx\nonumber\\
&\ge(1-\ep(\kappa))
Z_{\la}^{-1}
\int_V(F\chio)^2(x)
e^{-\la E(x)}dx\nonumber\\
&=(1-\ep(\kappa))
Z_{\la}^{-1}
\int_{\RR^N}(F\chio)^2(x)
e^{-\la E(x)}dx
\end{align}
and
\begin{align}
&\int_{\RR^N}
\left(F\chio\right)(\Phi(y))dm^{\la}(y)\nonumber\\
&\quad=
\int_U(F\chio)(\Phi(y))dm^{\la}(y)\nonumber\\
&\quad=
\int_V(F\chio)(x)|\det(D(\Phi^{-1})(x))|
\left(\frac{\la}{2\pi}\right)^{N/2}Z_{\la}d\nu^{\la}(x)\nonumber\\
&\quad=
\int_V(F\chio)(x)
\left(|\det(D(\Phi^{-1})(x))|-|\det(D(\Phi^{-1})(0))|\right)
\left(\frac{\la}{2\pi}\right)^{N/2}Z_{\la}d\nu^{\la}(x)\nonumber\\
&\qquad\quad+|\det(D(\Phi^{-1})(0))|
\int_V(F\chio)(x)
\left(\frac{\la}{2\pi}\right)^{N/2}Z_{\la}d\nu^{\la}(x)\nonumber\\
&\quad=:I_1+I_2.
\end{align}
Here
\begin{align}
 |I_1|&\le
\ep(\kappa)\left(\frac{\la}{2\pi}\right)^{N/2}Z_{\la}
\|F\chio\|_{L^2(\nu^{\la})}
\end{align}
and by the Schwarz inequality,
\begin{align}
 &|I_2|\nonumber\\
&\le |\det(D(\Phi^{-1})(0))|
\left\{
\left|\int_{\RR^N}F(x)d\nu^{\la}(x)\right|+
\left|\int_{\RR^N}
F(y)\left(\chio(x)-1\right)d\nu^{\la}(x)\right|
\right\}\left(\frac{\la}{2\pi}\right)^{N/2}Z_{\la}
\nonumber\\
&\le |\det(D(\Phi^{-1})(0))|\,
\nu^{\la}\left(|x|\ge
 \kappa\right)^{1/2}\left(\frac{\la}{2\pi}\right)^{N/2}Z_{\la}\nonumber\\
&\le |\det(D(\Phi^{-1})(0))|\,
\sqrt{K_{\kappa}}e^{-\la M_{\kappa}/2}\left(\frac{\la}{2\pi}\right)^{N/2}Z_{\la}.
\end{align}
By the definition of $\Phi$, we have
$D^2E(0)=\{(D\Phi(0))^{\ast}\}^{-1}(D\Phi(0))^{-1}$.
Since the set of eigenvalues of
$(D\Phi(0))^{-1}\{(D\Phi(0))^{\ast}\}^{-1}$
and $\{(D\Phi(0))^{\ast}\}^{-1}(D\Phi(0))^{-1}$
are the same, we obtain $\tilde{\sigma}_1=\sigma_1$.
Thus, we get
\begin{align}
 \lefteqn{\E^{\la}(F\chio,F\chio)}\nonumber\\
&\ge
\la(1-\ep(\kappa))\sigma_1\|F\chio\|_{L^2(\nu^{\la})}^2\nonumber\\
&\quad-\la Z_{\la}^2\left(\frac{\la}{2\pi}\right)^{N}
\left\{
\ep(\kappa)
\|F\chio\|_{L^2(\nu^{\la})}+
|\det(D(\Phi^{-1})(0))|\,
\sqrt{K_{\kappa}}e^{-\la M_{\kappa}/2}
\right\}^2.
\label{finite dim 3}
\end{align}
By (\ref{finite dim 1}), (\ref{finite dim 2}), (\ref{finite dim 3})
and $\chi_{0,\kappa}^2(x)+\chi_{1,\kappa}^2(x)=1$ for all
$x$, we complete the proof of the lower bound.
The upper bound $\limsup_{\la\to\infty}\frac{e^{\la}_2}{\la}\le
\sigma_1$ can be proved by a standard way.
Let $v$ be a unit eigenvector such that $D^2E(0)v=\sigma_1 v$.
For this $v$, let
$F^{\la}(x)=\sqrt{\la \sigma_1} (x,v)$.
Then we have
 $\lim_{\la\to\infty}\frac{\E^{\la}(F^{\la},F^{\la})}{\la}=\sigma_1$,
$\lim_{\la\to\infty}\int_{\RR^N}F^{\la}(x)d\nu^{\la}(x)=0$
and $\lim_{\la\to\infty}\|F^{\la}\|_{L^2(\nu^{\la})}=1$
which imply the upper bound.
\end{proof}

\begin{rem}\label{first remark}
 {\rm
(1)~In the estimate of
$\E^{\la}(F\chio,F\chio)$,
we reduce the problem to Gaussian case
with the help of the Morse lemma.
The It\^o map is a measure preserving map between
$P_{x_0}(M)$ with the Brownian motion measure and
the Wiener space.
However, the derivative of the
It\^o map is not a bounded linear operator
between two tangent spaces
(\cite{driver1, cruzeiro-malliavin, elworthy-li1}).
In the study of the asymptotic behavior of the lowest eigenvalue
of a Schr\"odinger operator on $P_{x_0}(M)$ in \cite{aida-semiclassical},
the author reduced the local analysis 
to the analysis in Wiener spaces by using the It\^o map
and a ground state transformation.
At the moment, it is not clear that similar consideration
can be applied to the local analysis in the present problem.
In this paper, instead, we use the COH formula
in Lemma~\ref{coh formula}.

\noindent
(2)
~Let us consider a Dirichlet form
\begin{align}
\E^{A,\la}(F,F)&=\int_{\RR^N}|A(x)DF(x)|^2d\nu^{\la}(x),
\end{align}
where $A(x)$ is an $N\times N$ regular matrix-valued continuous
mapping 
on $\RR^N$ satisfying that
there exists a positive number $C>1$ such that
$C^{-1}|\xi|^2\le (A(x)\xi,\xi)\le C|\xi|^2$
for all $x,\xi$.
Suppose $\E^{A,\la}$ satisfies the above assumption 
(3) and (4).
Then, for the asymptotic behavior of the spectral gap of
$\E^{A,\la}$, the same result as in Theorem~\ref{RN} holds
replacing $\sigma_1$ by the lowest eigenvalue of
the Hessian of $E$ with respect to the Riemannian metric
defined by $g_{A}(x)(\xi,\xi)=|A(x)^{-1}\xi|^2$.
In that proof, we use the continuity of the map
$x\mapsto A(x)$.
In the case of $P_{x_0,y_0}(M)$, 
a local Poincar\'e inequality (\ref{poincare1})
and a log-Sobolev inequality (\ref{LSI loop}) holds.
However the mapping
$\gamma\mapsto A(\gamma)_{\la}$
is not a continuous mapping in the uniform convergence topology
and just a continuous
mapping in the topology of rough paths.
In this sense, we need the result in rough paths.
Moreover, in that case, the operator norm of
$A(\gamma)_{\la}$ is not uniformly bounded in $\gamma$.
Hence the argument is not so simple as in the above case.
Note that $A(\gamma)_{\la}$ depends on $\la$.
Hence we need to estimate $A(\gamma)_{\la}$ for large
$\la$.
In this calculation, we use the short time behavior of
the Hessian of the logarithm of the heat kernel.
}
\end{rem}
\section{Preliminary and Statement of results}\label{statement}

Let $(M,g)$ be an $n$-dimensional complete 
Riemannian manifold.
Let $d(x,y)$ denote the Riemannian distance between
$x$ and $y$.
Let $p(t,x,y)$
be the heat kernel of the diffusion semigroup
$e^{t\Delta/2}$ defined by the Laplace-Bertlami operator $\Delta$.
We refer the readers to \cite{hsu, ikeda-watanabe}
for stochastic analysis on manifolds.
The following assumption is natural for analysis on
Riemannian manifolds.

\begin{assumption}\label{assumption A}
\noindent
$(1)$
~There exist positive constants $C, C'$
such that for any $0<t\le 1$, $x,y\in M$,
\begin{align}
p(t,x,y)&\le Ct^{-n/2}e^{-C'd(x,y)^2/t}.
\end{align}

\noindent
$(2)$ The Ricci curvature of $M$ is bounded,
{\it i.e.}, $\|{\rm Ric}\|_{\infty}<\infty$.
\end{assumption}


The condition (2) implies $\int_Mp(t,x,y)dy=1$ holds for all
$t>0$ and $x\in M$, 
where $dy$ denotes the Riemannian volume.
In second main theorem (Theorem~\ref{main theorem 2}), 
we consider rotationally symmetric
Riemannian metrics.
We prove the above assumption holds true in such a case
by using the following observation in Lemma~\ref{gong-ma}.
Assumption~\ref{assumption A} (1) holds true
if the Ricci curvature is bounded from below and
the volume of small balls have uniform lower bound (\cite{li-yau}).
That is,
there exist $C>0$ and $l_0>0$ such that
${\rm vol}(B_l(x))\ge C l^{n}$ for all $0<l<l_0$ and any 
$x\in M$.
Here ${\rm vol}(B_l(x))$ denotes the volume
of the open metric ball $B_l(x)$ centered at $x$ with radius $l$.

In order to define (pinned) Brownian motion measure,
we assume $M$ satisfies Assumption~\ref{assumption A}.
Let $x_0\in M$.
The probability measure $\nu^{\la}_{x_0}$
on $P_{x_0}(M)$ satisfying the following is called 
the Brownian motion measure starting at $x_0$:

\noindent
For any Borel measurable subsets $A_k\subset M$~$(1\le k\le m)$
and
$0=t_0<t_1<\cdots<t_{m}\le 1$,
\begin{align}
&\nu^{\la}_{x_0}
\left(\{\gamma~|~
\gamma(t_1)\in A_1,\ldots,\gamma(t_m)\in A_m\}\right)
\nonumber\\
&\quad =\int_{M^m}
\prod_{k=1}^{m}p\left((t_{k}-t_{k-1})/\la,x_{k-1},x_k\right)
1_{A_k}(x_k)dx_1\cdots
 dx_{m}.
\end{align}
The process $\gamma(t)$ under $\nu^{\la}_{x_0}$ is a semimartingale.
When $M=\RR^n$, $\gamma(t)$ is the ordinary Brownian 
motion whose covariance matrix is
equal to
$tI/\la$.
Let $\pi : O(M)\to M$ be the orthonormal frame bundle
with the Levi-Civita connection.
We fix a frame $u_0=\{\ep_i\}_{i=1}^n\in \pi^{-1}(x_0)$.
By the mapping $u_0 :\RR^n\to T_{x_0}M$,
we identify $\RR^n$ with $T_{x_0}M$.
Let $\tau(\gamma)_t : T_{x_0}M\to T_{\gamma(t)}M$ 
denote the stochastic parallel translation along
$\gamma$.
For
smooth cylindrical function
$F(\gamma)=f(\gamma(t_1),\ldots,\gamma(t_m))
\in \FC(P_{x_0}(M))$ 
$(0<t_1<\cdots<t_m\le 1)$,
the $H$-derivative $DF(\gamma)$
is defined by
\begin{align}
 DF(\gamma)_t=
\sum_{i=1}^mu_0^{-1}\tau(\gamma)_{t_i}^{-1}(\nabla_i f)(\gamma(t_1),\ldots,\gamma(t_m))
t\wedge t_i,
\end{align}
where
$\nabla_i f$ denotes the derivative of $f$ with respect 
to the $i$-th variable.
Note that $DF(\gamma)\in \H:=H^1([0,1]\to \RR^n~|~h(0)=0)$.
Under Assumption~\ref{assumption A},
the symmetric form
\begin{align}
\E^{\la}(F,F) =
\int_{P_{x_0}(M)}|DF(\gamma)|_{\H}^2d\nu^{\la}_{x_0}(\gamma),
\qquad F\in \FC(P_{x_0}(M))
\end{align}
is closable.
We refer the reader to \cite{driver0, hsu, hsu1} for the closability.
The Dirichlet form of the smallest closed extension is denoted by the same
notation and the 
the generator $-L_{\la}$ is
a natural generalization
of OU operators in Gaussian cases.

We now consider the pinned case.
It is elementary fact that regular conditional probability
(pinned Brownian motion measure)
$\nu^{\la}_{x_0,y}(\cdot)=\nu^{\la}_{x_0}(\cdot~|~\gamma(1)=y)$ 
exists on $P_{x_0,y}(M)$ for 
$p(1,x_0,y)dy$-almost all
$y$.
However, it is necessary for us to define
$\nu^{\la}_{x_0,y}$ for all
$y\in M$ .
Actually, under Assumption~\ref{assumption A} (1) and (2), one can prove
that 
the regular conditional probability
$\nu^{\la}_{x_0,y}$
on $P_{x_0,y}(M)$
exists for all $y\in M$.
This can be checked by using the volume comparison theorem and 
the Kolmogorov criterion
(see \cite{aida-coh, hsu, driver01}).
Moreover, the pinned Brownian motion measure is equivalent to
the Brownian motion measure up to any time $t<1$ with respect to
the natural $\sigma$-field generated by the paths.
This implies that the pinned Brownian motion is a semimartingale
for $t<1$.
Hence the stochatsic parallel translation is well defined
and one can define the $H$-derivative
of a smooth $F(\gamma)=f(\gamma(t_1),\ldots,\gamma(t_m))\in
\FC(P_{x_0,y_0}(M))$ ($t_m<1$) by
$
 D_0F(\gamma)_t=
P_0\left(DF(\gamma)\right)_t,
$
where $P_0$ is the projection operator on
$\H$ onto
the subspace $\Ho:=\{h\in {\rm H}~|~h(1)=0\}$.
Using $D_0$ on $\FC(P_{x_0,y_0(M)})$, we can define
a symmetric bilinear form $\E^{\la}$
similarly to non-pinned case.
However, we need additional assumption on the Riemannian manifold
$M$ to prove the closability since $M$ may be non-compact.
Hence we consider the following assumption.

\begin{assumption}\label{assumption B}
$(\E^{\la},\FC(P_{x_0,y_0}(M)))$ is closable.
 \end{assumption}

We explain the reason why we need additional assumption.
Let
$b(t)=\int_0^tu_0^{-1}\tau(\gamma)_s^{-1}\circ d\gamma(s)$ 
$(0\le t\le 1)$, where $\circ d$ means Stratonovich integral.
The process $b(t)$ is anti-stochastic development of $\gamma(t)$.
Under the law $\nu^{\la}_{x_0}$, $b(t)$ is the ordinary Brownian motion
with variance $1/\la$.
We will discuss $b(t)$ later again in the explanation of the COH formula.
Note that the law of $\{b(t)\}_{0\le t\le 1}$ is singular with respect 
to the Brownian motion measure
under $\nu^{\la}_{x_0,y_0}$.
This is related to the singularity of the pinned Brownian motion itself.
The closability of $\E^{\la}$ can be proved by using the integration
by parts(=IBP) formula for $D, D_0$.
The formula contains
stochastic integrals with respect to
$b(t)$ and the integrability of the stochastic integrals
when $t$ converges to $1$ is the main issue to establish
the formula for the pinned measure.
See \cite{driver01, aida-coh, enchev-stroock1, enchev-stroock2, hsu, gordina}
for this problem.
If either
(i) $M$ is compact, or
(ii) $M$ is diffeomorphic to $\RR^n$ and the metric is flat outside a
      certain bounded set,
holds, by applying the Malliavin's quasi-sure analysis,
we can prove the integrability of the stochastic integrals 
and we obtain the IBP formula and the closability.
Also, under the condition,
\begin{align}
& \mbox{There exists a positive constant $C$ such that for any $0<t\le 1$ and
$z\in M$},\nonumber\\
&
|\nabla_z\log p(t,y_0,z)|\le C\frac{d(y_0,z)}{t}+\frac{C}{\sqrt{t}},
\end{align}
the IBP formula and the closability hold.
This inequality holds for any compact Riemannian manifolds
(\cite{hsu}). 
For rotationally symmetric Riemannian manifolds,
we will give a sufficient condition for this.
See Assumption~\ref{assumption C} and
Lemma~\ref{gong-ma} (2).

We now define 
a Dirichlet Laplacian
on a certain domain ${\cal D}$ 
in $P_{x_0,y_0}(M)$.

\begin{dfi}
Let $l$ be a positive number with $l>d(x_0,y_0)$.
let $B_l(y_0)$ denote the open ball centered at $y_0$
with radius $l$.
Define
\begin{align}
 {\cal D}_l&=\{\gamma\in P_{x_0,y_0}(M)~|~
\gamma(t)\in B_l(y_0)~~\mbox{for all $0\le t\le1$}
\}.
\end{align}
For $l=+\infty$, we set
${\cal D}_{\infty}=P_{x_0,y_0}(M)$.
 \end{dfi}

We may omit the subscript $l$ for simplicity.
In order to define the $H^1$-Sobolev spaces,
we assume Assumption~\ref{assumption B} for the moment.
Let $H^{1,2}(P_{x_0,y_0}(M),\nu^{\la}_{x_0,y_0})$ denote the $H^1$-Sobolev space
which is the closure of $\FC(P_{x_0,y_0}(M))$ with respect to
the norm
$\|F\|_{H^1}^2=\|F\|^2_{L^2(\nu^{\la}_{x_0,y_0})}+\E^{\la}(F,F)$.
Let
\begin{align}
H^{1,2}_0({\cal D},\nu^{\la}_{x_0,y_0})=
\left\{F\in H^{1,2}(P_{x_0,y_0}(M),\nu^{\la}_{x_0,y_0})~|~
\mbox{$F=0$~$\nu^{\la}_{x_0,y_0}$-a.s. outside ${\cal D}$}
\right\}
\end{align}
which is a closed linear subspace of
$H^{1,2}(P_{x_0,y_0}(M),\nu^{\la}_{x_0,y_0})$.

The non-positive generator $L_{\la}$ corresponding to the 
densely defined closed form 
$$
\E^{\la}(F,F),~~F\in H^{1,2}_0({\mathcal D},{\nu}^{\la}_{x_0,y_0})
$$
in the Hilbert space
$L^2({\mathcal D},{\nu}^{\la}_{x_0,y_0})$
is the Dirichlet Laplacian on 
${\mathcal D}$.
Let 
\begin{align}
e^{\la}_{Dir,1,{\mathcal D}}&=
\inf_{F(\ne 0)\in H^{1,2}_0\left({\mathcal D}\right)}
\frac{\int_{{\mathcal D}}|D_0F|^2d{\nu}^{\la}_{x_0,y_0}}
{\|F\|_{L^2(\nu^{\la}_{x_0,y_0})}^2}.
\end{align}
This is equal to $\inf\sigma(-L_{\la})$, where
$\sigma(-L_{\la})$ denotes the spectral set of $-L_{\la}$.
We next introduce 
\begin{align}
&\dirichlet\nonumber\\
&=
\sup_{G(\ne 0)\in L^2(\nu^{\la}_{x_0,y_0})}
\inf\Biggl\{
\frac{\int_{{\mathcal D}}|D_0F|^2d{\nu}^{\la}_{x_0,y_0}}
{\|F\|_{L^2({\nu}^{\la}_{x_0,y_0})}^2}~\Bigg |~
F\in H^{1,2}_0\left({\mathcal D}\right), 
\quad (F,G)_{L^2({\nu}^{\la}_{x_0,y_0})}=0\Biggr\}.
\end{align}
This is the generalized second lowest eigenvalue of
$-L_{\la}$.
When $l=+\infty$, 
$\ldirichlet=0$ and
$\dirichlet$ is equal to the spectral gap
of $-L_{\la}$ on the whole space
$P_{x_0,y_0}(M)$.
We use the notations
$e^{\la}_{1}$ and $e^{\la}_2$
instead of $\ldirichlet$ and $\dirichlet$
respectively in this case.
To state our first main theorem,
let us define the energy of $H^1$ path $\gamma$ belonging
to
$P_{x_0,y_0}(M)$,
\begin{align}
E(\gamma)=\frac{1}{2}\int_0^1|\gamma'(t)|^2_{T_{\gamma(t)}M}\,dt.
\label{energy function}
\end{align}
We use the same notation $D_0$ for the derivative
of the smooth function on the Hilbert manifold of the $H^1$ subset of
$P_{x_0,y_0}(M)$.
Note that $D^2_0E(\cxy)$ is a symmetric bounded linear operator
on $\Ho$.
See Lemma~\ref{S and T1} for the explicit form.
The following is our first main theorem.

\begin{thm}\label{main theorem 1}
Assume $M$ satisfies Assumptions~{\rm \ref{assumption A}},
{\rm \ref{assumption B}}.
Let $0<l<\infty$.
Assume that $l$ satisfies the
following.

\noindent
$(1)$ $l$ is smaller than the injectivity radius
at $y_0$.
In particular, 
there are no intersection of the closure of
$B_l(y_0)$ and ${\rm Cut}(y_0)$,
where ${\rm Cut}(y_0)$ denotes the cut-locus
of $y_0$.

\noindent
$(2)$
The Hessian of $k(z)=\frac{1}{2}d(z,y_0)^2$
satisfies that
$
\inf_{z\in B_{l}(y_0)}\nabla^2k(z)>1/2.
$

Then we have
\begin{align}
\lim_{\la\to\infty}
\frac{\dirichlet}{\la}=\sigma_1,\label{main theorem 1 identity}
\end{align}
where
$\sigma_1=\inf\sigma((D_0^2 E)(\cxy))$.
\end{thm}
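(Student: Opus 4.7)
My approach is to mirror the finite-dimensional argument in Theorem~\ref{RN}, carrying out an IMS localization around the minimal geodesic $\cxy$, but replacing the Morse-lemma coordinate change (which is unavailable on loop space because the It\^o map fails to have a bounded derivative, cf.\ Remark~\ref{first remark}) by a direct analysis based on the COH formula of Lemma~\ref{coh formula} and the rough path estimates of Lemma~\ref{lemma from rough path}.

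For the upper bound $\limsup_{\la\to\infty}\dirichlet/\la\le \sigma_1$, the plan is to use the representation $(\ref{representation of e2})$ of $\dirichlet$ via the ground state $\Psi_\la$ of the Dirichlet Laplacian on ${\cal D}$, which expresses the generalized second eigenvalue as an infimum of Rayleigh quotients over test functions orthogonal to $\Psi_\la$ in $L^2(\nu^\la_{x_0,y_0})$. I would take a unit eigenvector $v\in\Ho$ of $D_0^2 E(\cxy)$ with eigenvalue $\sigma_1$ (using Lemma~\ref{S and T1} for the explicit form of the Hessian) and build a trial function $F_\la(\gamma)=\sqrt{\la\sigma_1}\,\Phi_v(\gamma)\,\zeta_\la(\gamma)$, where $\Phi_v$ is a coordinate along $v$ read off in the parallel frame along $\cxy$ (analogous to $(x,v)$ in Theorem~\ref{RN}) and $\zeta_\la$ is a correction enforcing the orthogonality to $\Psi_\la$ and support in ${\cal D}$. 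Using the concentration of $\Psi_\la^2\,d\nu^\la_{x_0,y_0}$ near $\cxy$ as $\la\to\infty$ together with the second-order expansion of $E$ at $\cxy$, the Rayleigh quotient should converge to $\sigma_1\la$.

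For the lower bound, I would fix a small $\kappa>0$ and introduce cutoffs $\chi_{0,\kappa}(\gamma)$ and $\chi_{1,\kappa}(\gamma)=\sqrt{1-\chi_{0,\kappa}^2(\gamma)}$ based on a rough-path distance of $\gamma$ from $\cxy$, and then apply the IMS localization identity as in $(\ref{finite dim 1})$. The far-field contribution $\E^\la(F\chi_{1,\kappa},F\chi_{1,\kappa})$ is handled exactly as in Theorem~\ref{RN}: combining the loop-space log-Sobolev inequality (\ref{LSI loop}) with a Gaussian-type tail estimate for the distance between $\gamma$ and $\cxy$ under $\nu^\la_{x_0,y_0}$ yields a lower bound of order $\delta'\la^2$, which swamps $\sigma_1\la$ for large $\la$. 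For the near-field contribution $\E^\la(F\chi_{0,\kappa},F\chi_{0,\kappa})$, I would apply the COH formula of Lemma~\ref{coh formula} to $F\chi_{0,\kappa}$ after subtracting its mean, representing it as a stochastic integral of a conditional expectation of $A(\gamma)_\la D_0(F\chi_{0,\kappa})$; the Section~4 identification of $A(\cxy)_\infty$ with an operator determined by $D_0^2 E(\cxy)$, together with an approximation $A(\gamma)_\la\approx A(\cxy)_\infty$ on the support of $\chi_{0,\kappa}$, should then give a Poincar\'e-type bound whose sharp constant tends to $\sigma_1\la$ as $\la\to\infty$ and $\kappa\to 0$.

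The main obstacle is precisely this last approximation step. Since $A(\gamma)_\la$ is built from It\^o SDEs driven by the anti-development $b$ and involves the short-time Hessian $\nabla_z^2\log p(t/\la,y_0,z)$, the map $\gamma\mapsto A(\gamma)_\la$ is \emph{not} continuous in the uniform topology, so closeness of $\gamma$ to $\cxy$ in sup norm does not entail closeness of $A(\gamma)_\la$ to $A(\cxy)_\infty$. Continuity holds only in rough path topology, which forces $\chi_{0,\kappa}$ to be supported in a rough-path neighbourhood of $\cxy$ and requires the quantitative estimates of Lemma~\ref{lemma from rough path}; one must also use that the pinned Brownian measure assigns zero mass to slim sets so that Brownian rough paths are defined $\nu^\la_{x_0,y_0}$-almost surely. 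A second difficulty is that $\|A(\gamma)_\la\|$ is not uniformly bounded in $\la$, so the short-time asymptotics (\ref{loghessian 0}) and Lemma~\ref{gong-ma} must be invoked to obtain $\la$-uniform control on the support of $\chi_{0,\kappa}$. Once these rough-path and heat-kernel estimates are in hand, assembling the near- and far-field contributions and letting $\kappa\to 0$ yields $\liminf_{\la\to\infty}\dirichlet/\la\ge\sigma_1$ and completes the proof of $(\ref{main theorem 1 identity})$.
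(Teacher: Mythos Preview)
Your lower-bound plan is essentially the paper's: IMS localization with rough-path cutoffs $\eta_{1,\kappa},\eta_{2,\kappa}$, the far-field bound via the log-Sobolev inequality plus the large-deviation tail estimate~(\ref{rough path exponential decay}), and the near-field Poincar\'e bound via the COH formula together with $A(\gamma)_\la\approx (S^{-1})^\ast$ from Lemma~\ref{perturbation of M} and Lemma~\ref{lemma from rough path}.  Two small corrections: on the bounded domain ${\cal D}$ one uses the bounded-coefficient log-Sobolev inequality~(\ref{lsi1}), not~(\ref{LSI loop}), and by Lemma~\ref{coh formula}\,(1)(ii) the norm $\|A(\gamma)_\la\|_{op}$ \emph{is} uniformly bounded for $\gamma\in{\cal D}$ and $\la\ge\la_\ast$; the unboundedness you mention belongs to the setting of Theorem~\ref{main theorem 2}.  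Also, the paper exploits the sup--inf definition of $\dirichlet$ by choosing $G=\eta_{1,\kappa}$, so that $(F,\eta_{1,\kappa})=0$ and the mean of $F\eta_{1,\kappa}$ vanishes exactly.

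Your upper-bound plan, however, has two genuine gaps.  First, you take a unit eigenvector $v\in\Ho$ for the eigenvalue $\sigma_1$ of $D_0^2E(\cxy)$; but as the remark after Theorem~\ref{main theorem 2} points out, if the curvature along $\cxy$ is strictly negative then $\sigma_1=1$ lies in the essential spectrum and is not an eigenvalue.  The paper instead fixes $\ep>0$ and takes $\varphi_\ep\in\Ltwoo\cap C^1$ with $\sigma_1\le\|S\varphi_\ep\|^2\le\|(I+T)\varphi_\ep\|\le\sigma_1+\ep$.  Second, and more seriously, your mechanism for evaluating the Rayleigh quotient---``concentration of $\Psi_\la^2\,d\nu^\la_{x_0,y_0}$ near $\cxy$ together with the second-order expansion of $E$''---is precisely the Laplace-method argument that Remark~\ref{first remark} warns does not transplant to loop space: $E$ is not defined on $P_{x_0,y_0}(M)$ and no bounded local chart to a Gaussian model is available.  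The paper's substitute is to take the trial function as the stochastic integral $F_\ep(\gamma)=\sqrt{\la}\bigl(\int_0^1(\varphi_\ep,db)-\int_0^1(\varphi_\ep,\xi)\,dt\bigr)$ cut off by $\eta_{1,\kappa}$, compute $D_0\tilde F_\ep'\approx\sqrt{\la}(I+T)\varphi_\ep$ directly via~(\ref{D0b}) to get the numerator $\approx\la\|(I+T)\varphi_\ep\|^2$, and then---this is the point you are missing---use the COH formula \emph{again} to get the denominator: since $A(\gamma)_\la D_0\tilde F_\ep'\approx (S^{-1})^\ast\sqrt{\la}(I+T)\varphi_\ep=\sqrt{\la}\,S\varphi_\ep$ is non-random (Proposition~\ref{S and T2}), the It\^o isometry gives $\|\tilde F_\ep-E[\tilde F_\ep]\|^2\gtrsim\|S\varphi_\ep\|^2\ge\sigma_1$, whence the ratio is $\lesssim\la(\sigma_1+\ep)^2/\sigma_1\to\la\sigma_1$.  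Without this second use of the COH formula and the algebraic identity $(S^{-1})^\ast(I+T)=S$ you have no way to compute the $L^2$-norm of the trial function.
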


Since $\nabla_z^2k(z)|_{z=y_0}=I_{T_{y_0}M}$,
the above conditions (1), (2) hold true for small
$l$.
Also, if $M$ is negatively curved manifold,
the condition (2) holds for all $l$.
We need condition (2)
to prove 
a COH formula by applying Lemma 3.1 in \cite{aida-coh}
although this may be just a technical condition.
Under the above condition, clearly 
the minimal geodesic $\cxy=\cxy(t)$~
$(0\le t\le 1)$~$(\cxy(0)=x_0, \cxy(1)=y_0)$
belongs to ${\mathcal D}$.
Further, 
$\lim_{\la\to\infty}\nu^{\la}_{x_0,y_0}({\cal D})=1$
holds true by a large deviation result
(see Section 5).

For a certain class of Riemannian manifolds $M$,
the same result holds for $P_{x_0,y_0}(M)$.
It is the second main theorem.
Let $M$ be a Riemannain manifold with a pole $y_0$.
That is, the exponential map
$\exp_{y_0} : T_{y_0}M\to M$ is a diffeomorphism.
We pick an
orthonormal frame $\tilde{u}_0$ of
$T_{y_0}M$.
Let $S^{n-1}$ be the unit sphere centered at the origin
in $\RR^n$.
We identify $\RR^n\setminus \{0\}$ with $(0,+\infty)\times S^{n-1}$ by
$(r,\Theta)(\in (0,+\infty)\times S^{n-1})
\mapsto r\Theta\in (\RR^n\setminus \{0\})$.
Let us define $\Psi : (0,+\infty)\times S^{n-1}\to M$ 
by
$x=\Psi(r,\Theta)=\exp_{y_0}\left(\tilde{u}_0(r\Theta)\right)$.
Then $r=d(y_0,x)$ holds.
The Riemannian metric $g$ is called rotationally symmetric at $y_0$
if the pull back of $g$ by $\Psi$ can be expressed as
\begin{align}
\Psi^{\ast}g=
dr^2+f(r)^2d\Theta^2, \label{rs metric}
\end{align}
$d\Theta^2$ denotes the standard Riemannian metric on the sphere.
Note that if $g$ is a smooth Riemannian metric on 
$M$, $f(r)$ is a $C^{\infty}$ function on $[0,\infty)$
satisfying $f(0)=0$ and $f'(0)=1$.
We consider the following assumption on $f$.

\begin{assumption}\label{assumption C}
Let $\varphi(r)=\log \frac{f(r)}{r}$.
The function $\varphi$ satisfies the following.

\noindent
$(1)$~$\varphi$ is a $C^{\infty}$ function on $[0,\infty)$.
The $k$-th derivative $\varphi^{(k)}(r)$ is
bounded function on $[0,\infty)$ for all $1\le k\le 4$.

\noindent
$(2)$~
There exists a $C^{\infty}$ function $\phi$ on $[0,\infty)$ 
such that
$\varphi(r)=\phi(r^2)$.

\noindent
$(3)$~$\inf_{r>0}r\varphi'(r)>-\frac{1}{2}$.
\end{assumption}

By Lemma A.2 in \cite{chow},
it is easy to deduce that
for any smooth function $f$ on $[0,\infty)$
satisfying $f(0)=0, f'(0)=1$ and
Assumption~\ref{assumption C} (2),
the Riemannian metric $dr^2+f(r)^2d\Theta^2$ on 
$\RR^n\setminus \{0\}$ can be extended
to a smooth Riemannian metric on $\RR^n$.
The above condition on $\varphi$ appeared in \cite{aida-precise}.
In \cite{aida-precise}, we assume all derivatives
$\varphi^{(k)}$ are bounded.
However we see that it is enough to assume the boundedness for
$1\le k\le 4$ by checking the calculations there.
We give examples of $\varphi$ which satisfies the above assumption.

\begin{exm}{\rm
For the hyperbolic space with the sectional curvature
$K=-a$, 
$\varphi_a(r)=\log\frac{\sinh \sqrt{a}r}{\sqrt{a}r}$.
This 
satisfies Assumption~$\ref{assumption C}$.
Actually $\varphi_a'(r)\ge 0$ for all $r$.
Clearly, small perturbations of $\varphi_a(r)$
satisfy the assumption.
Also if $\varphi_i$~$(1\le i\le n)$ satisfy 
the assumption, then 
so do the function $\sum_{i=1}^np_i\varphi_i$
for any positive numbers
$\{p_i\}$ with $\sum_{i=1}^np_i=1$.}
\end{exm}

The function $f$ satisfies the Jacobi equation
$f''(r)+K(r)f(r)=0$, where $K$ is the radial curvature
function.
It is natural to put the assumptions on $K$ instead of $f$.
In fact, it is proved in \cite{sasamori}
that necessary all estimates for the validity of
our second main theorem (Theorem~\ref{main theorem 2})
hold true under some assumptions on $K$.
Further related work is in progress.

The quantity $r\varphi'(r)$ is related to
the second derivative of the squared distance function
as in the following lemma
(\cite{greene-wu, aida-precise}).

\begin{lem}\label{hessian of d2}
For $r=d(y_0,z)$,
we have
\begin{align}
 \nabla^2_z\left(\frac{r^2}{2}\right)
&=I_{T_zM}+r\varphi'(r)P_z^{\perp},
\end{align}
where 
$v_z\in T_zM$ is the element such that
$\exp_z(v_z)=y_0$ and
$P_z^{\perp}$ denotes the orthogonal projection
onto the orthogonal complement of the $1$ dimensional
subspace spanned by $v_z\in T_zM$.
\end{lem}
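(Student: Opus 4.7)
The plan is to work in the polar coordinate chart provided by $\Psi^{-1}$, where the metric takes the warped product form $dr^2 + f(r)^2 d\Theta^2$, and then compute the covariant Hessian of $k(z) = r^2/2$ directly.

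First I would compute the gradient. Since $r$ is the distance function from $y_0$ and $\partial_r$ is a unit vector field orthogonal to the spheres $\{r=\text{const}\}$, we have $\grad k = r\,\partial_r$. Note that $-\partial_r$ at $z$ coincides with the unit vector pointing to $y_0$ along the radial geodesic, so $v_z = -r\,\partial_r$ and the orthogonal projection $P_z^\perp$ is precisely the projection onto the subspace of $T_zM$ tangent to the sphere of radius $r$.

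Next I would decompose an arbitrary tangent vector $X\in T_zM$ as $X = X^\parallel + X^\perp$ with $X^\parallel$ a multiple of $\partial_r$ and $X^\perp$ tangent to the sphere, and compute $\nabla_X(r\,\partial_r)$ using the two standard facts for a warped product with radial factor:
\begin{align}
\nabla_{\partial_r}\partial_r = 0, \qquad \nabla_X \partial_r = \frac{f'(r)}{f(r)}\,X \quad \text{for $X$ tangent to the sphere.}
\end{align}
The radial geodesic equation gives the first identity, and the second follows from the Koszul formula applied to $g = dr^2 + f(r)^2 d\Theta^2$ (the only nonzero Christoffel symbols contributing couple $X$ to $\partial_r$ via the derivative of $f^2$). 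Hence
\begin{align}
\nabla^2 k\, (X) = \nabla_X(r\partial_r) = X(r)\,\partial_r + r\,\nabla_X\partial_r = X^\parallel + \frac{r f'(r)}{f(r)}\,X^\perp.
\end{align}

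Finally, from $\varphi(r) = \log f(r) - \log r$ one gets $r\varphi'(r) = \frac{r f'(r)}{f(r)} - 1$, so
\begin{align}
\nabla^2 k\,(X) = X^\parallel + \bigl(1 + r\varphi'(r)\bigr) X^\perp = X + r\varphi'(r)\, P_z^\perp X,
\end{align}
which is the stated identity. The only nontrivial step is the computation of $\nabla_X\partial_r$ on a warped product, which is standard; the rest is bookkeeping between the coordinate description and the intrinsic formulation in terms of $v_z$ and $P_z^\perp$. One small subtlety worth checking is smoothness/validity at $z = y_0$, but the statement is only used for $z$ with $r = d(y_0,z) > 0$, so the polar chart argument suffices.
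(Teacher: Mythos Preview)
Your argument is correct. The paper does not actually prove this lemma; it simply states the result and cites \cite{greene-wu, aida-precise}, so there is no ``paper's own proof'' to compare against beyond the references. Your computation via the warped product connection formulas $\nabla_{\partial_r}\partial_r=0$ and $\nabla_X\partial_r=\frac{f'(r)}{f(r)}X$ for $X$ tangent to the distance sphere is exactly the standard derivation one finds in those sources, and the algebraic identification $rf'(r)/f(r)=1+r\varphi'(r)$ is immediate from the definition of $\varphi$.
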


By this lemma, we see that Assumption~\ref{assumption C} (3)
implies the condition (2) in Theorem~\ref{main theorem 1}
with $l=+\infty$.

\begin{lem}\label{sufficient condition for assumption A}
Suppose $f$ satisfies Assumption~{\rm \ref{assumption C}} $(1), (2)$ 
and $\inf_{r>1}f(r)>0$.
Then 
Assumptions~{\rm \ref{assumption A}}, {\rm \ref{assumption
 B}}
hold.
\end{lem}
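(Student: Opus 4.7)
The plan is to verify Assumption~\ref{assumption A}(2), Assumption~\ref{assumption A}(1), and Assumption~\ref{assumption B} in this order, treating each as a separate ingredient.

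For Assumption~\ref{assumption A}(2) I would compute the curvatures directly from the rotationally symmetric form $\Psi^{\ast}g = dr^2 + f(r)^2 d\Theta^2$. The radial and tangential sectional curvatures are $K_{\rm rad}(r)=-f''(r)/f(r)$ and $K_{\rm tan}(r)=(1-f'(r)^2)/f(r)^2$. Substituting $f(r) = re^{\varphi(r)}$ produces expressions polynomial in $\varphi, \varphi', \varphi''$ together with the factors $\varphi'(r)/r$ and $1/f(r)^2$. Assumption~\ref{assumption C}(1) bounds $\varphi'$ and $\varphi''$; Assumption~\ref{assumption C}(2) yields $\varphi'(r)/r = 2\phi'(r^2)$, which is smooth at $r=0$, while the bound $\|\varphi'\|_{\infty}<\infty$ controls this quotient for large $r$. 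The apparent singularity of $K_{\rm tan}$ at the pole is removed by expanding $\varphi(r)=\phi(r^2)=O(r^2)$, so that the numerator vanishes to order $r^2$. For large $r$ the remaining term $1/f(r)^2$ is bounded thanks to $\inf_{r>1}f(r)>0$. This gives globally bounded sectional curvature, whence bounded Ricci.

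For Assumption~\ref{assumption A}(1) I would invoke the Li--Yau Gaussian upper bound, which under the already-established bounded Ricci reduces the task to producing a uniform lower bound ${\rm vol}(B_l(x))\ge c\, l^n$ for $0<l\le l_0$ and all $x\in M$. With bounded sectional curvature, Cheeger's injectivity-radius lemma reduces this in turn to securing $\inf_{x\in M}{\rm vol}(B_1(x))>0$. Using the polar chart around $y_0$ and writing $r_0=d(y_0,x)$, the unit ball $B_1(x)$ contains a tube around the radial geodesic through $x$ whose volume is at least a constant multiple of $\int_{r_0-1}^{r_0+1}(f(s)/f(r_0))^{n-1}\,ds$. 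Since $f'/f = 1/r+\varphi'(r)$ is bounded on $[1,\infty)$, the ratio $f(s)/f(r_0)$ stays bounded above and below on that interval uniformly in $r_0$; for small $r_0$ the ball is handled by direct comparison near the pole (where the metric is essentially Euclidean). Feeding the resulting uniform volume estimate into Li--Yau yields the Gaussian upper bound.

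For Assumption~\ref{assumption B} I would cite Lemma~\ref{gong-ma}(2), which under Assumption~\ref{assumption C} is designed to produce the heat-kernel logarithmic derivative bound $|\nabla_z\log p(t,y_0,z)|\le C d(y_0,z)/t + C/\sqrt{t}$ for $0<t\le 1$. This is precisely the sufficient condition recorded just after Assumption~\ref{assumption B} for the IBP formula on $P_{x_0,y_0}(M)$, and hence for the closability of $(\E^{\la},\FC(P_{x_0,y_0}(M)))$. The main obstacle will be the middle step: producing a uniform volume lower bound on unit balls centred at arbitrary points. Nothing in Assumption~\ref{assumption C} alone speaks directly to the behaviour of $f$ at infinity, so it is precisely the additional hypothesis $\inf_{r>1}f(r)>0$ that rules out degeneration and makes the tube argument go through. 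The curvature bookkeeping for Assumption~\ref{assumption A}(2) is routine once the smoothness condition $\varphi=\phi\circ r^2$ is exploited at the pole, and the closability part is a direct quotation of Lemma~\ref{gong-ma}(2) combined with the text's closability criterion.
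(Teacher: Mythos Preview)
Your proposal is correct and matches the paper's proof: bounded curvature from the explicit warped-product formulas (the paper cites \cite{chow}, Lemma~1.21), the Li--Yau Gaussian bound via a uniform small-ball volume lower bound coming from the ratio control on $f$ (equivalently your observation that $f'/f=1/r+\varphi'$ is bounded on $[1,\infty)$, together with $\inf_{r>1}f(r)>0$), and closability from the gradient estimate~(\ref{loggrad}) in Lemma~\ref{gong-ma}(2). The only difference is your detour through Cheeger's injectivity-radius lemma to pass from a unit-ball volume bound to the small-ball estimate; this is valid but unnecessary, since the same tube computation scaled to radius $l$ gives $\mathrm{vol}(B_l(x))\ge cl^n$ directly, which is how the paper proceeds.
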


\begin{proof}
By Lemma 1.21 in \cite{chow} (see also Proposition 9.106 in
 \cite{besse}),
it is easy to see the boundedness of the Ricci curvature
under the Assumption~\ref{assumption C} (1), (2).
To prove the Gaussian upper bound in Assumption~\ref{assumption A} (1),
it suffices to prove that there exists $C>0$ such that
$\inf_{x\in M}{\rm vol}(B_l(x))\ge Cl^n$ for small $l>0$
because the Ricci curvature is bounded.
Also under the assumption $\|\varphi'\|_{\infty}<\infty$,
we obtain there exist positive constants $C(\ep,R)$ and
$c(\ep,R)$ for any $\ep>0$ and $R>0$ such that
\begin{align}
c(\ep,R)\le \frac{f(r')}{f(r)}\le
C(\ep,R)
\qquad \mbox{for any $r, r'$ with
$r,r'\ge R, |r-r'|\le \ep$}
\end{align}
and
$\lim_{\ep\to 0}c(\ep,R)=\lim_{\ep\to 0}C(\ep,R)=1$.
By using this and $\inf_{r\ge 1}f(r)>0$,
it is not difficult to show the uniform lower boundedness of the volume
by this estimate.
Assumption~\ref{assumption B} follows from 
the estimate of $\nabla_z\log p(t,y_0,z)$
in (\ref{loggrad}).
\end{proof}

Actually, (1.58) in \cite{chow} implies that
the sectional curvature is bounded
under Assumption~\ref{assumption C}.
Hence, we may use comparison theorem of heat kernels
to prove the Gaussian upper bound.
We refer the reader to \cite{hsu} for the comparison theorem.
Also we note that $\inf_{r>0} r\varphi'(r)>-1$ implies
$\inf_{r>1}f(r)>0$.

The following is
our second main theorem.
We prove the positivity of $e^{\la}_2$ in
more general setting in
Theorem~\ref{main theorem 3}.

\begin{thm}\label{main theorem 2}
Let $M$ be a rotationally symmetric Riemannian manifold
with a pole $y_0$.
Suppose $f$ in $(\ref{rs metric})$ satisfies
Assumption~{\rm \ref{assumption C}}.
Then $e_2^{\la}>0$ holds for all $\la>0$ and
\begin{align}
 \lim_{\la\to\infty}\frac{e_2^{\la}}{\la}=\sigma_1,\label{asymptotics of ela2}
\end{align}
where $\sigma_1$ is the same number as in
Theorem~$\ref{main theorem 1}$.
\end{thm}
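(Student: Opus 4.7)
\medskip
\noindent\textbf{Proof plan.} The positivity $e^{\la}_2>0$ for every finite $\la$ will be supplied by Theorem~\ref{main theorem 3}, whose hypotheses hold here: Lemma~\ref{sufficient condition for assumption A} provides Assumptions~\ref{assumption A} and \ref{assumption B}, and the COH formula of Lemma~\ref{coh formula} delivers the log-Sobolev inequality that is the main input. The substantive content of the theorem is therefore (\ref{asymptotics of ela2}), and my plan is to derive it by combining Theorem~\ref{main theorem 1} applied on a bounded neighborhood ${\cal D}_l$ of the geodesic $\cxy$ with an IMS-type localization on $P_{x_0,y_0}(M)$, in direct analogy with the $\RR^N$ proof of Theorem~\ref{RN}. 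The extra analytic input needed is a large-deviation tail bound of the form $\nu^{\la}_{x_0,y_0}({\cal D}_l^c)\le K_l e^{-\la M_l}$ for $\la\ge 1$, available for every $l>d(x_0,y_0)$ that also satisfies the hypotheses of Theorem~\ref{main theorem 1}; such $l$ exist, since condition~(2) there holds automatically for small $l$ because $\nabla_z^2 k(z)|_{z=y_0}=I$.

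\medskip
\noindent\textbf{Upper bound.} Fix a unit eigenvector $h\in\Ho$ of $D_0^2E(\cxy)$ associated with $\sigma_1$. Build a smooth cylindrical trial function $F^{\la}$ which, near $\cxy$ and multiplied by a cutoff supported in ${\cal D}_l$, agrees with $\sqrt{\la\sigma_1}$ times the $\Ho$-linear coordinate along $h$ in a local chart at $\cxy$ (e.g.\ obtained by stochastic-parallel-translating the inverse exponential at $\cxy(t)$ back to $T_{x_0}M$). A Taylor expansion of $E$ at $\cxy$, together with the concentration of $\nu^{\la}_{x_0,y_0}$ near $\cxy$ and the tail bound, gives $\E^{\la}(F^{\la},F^{\la})/\la\to\sigma_1$, $\int F^{\la}\,d\nu^{\la}_{x_0,y_0}\to 0$, and $\|F^{\la}\|_{L^2}\to 1$, hence $\limsup_{\la\to\infty} e^{\la}_2/\la\le\sigma_1$.

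\medskip
\noindent\textbf{Lower bound via IMS.} Let $F\in H^{1,2}(P_{x_0,y_0}(M),\nu^{\la}_{x_0,y_0})$ satisfy $\|F\|_{L^2}=1$ and $\int F\,d\nu^{\la}_{x_0,y_0}=0$. Take cutoffs $\chio$ supported in ${\cal D}_l$ and $\chi_{1,\kappa}=\sqrt{1-\chio^2}$ constructed from a mollification of $\sup_t d(\gamma(t),y_0)$, so that $|D_0\chi_{i,\kappa}|\le C/\kappa$, and apply the IMS identity
\[
\E^{\la}(F,F)=\E^{\la}(F\chio,F\chio)+\E^{\la}(F\chi_{1,\kappa},F\chi_{1,\kappa})-\int\bigl(|D_0\chio|^2+|D_0\chi_{1,\kappa}|^2\bigr)F^2\,d\nu^{\la}_{x_0,y_0}.
\]
For the inside term $F\chio\in H^{1,2}_0({\cal D}_l)$: it need not be orthogonal to the Dirichlet ground state $\Psi_\la$, but the tail bound forces $\ldirichlet$ to be exponentially small and $\Psi_\la$ to be $L^2$-close to a constant on ${\cal D}_l$, so the constraint $\int F\,d\nu^{\la}_{x_0,y_0}=0$ makes $|(F\chio,\Psi_\la)_{L^2}|$ exponentially small. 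Combined with Theorem~\ref{main theorem 1}, this gives $\E^{\la}(F\chio,F\chio)\ge (1-o(1))\la\sigma_1\|F\chio\|_{L^2}^2$. For the outside term, the loop-space log-Sobolev inequality in Herbst-potential form (the analogue of (\ref{GNS0})) together with the tail estimate should give $\E^{\la}(F\chi_{1,\kappa},F\chi_{1,\kappa})\ge \delta'\la^2\|F\chi_{1,\kappa}\|_{L^2}^2$ for some $\delta'>0$ and all large $\la$. Sending $\la\to\infty$ and then $\kappa\to 0$ yields $\liminf_{\la\to\infty} e^{\la}_2/\la\ge\sigma_1$.

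\medskip
\noindent\textbf{Main obstacle.} The delicate step is the outside estimate. The coefficient $A(\gamma)_{\la}$ in the loop-space log-Sobolev inequality is unbounded because it carries $\nabla_z^2\log p(t/\la,y_0,z)$, and $\gamma\mapsto A(\gamma)_{\la}$ is continuous only in the topology of rough paths (cf.\ Remark~\ref{first remark}(2)). Unlike the $\RR^N$ case of Theorem~\ref{RN}, one cannot simply dominate $|A(\gamma)_{\la} D_0 F|^2\le C|D_0 F|^2$; instead, the Herbst argument must be localized to the set where $\|A(\gamma)_{\la}\|$ is of controlled size, with the bad set absorbed into the exponential tail. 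Controlling $A(\gamma)_{\la}$ uniformly for large $\la$ requires the short-time Hessian asymptotics of Lemma~\ref{gong-ma} and Brownian rough-path estimates off a slim set, exactly the machinery assembled for the proof of Theorem~\ref{main theorem 1}.
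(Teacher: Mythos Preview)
Your global architecture---positivity from Theorem~\ref{main theorem 3}, asymptotics by an IMS localization with the near-geodesic piece controlled through Theorem~\ref{main theorem 1}---is the paper's, and your idea of invoking Theorem~\ref{main theorem 1} directly on $F\chio$ (using Lemma~\ref{ground state} to show $(F\chio,\Psi_\la)$ is exponentially small) is a clean shortcut; the paper instead re-runs the local COH argument. Two steps, however, do not go through as written.

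\emph{Upper bound.} You fix a unit \emph{eigenvector} $h$ of $D_0^2E(\cxy)$ at $\sigma_1$, but $\sigma_1$ need not be an eigenvalue: for strictly negatively curved $M$ (in particular the hyperbolic spaces the theorem is meant to cover) $\sigma_1=1$ lies in the essential spectrum and has no eigenfunction. Moreover, ``a Taylor expansion of $E$ at $\cxy$'' is only heuristic on the continuous path space, where $E$ is not defined. The paper's trial function is $\tilde F_\ep=\eta_{1,\kappa}\cdot\sqrt{\la}\bigl(\int_0^1(\varphi_\ep,db)-\int_0^1(\varphi_\ep,\xi)dt\bigr)$ with a $C^1$ approximate minimizer $\varphi_\ep\in\Ltwoo$, and both halves of the Rayleigh quotient are computed through the COH machinery: $(D_0F_\ep)'=\sqrt{\la}(I+T)\varphi_\ep+\text{error}$ gives the numerator, and the denominator is bounded \emph{below} via the COH formula together with the identity $(S^{-1})^{\ast}(I+T)=S$ of Proposition~\ref{S and T2}, which yields $A(\gamma)_{\la}(D_0\tilde F_\ep)'\approx\sqrt{\la}S\varphi_\ep$. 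Without this Section~4 square-root relation there is no evident way to recover $\|\tilde F_\ep-E[\tilde F_\ep]\|_{L^2}^2$, and your sketch supplies no substitute.

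\emph{Outside term in the lower bound.} A single outside piece $\chi_{1,\kappa}$ carries paths with $\rho_{y_0}(\gamma)$ arbitrarily large, so the log-Sobolev inequality (\ref{LSI loop}) has no finite constant on its support and Lemma~\ref{GNS} cannot be applied to $\E^{\la}(F\chi_{1,\kappa},F\chi_{1,\kappa})$ as you propose. You flag this obstacle, but the fix is structural. The paper imports the \emph{multi-scale} partition $\{\tilde\chi_k\}_{k\ge 0}$ from the proof of Theorem~\ref{main theorem 3}: on the shell $\{\rho_{y_0}\in[Rk,R(k+2)]\}$ the log-Sobolev constant is bounded by $C_1R^2(k+2)^2/\la$, so Herbst applies shell by shell, and the Gaussian tail $\nu^{\la}_{x_0,y_0}(\rho_{y_0}\ge Rk)\le e^{-C_2\la R^2k^2}$ (Lemma~\ref{main lemma 3}) beats the growing constant to give $\sum_{k\ge1}\E^{\la}(F\tilde\chi_k,F\tilde\chi_k)\ge(C\la^2-C'\la)\sum_{k\ge1}\|F\tilde\chi_k\|^2$. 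Only the $k=0$ piece (where $\rho_{y_0}$ is bounded) is then further localized near $\cxy$ by the rough-path cutoffs $\eta_{i,\kappa}$. Your two-piece IMS collapses these two different localizations into one; they must be kept separate.
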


We make remarks on Theorem~\ref{main theorem 1}
and Theorem~\ref{main theorem 2}.

\begin{rem}
{\rm 

\noindent
$(1)$
It is not clear whether the same result as in Theorem~\ref{main theorem 2}
holds or not for $P_{x_0,y}(M)$ ($y\ne y_0$)
under Assumption~\ref{assumption C}.
It is more interesting to study non-rotationally general cases.

\noindent
$(2)$~By checking the proof, the same results as
in Theorem~\ref{main theorem 2} hold if the following 
are satisfied,
\begin{itemize}
 \item[(i)] $d(x_0,y_0)$ is smaller than $l$ which satisfies 
Theorem~\ref{main theorem 1} (1) and (2),
\item[(ii)] the log-Sobolev inequality (\ref{LSI loop}) holds,
\item[(iii)] the tail estimate (\ref{tail estimate}) holds.
\end{itemize}

\noindent
$(3)$
If the sectional curvature along
the geodesic $\cxy$ is positive, then 
$\inf\sigma(D_0^2 E(\cxy))<1$ 
and the bottom of the spectrum is an eigenvalue
of $D_0^2E(\cxy)$ and 
is not an essential spectrum.
While the curvature is strictly negative, 
$\inf\sigma(D_0^2 E(\cxy))=1$
and $1$ is not an eigenvalue and belongs to essential spectrum.
This suggests that the second lowest eigenvalue, or more generally,
some low-lying spectrum of the OU operator (with Dirichlet boundary
condition)
on ${\mathcal D}$ or $P_{x_0,y_0}(M)$
over a positively curved manifold belongs to the discrete spectrum,
while the second lowest eigenvalue is embedded in the
essential spectrum in the case of negatively curved manifolds.
In fact, in the proof of upper bound in the main theorems,
we use ``approximate second eigenfunctions'' which
are defined by the eigenfunction which achieves the value
$\inf\sigma(D_0^2 E(\cxy))$ approximately.
If some isometry group acts on $M$ with the fixed points
$x_0$ and $y_0$, we may expect the discrete spectrum have some
multiplicities.
We show these kind of results in the case where $M$ is a compact
Lie group in a forthcoming paper.
}
\end{rem}

As mentioned in the Introduction,
the spectral gap $e_2^{\la}$ for $P_{x_0}(M)$
is defined similarly
and $e_2^{\la}>0$ for all $\la$.
This is due to Fang.
He established a
COH formula and proved the existence of the
spectral gap
in the case where $M$ is compact and $\la=1$.
However, it is obvious that
the same result holds true
on a complete Riemannian manifold with bounded
Ricci curvature for all $\la>0$ .
See also \cite{chl, gong-ma, aida-semiclassical0, aida-gradient}.
The variant of the
COH formula in the loop space case is important in our case also.
To explain the COH formula,
we need some preparations.
Let ${\mathfrak F}_t=\sigma\left(\gamma(s), 0\le s\le t\right)
\vee {\cal N}$,
where ${\cal N}$ is the set of all null sets 
with respect to $\nu^{\la}_{x_0}$.
Then $b(t)=\int_0^tu_0^{-1}\tau(\gamma)_s^{-1}\circ d\gamma(s)$
is an ${\mathfrak F}_t$-Brownian motion with the covariance
$E^{\nu^{\la}_{x_0}}[(b(t),u)(b(s),v)]=(u,v)\frac{t\wedge s}{\la}$
~$(u,v\in \RR^n)$ on $\RR^n$ under $\nu^{\la}_{x_0}$.
We simply say $b(t)$ is a Brownian motion with variance $1/\la$
in this paper.
We recall the notion of the trivialization.
Let $T\in \Gamma(TM\otimes T^{\ast}M))$ be a
$(1,1)$-tensor on $M$,
that is, $T$ is a linear transformation on
each tangent space.
We write
\begin{align}
 \overline{T(\gamma)}_t=u_0^{-1}\tau(\gamma)_t^{-1}T(\gamma(t))\tau(\gamma)_tu_0
\in L(\RR^n,\RR^n).
\end{align}
The definition for general $T\in \Gamma\left((\otimes^pTM)\otimes 
(\otimes^q T^{\ast}M)\right)$ is similar.

We now state COH formula on
$P_{x_0}(M)$.
Below, we use the notation
$\Ltwo:=L^2([0,1]\to \RR^n, dt)$.

\begin{lem}\label{fang COH formula}
Assume $\|{\rm Ric}\|_{\infty}<\infty$.
Let $F\in H^1(P_{x_0}(M),\nu^{\la}_{x_0})$.
Then
\begin{align}
 F(\gamma)-E^{\nu^{\la}_{x_0}}[F]&=
\int_0^1
\left(E\left[\left\{\left((I+R_{0,\la}(\gamma))^{-1}\right)^{\ast}
(DF)(\gamma)'\right\}_t~|~{\mathfrak F}_t\right],db(t)\right),
\end{align}
where $\left(R_{0,\la}(\gamma)\varphi\right)(t)=\frac{1}{2\la}
\overline{{\rm Ric}(\gamma)}_t\int_0^t\varphi(s)ds$,
$\ast$ indicates the adjoint operator
on $\Ltwo$ and
$DF(\gamma)_t'=\frac{d}{dt}DF(\gamma)_t$.
Also $I$ denotes the identity operator on $\Ltwo$.
\end{lem}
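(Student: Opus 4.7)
The plan is to reduce by density to smooth cylindrical $F$, apply It\^o's martingale representation to get $F - E[F] = \int_0^1 (\Phi(t), db(t))$ for a unique adapted $\Phi$, and then identify $\Phi$ by testing against adapted Cameron--Martin perturbations of $b$. This follows the strategy of Fang on compact $M$ once we check that the compactness hypothesis can be replaced by $\|\mathrm{Ric}\|_\infty < \infty$.

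First I would invoke It\^o's martingale representation theorem. The antidevelopment $b$ is an $\mathfrak{F}_t$-Brownian motion with variance $1/\lambda$ whose augmented natural filtration coincides with $\mathfrak{F}_t$, since $\gamma$ and $b$ determine each other via stochastic development. This produces a unique $\mathfrak{F}_t$-adapted $\Phi \in L^2([0,1]\times P_{x_0}(M); \mathbb{R}^n)$ with $F - E^{\nu^\lambda_{x_0}}[F] = \int_0^1 (\Phi(t), db(t))$. By It\^o isometry (with the factor $1/\lambda$ coming from the variance of $b$), for every bounded adapted $h:[0,1]\to \mathbb{R}^n$,
\begin{align*}
\lambda\, E\!\left[F \int_0^1 (h(t), db(t))\right] = E\!\left[\int_0^1 (\Phi(t), h(t))\, dt\right].
\end{align*}

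Next I would compute the same left-hand side a second way through the quasi-invariance of $\nu^\lambda_{x_0}$ under Cameron--Martin shifts of $b$. Setting $H(t):=\int_0^t h(s)\,ds$, form $b^\epsilon := b + \epsilon H$ and let $\gamma^\epsilon$ be its stochastic development. The Girsanov/Cameron--Martin identity, which is valid here because the Girsanov exponential is a true martingale for bounded adapted $h$ and because $\|\mathrm{Ric}\|_\infty < \infty$ prevents explosion of $\gamma^\epsilon$, gives $E[F(\gamma^\epsilon)] = E[F \cdot Z_\epsilon]$ with $Z_\epsilon = \exp(\epsilon\lambda \int (h,db) - \epsilon^2\lambda \int|h|^2/2)$; differentiating at $\epsilon=0$ reproduces $\lambda E[F \int (h,db)]$. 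On the other hand, the Bismut--Driver variational formula for stochastic development shows that the trivialized tangent vector $J(t) := u_0^{-1}\tau(\gamma)_t^{-1} \partial_\epsilon \gamma^\epsilon(t)|_{\epsilon=0}$ satisfies $J(0)=0$ and the linear equation
\begin{align*}
J'(t) + \tfrac{1}{2\lambda}\,\overline{\mathrm{Ric}(\gamma)}_t\, J(t) = h(t),
\end{align*}
which in view of $(R_{0,\lambda}\phi)(t) = \tfrac{1}{2\lambda}\overline{\mathrm{Ric}(\gamma)}_t \int_0^t \phi(s)\,ds$ rewrites as $(I+R_{0,\lambda})J' = h$. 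Since $R_{0,\lambda}$ is a Volterra operator on $L^2$ with $\|R_{0,\lambda}\|_{\mathrm{op}} \le \|\mathrm{Ric}\|_\infty/(2\lambda)$, it is invertible, and $J' = (I+R_{0,\lambda})^{-1}h$. Therefore $\partial_\epsilon|_{\epsilon=0} F(\gamma^\epsilon) = \int_0^1 ((DF)'(t), J'(t))\,dt = \int_0^1 \bigl(((I+R_{0,\lambda})^{-1})^* (DF)'(t),\, h(t)\bigr)\,dt$.

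Equating the two expressions for $\lambda E[F\int(h,db)]$ and letting $h$ range over bounded adapted processes identifies $\Phi(t) = E[\{((I+R_{0,\lambda})^{-1})^*(DF)'\}_t \mid \mathfrak{F}_t]$ for a.e.\ $t$, which is the stated COH formula. The main technical obstacle, relative to Fang's compact setting, is the quasi-invariance/IBP step: one has to ensure non-explosion of the perturbed stochastic development, that the Girsanov density is a genuine martingale, and that $(I+R_{0,\lambda})^{-1}$ has uniformly controlled $L^2$ norm. All three ingredients are supplied by $\|\mathrm{Ric}\|_\infty < \infty$ together with the boundedness of the test direction $h$, after which the rest of the derivation is routine, in line with the author's remark that the extension is not difficult.
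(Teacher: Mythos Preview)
The paper does not spell out a proof; it attributes the formula to Fang and says the extension to $\|\Ric\|_\infty<\infty$ is routine. Your overall plan (It\^o representation, then identify the integrand by pairing with adapted test directions) is Fang's, but your identification step contains a genuine error. You perturb $b\mapsto b+\epsilon H$, develop to $\gamma^\epsilon$, and assert that $J(t):=u_0^{-1}\tau(\gamma)_t^{-1}\partial_\epsilon\gamma^\epsilon(t)|_{\epsilon=0}$ satisfies the pathwise ODE $J'+\tfrac{1}{2\lambda}\overline{\Ric(\gamma)}_t\,J=h$. That is not the derivative of the development map: differentiating the It\^o map brings in the \emph{full} Riemann tensor, not just its Ricci trace. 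The paper itself records this in the reverse direction via the formula $D_hb(t)=h(t)+\int_0^t\int_0^s\overline{R(\gamma)}_u(h(u),\circ db(u))(\circ db(s))$, used repeatedly in Section~5, and explicitly warns in its remarks on the It\^o map that this derivative is not a bounded operator between the natural tangent spaces. Your $J'$ is therefore $h$ plus iterated Stratonovich integrals against $\overline{R(\gamma)}$, not the solution of a Ricci ODE.

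The Ricci-only structure of the COH integrand comes instead from Driver's integration-by-parts formula on $P_{x_0}(M)$: for adapted $h\in\H$ one has $E[D_hF]=E\bigl[F\cdot\lambda\int_0^1((I+R_{0,\lambda})h'(t),db(t))\bigr]$, because the divergence of the path-space vector field $X_h(\gamma)=\tau(\gamma)u_0h$ with respect to $\nu^\lambda_{x_0}$ involves only the Ricci curvature. Combining this with your It\^o-isometry identity, and noting that $(I+R_{0,\lambda})^{\pm1}$ preserve adaptedness since $R_{0,\lambda}$ is a Volterra operator, one gets $E[\int((DF)',h')\,dt]=E[\int(\Phi,(I+R_{0,\lambda})h')\,dt]$ for all adapted $h'$; substituting $h'=(I+R_{0,\lambda})^{-1}\eta$ and taking optional projection yields $\Phi_t=E\bigl[((I+R_{0,\lambda})^{-1})^\ast(DF)'_t\mid\mathfrak{F}_t\bigr]$. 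So the fix is to replace your Girsanov-on-$b$ computation by Driver's IBP on $\gamma$; after that the rest of your argument goes through, and $\|\Ric\|_\infty<\infty$ plays exactly the role you describe.
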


The second derivative of $\log p(t,x,y)$ is related 
to the COH formula on $P_{x_0,y_0}(M)$.
Under Assumption~\ref{assumption C},
we have a good estimate on the first and second derivatives
of $\log p(t,y_0,z)$ with respect to $z$.
Similar estimates of
the heat kernel hold in a
compact set outside cut-locus
when $M$ is a compact Riemannian manifold.
This is studied by Malliavin and Stroock~\cite{ms}
and Gong-Ma~\cite{gong-ma}.
Their results clearly can be extended to
non-compact $\RR^n$ with a nice Riemannian metric which
coincides with the Euclidean metric
outside a bounded set.
The estimates are as follows.

\begin{assumption}\label{assumption D}
 For any compact subset $F\subset {\rm Cut}(y_0)^c$ and
$0<t\le 1$ there exists $C_F>0$ such that
\begin{align}
\sup_{z\in F}
\left|t\nabla^2_z\log p(t,y_0,z)+
\nabla^2_z\left(\frac{1}{2}
d(y_0,z)^2\right)\right|\le C_Ft^{1/2}.
\label{loghessian 0}
\end{align}
\end{assumption}

The following (1) and (2) can be found in \cite{aida-precise}
and \cite{gong-ma}
respectively.

\begin{lem}\label{gong-ma}
$(1)$
Let $M$ be a compact Riemannian manifold
or $\RR^n$ with a Riemannian metric which
coincides with the Euclidean metric
outside a bounded set.
Then Assumption~{\rm \ref{assumption D}}
is satisfied.

\noindent
$(2)$~Suppose Assumption~{\rm \ref{assumption C}} $(1)$ and $(2)$.
Then Assumption~{\rm \ref{assumption D}} is satisfied.
Actually the following stronger
inequalities are valid:

Let $T>0$.
There exist positive constants $C_1, C_2$
which may depend on $T$ such that for all $0<t\le T$,
\begin{align}
&\sup_{z\in M}\left|t\nabla_z\log p(t,y_0,z)-v_z\right|
\le C_1t,\label{loggrad}\\
&\sup_{z\in M}\left|
t\nabla_z^2\log p(t,y_0,z)+
I_{T_zM}+d(y_0,z)\varphi'(d(y_0,z))P_z^{\perp}
\right|\le C_2t,\label{loghessian}
\end{align}
where $v_z$ and $P_z^{\perp}$
are defined in Lemma~$\ref{hessian of d2}$.
\end{lem}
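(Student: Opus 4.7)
The plan is to treat the two parts with separate tools: for (1), a Minakshisundaram--Pleijel parametrix expansion of the heat kernel away from the cut locus, and for (2), direct analysis of the radial heat equation under rotational symmetry.

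For (1), on any compact subset $F \subset \mathrm{Cut}(y_0)^c$ the standard parametrix construction yields smooth coefficients $u_0, u_1, \ldots$ on $F$ and, for each integer $N$, a remainder $r_N(t,y_0,\cdot)$ such that
\begin{equation*}
p(t,y_0,z) = \frac{1}{(2\pi t)^{n/2}} \exp\!\left(-\frac{d(y_0,z)^2}{2t}\right)\!\left(\sum_{k=0}^{N} t^{k} u_k(y_0,z) + t^{N+1} r_N(t,y_0,z)\right),
\end{equation*}
uniformly in $(t,z)\in (0,1]\times F$, with the expansion differentiable to any order in $z$ uniformly on $F$. Taking logarithm and differentiating twice in $z$ yields
\begin{equation*}
t\nabla_z^2 \log p(t,y_0,z) + \nabla_z^2\!\left(\tfrac{1}{2}d(y_0,z)^2\right) = t\,\nabla_z^2 \log u_0(y_0,z) + O(t^2),
\end{equation*}
uniformly on $F$, which is in fact stronger than the claimed $O(t^{1/2})$ bound and so proves (\ref{loghessian 0}). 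For $\RR^n$ equipped with a Riemannian metric agreeing with the Euclidean metric outside a bounded set, a Duhamel/localization argument reduces the analysis to a compact region, the complementary region being handled by the explicit Euclidean Gaussian kernel.

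For (2), rotational symmetry with $y_0$ a pole forces $\mathrm{Cut}(y_0)=\emptyset$ and makes $p(t,y_0,z) = q(t,r)$ for $r = d(y_0,z)$, with $q$ solving
\begin{equation*}
\partial_t q = \tfrac{1}{2}\!\left(\partial_r^2 q + (n-1)\tfrac{f'(r)}{f(r)}\partial_r q\right), \qquad \tfrac{f'(r)}{f(r)} = \tfrac{1}{r} + \varphi'(r).
\end{equation*}
The ansatz $q(t,r) = (2\pi t)^{-n/2} e^{-r^2/(2t)} J(t,r)$ converts this into a transport-type equation for $\log J$ whose coefficients depend polynomially on $\varphi$ and its first few derivatives. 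Granting that $J$ and its $r$-derivatives of order $\le 2$ are bounded uniformly on $(0,T]\times [0,\infty)$ with $J(0,r)>0$, the identity $\nabla_z(\tfrac12 r^2) = -v_z$ together with Lemma~\ref{hessian of d2} yields
\begin{align*}
t\nabla_z \log p(t,y_0,z) &= -\nabla_z(\tfrac12 r^2) + t\,\nabla_z \log J(t,r) = v_z + O(t), \\
t\nabla_z^2 \log p(t,y_0,z) &= -\nabla_z^2(\tfrac12 r^2) + t\,\nabla_z^2 \log J(t,r) = -I_{T_zM} - r\varphi'(r)P_z^\perp + O(t),
\end{align*}
which are exactly (\ref{loggrad}) and (\ref{loghessian}).

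The main obstacle is establishing the uniform bounds on $J$ over the full range $r \in [0,\infty)$. Near $r=0$, the smoothness $\varphi(r)=\phi(r^2)$ of Assumption~\ref{assumption C} (2) allows one to extend the coefficients of the equation for $\log J$ smoothly through the origin, so the apparently singular $1/r$ term in $f'/f$ combines with $\varphi'$ into a smooth expression. For large $r$, the uniform bounds on $\varphi^{(k)}$ for $1\le k\le 4$ provided by Assumption~\ref{assumption C} (1) --- the orders $k\le 4$ being exactly what is needed so that two $z$-derivatives of $\log J$ remain under control with margin for the remainder --- allow a Gr\"onwall-type argument along characteristics of the transport equation to give the required uniform estimates. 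Without this global control on $\varphi$ the gradient and Hessian of the log heat kernel could blow up at infinity faster than $t^{-1}$, so the tail behavior in Assumption~\ref{assumption C} is essential, not just its local behavior near $y_0$.
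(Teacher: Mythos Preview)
The paper does not give its own proof of this lemma: it simply records that (1) and (2) can be found in \cite{aida-precise} and \cite{gong-ma} (with the underlying compact-manifold result going back to Malliavin--Stroock~\cite{ms}). So there is no argument in the paper to compare against beyond these citations.

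Your outline is broadly the standard route taken in those references. For (1), the Minakshisundaram--Pleijel parametrix on a compact set disjoint from the cut locus is exactly how \cite{ms, gong-ma} obtain the short-time Hessian asymptotics; your observation that the parametrix in fact gives $O(t)$ rather than merely $O(t^{1/2})$ is correct. For (2), writing $p(t,y_0,z)=(2\pi t)^{-n/2}e^{-r^2/(2t)}J(t,r)$ and reducing to control of $J$ and its first two $r$-derivatives is precisely the scheme carried out in \cite{aida-precise}.

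The one place your sketch is thin is the sentence beginning ``Granting that $J$ and its $r$-derivatives of order $\le 2$ are bounded uniformly\ldots''. This is the entire content of the lemma in the rotationally symmetric case, and it is not a short Gr\"onwall argument: in \cite{aida-precise} the equation for $J$ is rewritten as an integral equation along the backward characteristics $r\mapsto r-s$ (so one needs $\varphi$ and its derivatives controlled along the whole ray, not just near $r$), and the iteration that yields uniform-in-$r$ bounds uses the specific structure of the radial Laplacian together with the assumed bounds on $\varphi',\ldots,\varphi^{(4)}$. Your description of why the orders $k\le 4$ suffice is right in spirit, but a reader would not be able to reconstruct the estimate from what you wrote; if you intend this as a self-contained proof rather than a pointer to \cite{aida-precise}, that step needs to be filled in.
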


The important point in the estimate (\ref{loghessian})
is that the norm of the second derivative of
$t\log p(t,y_0,z)$ is bounded from above by a linear function
of $d(y_0,z)$.
Probably, the estimates (\ref{loggrad}) and (\ref{loghessian})
hold under weaker assumptions on $\varphi$.
It is natural and interesting to study 
non-rotationally symmetric general cases.

Our Dirichlet Laplacian is defined
on the set of paths which are restricted in
the small ball.
Therefore, even if we vary the Riemannian metric outside the
ball, the spectral property of the operator
would not change.
We explain this reasoning more precisely.
Let $(M,g)$ and $(M',g')$ be Riemannian manifolds satisfying
Assumption~\ref{assumption B}.
Let $y_0\in M, y_0'\in M'$ and
$B_l(y_0)\subset M, B_l(y_0')\subset M'$ be open metric
balls. Let $x_0\in B_l(y_0)$.
Let $l_{\ast}>l$.
Assume that $l_{\ast}$ is smaller than the injectivity radius at
$y_0$.
We assume that there exists a 
Riemannian isometry $\Phi : B_{l_{\ast}}(y_0)\to B_{l_{\ast}}(y_0')$.
Then $\Phi(B_l(y_0))=B_l(y_0')$.
Let $x_0'=\Phi(x_0)$.
Let $\nu^{\la}_{M,x_0,y_0}$ and $\nu^{\la}_{M',x_0',y_0'}$
denote the pinned measures on each manifold.
We write
\begin{align*}
{\cal D}&=\{\gamma\in P_{x_0,y_0}(M)~|~\gamma(t)\in B_l(y_0)
~\mbox{for all $0\le t\le 1$}\},\\
{\cal D}'&=\{\gamma\in P_{x_0',y_0'}(M')~|~\gamma(t)\in B_l(y_0')
~\mbox{for all $0\le t\le 1$}\}.
\end{align*}
Let $A\subset {\cal D}$ be a Borel measurable subset.
Define $\Phi : {\cal D}\to {\cal D}'$ by
$\Phi(\gamma)(t)=\Phi(\gamma(t))$.
$p^M(t,x,y)$ and $p^{M'}(t,x',y')$ denote the heat kernels on
$M$ and $M'$.
Note that $p^M(t,x,y)\ne p^{M'}(t,\Phi(x),\Phi(y))$ $x,y\in
B_{l_{\ast}}(y_0)$
generally.
However, by the uniqueness of the solution of
stochastic differential equations,
we have
\begin{align}
\frac{\nu^{\la}_{M,x_0,y_0}(A)}{\nu^{\la}_{M,x_0,y_0}({\cal D})}
&=
\frac{\nu^{\la}_{M',x_0',y_0'}(\Phi(A))}{\nu^{\la}_{M',x_0',y_0'}({\cal D}')}.
\end{align}
By this, for any bounded Borel measurable function
$F$ on ${\cal D}'$,
\begin{align}
 \int_{{\cal D}}F(\Phi(\gamma))
\frac{d\nu^{\la}_{M,x_0,y_0}(\gamma)}{\nu^{\la}_{M,x_0,y_0}({\cal
 D})}
&=
\int_{{\cal D}'}F(\gamma)
\frac{d\nu^{\la}_{M',x_0',y_0'}(\gamma)}
{\nu^{\la}_{M',x_0',y_0'}({\cal D}')}.
\end{align}
Let $F\in H^{1,2}(P_{x'_0,y'_0}(M'))$.
If $F\in H^{1,2}_0({\cal D}',\nu^{\la}_{x_0',y_0'})$, then
$$
\tilde{F}(\gamma):=
F\left(\Phi(\gamma)\right) \chi\left(\sup_{0\le t\le 1}
d'(\Phi(\gamma)(t),y_0')\right)
\in H^{1,2}_0({\cal D},\nu^{\la}_{x_0,y_0}),
$$
where $\chi=\chi(t)$ is a non-negative smooth function such that
$\chi(t)=1$ for $t\le \frac{l+l_{\ast}}{2}$ and
$\chi(t)=0$ for $t\ge \frac{l+2l_{\ast}}{3}$.
Moreover 
$\|D_0F\|_{L^2(\nu^{\la}_{M',x'_0,y'_0}/\nu^{\la}_{M',x_0',y_0'}({\cal D}'))}
=\|D_0\tilde{F}\|_{L^2(\nu^{\la}_{M,x_0,y_0}/\nu^{\la}_{M,x_0,y_0}({\cal
D}))}$.
To prove these results, we need 
$\sup_{0\le t\le 1}d(\gamma(t),\Phi^{-1}(y_0'))\in
H^{1,2}(P_{x_0,y_0}(M))$ 
which can be found in Lemma 2.2 and Remark 2.4 in
\cite{aida-coh}.

The above argument implies that
$$
e^{\la}_{Dir,2,{\cal D}}=e^{\la}_{Dir,2,{\cal D}'}.
$$
Hence, in the proof of Theorem~\ref{main theorem 1},
we may assume that 
$M$ is diffeomorphic to ${\mathbb R}^n$
and the Riemannian metric is flat outside a certain bounded
subset and Assumption~\ref{assumption D} is satisfied.
The key ingredient of the proof of Theorem~\ref{main theorem 1}
is a version of the COH formula in
\cite{aida-coh2} which can be extended to the above non-compact
${\mathbb R}^n$ case with a nice Riemannian metric.
Since the COH formula is strongly related to the heat kernel $p(t,x,y)$ 
on $M$ itself, the above observation is important.
We explain COH formula on $P_{x_0,y_0}(M)$.
Let
$V_{y_0}^{\la}(t,z)=\grad_z\log p\left(\frac{1-t}{\la},y_0,z\right)$
~$(0\le t<1)$.
We write
$$
\overline{V_{y_0}^{\la}(t,\gamma)}_t=
u_0^{-1}\tau(\gamma)_t^{-1}V_{y_0}^{\la}(t,\gamma(t))
\in \RR^n.
$$
Also 
$\DVbar$
denotes an $n\times n$ matrix.
More explicitly,
\begin{equation}
\DVbar=
u_0^{-1}\tau(\gamma)_t^{-1}\nabla_z\grad_z \log p\left(\frac{1-t}{\la},y_0,z\right)
\Bigg |_{z=\gamma(t)}\tau(\gamma)_tu_0.
\end{equation}
Let 
$w(t)=b(t)-\frac{1}{\la}\int_0^t\overline{V_{y_0}^{\la}(s,\gamma)}_sds$.
This process is defined for $t<1$ and it is not difficult to check that
this can be extended continuously up to $t=1$.
Let $\mathcal{N}^{x_0,y_0,t}$ be the set of all null sets of
$\nu^{\la}_{x_0,y_0}|_{{\mathfrak F}_t}$ and set
${\mathfrak G}_t={\mathfrak F}_t\vee \mathcal{N}^{x_0,y_0,1}$.
Then $w$ is an ${\mathfrak G}_t$-adapted Brownian
motion for $0\le t\le 1$ such that
$E^{\nu^{\la}_{x_0,y_0}}
[\left(w(t),u\right)\left(w(s),v\right)]
=\frac{t\wedge s}{\la}(u,v)$ for any
$u,v\in \RR^n$.
Let
\begin{align}
K(\gamma)_{\la,t}=
-\frac{1}{2\la}\Ricbar
+\frac{1}{\la}\DVbar.
\end{align}
Let $M(\gamma)_{\la,t}$ be the linear mapping on $\RR^n$ 
satisfying the differential equation:
\begin{align}
M(\gamma)_{\la,t}'&=K(\gamma)_{\la,t}M(\gamma)_{\la,t}
\quad 0\le t<1,\\
M(\gamma)_{\la,0}&=I.
\end{align}
Using $M$ and $K$, we define for a bounded measurable function 
$\varphi$ with ${\rm supp}\, \varphi\subset [0,1)$,
\begin{align}
J(\gamma)_{\la}\varphi(t)&=
(M(\gamma)_{\la,t}^{\ast})^{-1}\int_t^1
M(\gamma)_{\la,s}^{\ast}K(\gamma)_{\la,s}\varphi(s)ds.
\end{align}
The operator $\left((I+R_{0,\la}(\gamma))^{-1}\right)^{\ast}$
in the COH formula in Lemma~\ref{fang COH formula}
coincides with $J(\gamma)_{\la}$ which is
obtained by setting 
$K(\gamma)_{\la}=-\frac{1}{2\la}\overline{{\rm Ric}(\gamma)_t}$
in the above.
Also let 
\begin{equation}
A(\gamma)_{\la}=I+J(\gamma)_{\la}.
\end{equation}
We are ready to state our COH formula for functions on
$P_{x_0,y_0}(M)$
and its immediate consequences.

\begin{lem}\label{coh formula}~
\noindent
$(1)$ Assume $M$ is diffeomorphic to $\RR^n$ and
the Riemannian metric is flat outside a bounded subset.
Let $0<l<\infty$.
Suppose ${\cal D}(={\cal D}_l)$ 
satisfies conditions $(1), (2)$ in Theorem~$\ref{main theorem 1}$.
Let $F\in H^{1,2}_0({\mathcal D})$.
\begin{enumerate}
\item[{\rm (i)}] It holds that
$D_0F(\gamma)=0$ for $\nu^{\la}_{x_0,y_0}$-almost all 
$\gamma\in {\mathcal D}^c$.
\item[{\rm (ii)}] 
There exists $\la_{\ast}>0$ such that 
$A(\gamma)_{\la}$ can be extended to a
bounded linear operator on $\Ltwo$ for each $\gamma$
for all $\la\ge\la_{\ast}$.
Let $a(\la)=\esssup\left\{\|A(\gamma)_{\la}\|_{op}^2~|~\gamma\in
{\cal D}\right\}$.
Here $\|\cdot\|_{op}$ denotes the operator norm.
$\displaystyle{\sup_{\la\ge \la_{\ast}}a(\la)<\infty}$ holds
and for $\la\ge \la_{\ast}$,
the following COH formula holds:
\begin{align}
E^{\nu^{\la}_{x_0,y_0}}[F|{\mathfrak G}_t]=
E^{\nu^{\la}_{x_0,y_0}}[F]+
\int_0^t\left(H(s,\gamma),dw(s)\right),\quad 0\le t\le 1,
\label{COH loop 1}
\end{align}
where
\begin{align}
H(s,\gamma)=
E^{\nu^{\la}_{x_0,y_0}}\left[
A(\gamma)_{\la}(D_0F(\gamma)')(s)| {\mathfrak G}_s\right].
\quad\label{COH loop 2}
\end{align}
and $D_0F(\gamma)'_t=\frac{d}{dt}(D_0F)(\gamma)_t$.
Moreover
the following inequalities hold for $\la\ge \la_{\ast}$.
\begin{align}
E^{{\nu}^{\la}_{x_0,y_0}}\left[
F^2\log \left(F^2/\|F\|^2_{L^2({\nu}_{x_0,y_0}^{\la})}\right)
\right]
&\le
\frac{2a(\la)^2}{\la}E^{{\nu}^{\la}_{x_0,y_0}}\left[
|D_0F|^2\right],\label{lsi1}\\
\la
E^{\nu^{\la}_{x_0,y_0}}\left[\left(F-E^{{\nu}^{\la}_{x_0,y_0}}[F]\right)^2
\right]
&\le E^{\nu^{\la}_{x_0,y_0}}\left[
|A(\gamma)_{\la}D_0F|^2\right].\label{poincare1}
\end{align}
\end{enumerate}

\noindent
$(2)$~Assume $M$ is a rotationally symmetric Riemannian manifold
with a pole $y_0$. Suppose Assumption~{\rm \ref{assumption C}}.
\begin{itemize}
\item[{\rm (i)}] The operator $A(\gamma)_{\la}$ can be extended to
a bounded linear operator on $\Ltwo$ for each $\gamma$ for all $\la>0$.
Moreover for each $\la_0>0$, there exists a positive constant $C_0$
which depends only on $\varphi$ and $\la_0$ such that
for all $\la\ge \la_0$,
\begin{align}
 \|A(\gamma)_{\la}\|_{op}&\le
C_0\rho_{y_0}(\gamma)\qquad \mbox{for any $\gamma$},
\end{align}
where $\rho_{y_0}(\gamma)=1+\max_{0\le t\le 1}d(y_0,\gamma(t))$.

 \item[{\rm (ii)}] For $F\in H^{1,2}(P_{x_0,y_0}(M),\nu^{\la}_{x_0,y_0})$,
the COH formula~$(\ref{COH loop 1}), (\ref{COH loop 2})$ hold.
\item[{\rm (iii)}] For each $\la_0>0$, there exists a positive constant $C_1$
which depends only on $\varphi$ and $\la_0$ such that
for any $\la\ge \la_0$ and $F\in \FC(P_{x_0,y_0}(M))$,
\begin{align}
&\int_{P_{x_0,y_0}(M)}
F^2(\gamma)\log\left(F(\gamma)^2/\|F\|_{L^2(\nu^{\la}_{x_0,y_0})}^2\right)
d\nu^{\la}_{x_0,y_0}(\gamma)\nonumber\\
&\le \int_{P_{x_0,y_0}(M)}
\frac{C_1}{\la}\rho_{y_0}(\gamma)^2
|D_0F(\gamma)|^2d\nu^{\la}_{x_0,y_0}(\gamma).\label{LSI loop}
\end{align}
\end{itemize}
\end{lem}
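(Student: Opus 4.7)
The plan is to prove both parts via a common template: translate the short-time heat-kernel Hessian estimates of Lemma~\ref{gong-ma} into operator bounds on $A(\gamma)_\la$, and then run the integration-by-parts/martingale representation machinery of \cite{aida-coh,aida-coh2}. Locality (Part~(1)(i)) is handled separately: $H^{1,2}_0({\cal D})$ is an ideal of the ambient Dirichlet space and smooth cylinder functions vanishing in a neighborhood of ${\cal D}^c$ are dense in it, so the chain rule forces $D_0F=0$ $\nu^\la_{x_0,y_0}$-a.s.\ on ${\cal D}^c$.

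The heart of the argument is the operator bound on $A(\gamma)_\la=I+J(\gamma)_\la$. By the reduction in the paragraph preceding the lemma I may assume Assumption~\ref{assumption D}. Rescaling $t\mapsto (1-t)/\la$ in (\ref{loghessian 0}) then yields, for $\gamma(t)\in B_l(y_0)$,
\begin{align*}
\tfrac{1}{\la}\DVbar &= -\tfrac{1}{1-t}\overline{\nabla^2 k(\gamma(t))}_t+O_{L^\infty}\!\!\left((\la(1-t))^{-1/2}\right),
\end{align*}
so that $K(\gamma)_{\la,t}=-(1-t)^{-1}B(t)+R(\la,t)$ with $B(t)\succeq \tfrac12 I$ by condition~(2) of Theorem~\ref{main theorem 1} and $\|R(\la,t)\|\le C/\la+C(\la(1-t))^{-1/2}$. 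This is the structural hypothesis of Lemma~3.1 in \cite{aida-coh}, which solves the singular ODE $M'=KM$ on $[0,1)$ and bounds $J(\gamma)_\la$ on $\Ltwo$ uniformly in $\gamma\in{\cal D}$ and in $\la\ge \la_\ast$, with $\la_\ast$ chosen large enough to absorb the remainder. For Part~(2) I use the global bound (\ref{loghessian}) of Lemma~\ref{gong-ma}(2) instead: the principal part becomes $-(1-t)^{-1}\bigl(I+d(y_0,\gamma(t))\varphi'(d(y_0,\gamma(t)))P^\perp\bigr)$, which by Assumption~\ref{assumption C}(1),(3) lies between $\tfrac12(1-t)^{-1}I$ and $C\rho_{y_0}(\gamma)(1-t)^{-1}I$ in the quadratic-form sense; the same ODE analysis then yields the pointwise bound $\|A(\gamma)_\la\|_{op}\le C_0\rho_{y_0}(\gamma)$.

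With $A(\gamma)_\la$ bounded, the COH formula (\ref{COH loop 1})--(\ref{COH loop 2}) is obtained by applying the martingale representation theorem (relative to the ${\mathfrak G}_t$-Brownian motion $w$) to $E^{\nu^\la_{x_0,y_0}}[F|{\mathfrak G}_t]$; the integrand is identified via the pinned integration-by-parts formula of \cite{aida-coh2}. The Poincar\'e bound (\ref{poincare1}) is then immediate from It\^o isometry (the variance of $w$ is $1/\la$) together with conditional-expectation contraction applied to $|H(s,\gamma)|$. The log-Sobolev inequality (\ref{lsi1}) follows from the Capitaine--Hsu--Ledoux martingale method applied to $M_t=E^{\nu^\la_{x_0,y_0}}[F|{\mathfrak G}_t]$: expressing the entropy of $F^2$ as a stochastic integral of the quadratic variation of $M_t$ and inserting the bound $|H(s,\gamma)|^2\le a(\la)\,E^{\nu^\la_{x_0,y_0}}[|D_0F'(s)|^2|{\mathfrak G}_s]$ via Cauchy--Schwarz produces the coefficient $2a(\la)^2/\la$. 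Part~(2)(iii) is obtained by the identical calculation with $a(\la)$ replaced by the pointwise $C_0^2\rho_{y_0}(\gamma)^2$, using Part~(2)(i).

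The main obstacle is the operator bound above. The $>1/2$ threshold on $\nabla^2 k$ is sharp for the singular ODE $M'=KM$ -- it is exactly what makes $\int_t^1 M_s^\ast K_s\,ds$ converge in the scalar prototype $M_t=(1-t)^\alpha$ -- so both the principal part and the $O((\la(1-t))^{-1/2})$ heat-kernel remainder must be tracked delicately. This is what forces the restriction $\la\ge \la_\ast$ in Part~(1) and what makes Assumption~\ref{assumption C}(3) the right condition for the global estimate in Part~(2).
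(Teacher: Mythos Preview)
Your proposal is correct and follows essentially the same route as the paper's own proof, which is extremely terse: the paper simply says Part~(1) is ``similar to \cite{aida-coh2}'' and Part~(2) ``follows from Lemma~3.2 and Theorem~3.3 in \cite{aida-coh} and Lemma~2.3 in \cite{aida-coh2}'', then records the decomposition $K(\gamma)_{\la,t}=\frac{1}{1-t}(-\alpha+C_1(t))+C_2(t)$ with $\alpha=1+\inf_{r>0}r\varphi'(r)>1/2$, $C_1(t)\le 0$, $|C_2(t)|\le C/\la$, which is exactly the structural input to Lemma~3.1 of \cite{aida-coh} that you invoke. Your expansion --- identifying the principal part as $-(1-t)^{-1}\overline{\nabla^2 k(\gamma)}_t$ via Lemma~\ref{gong-ma}, using condition~(2) of Theorem~\ref{main theorem 1} (resp.\ Assumption~\ref{assumption C}(3)) for the $>1/2$ lower bound, and then running the \cite{aida-coh,aida-coh2} machinery for the COH formula and the Capitaine--Hsu--Ledoux argument for the LSI --- is precisely what those citations unpack.
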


\begin{proof}
The proof of $(1)$ is similar to that in \cite{aida-coh2}.
$(2)$ follows from
Lemma 3.2 and Theorem 3.3 in \cite{aida-coh}
and Lemma 2.3 in \cite{aida-coh2}.
In the present case, we have
\begin{align}
K(\gamma)_{\la,t}=
\frac{1}{1-t}\left(-\alpha+C_1(t)\right)+C_2(t),
\end{align}
where 
\begin{align*}
\alpha&=1+\inf_{r>0}r\varphi'(r)>1/2,\\
 C_1(t)&=\left\{\left(\inf_{r>0}r\varphi'(r)\right)-d(y_0,\gamma(t))
\varphi'\Bigl(d(y_0,\gamma(t))\Bigr)\right\}
\overline{P^{\perp}(\gamma)}_{\gamma(t)}\\
&\qquad\quad +
\left(\inf_{r>0} r\varphi'(r)\right)
 \overline{P(\gamma)}_{\gamma(t)},\\
|C_2(t)|&\le \frac{C}{\la}
\end{align*}
and $C$ is a positive constant.
The case $\la=1$ is considered in \cite{aida-coh}
and the estimate in the hyperbolic space case with general $\la$
can be found in Remark 2.4 in \cite{aida-coh2}.
The proof of general cases are similar to them.
\end{proof}

Under the assumption in the lemma above,
$A(\gamma)_{\la}$ is a bounded linear operator on
$\Ltwo$ for $\nu^{\la}_{x_0,y_0}$ almost all $\gamma$.
However, we cannot expect the usual continuity property
of the mapping $\gamma\mapsto A(\gamma)_{\la}$ 
because they are defined by using It\^o's stochastic integrals.
The inequality (\ref{poincare1}) implies that 
$\liminf_{\la\to\infty}\frac{e^{\la}_{Dir,2,{\mathcal D}}}{\la}>0$.
On the other hand, we cannot conclude $e_2^{\la}>0$ for
$P_{x_0,y_0}(M)$ by the
log-Sobolev inequality (\ref{LSI loop}) because
the operator norm 
$A(\gamma)_{\la}$ is not uniformly bounded. 

As mentioned in the Introduction,
the same result as in Theorem~\ref{main theorem 2}
holds for $P_{x_0}(M)$.
We prove it
as a warm up before proving our main theorems.
For simplicity, we assume $M$ is compact.
After the proof, we explain different points of the proof
in the loop space case.

\begin{thm}\label{e2la on Px0}
Let $M$ be a compact Riemannian manifold.
Let $e_2^{\la}$ be the spectral gap
of the Dirichlet form $\E^{\la}$ 
on $P_{x_0}(M)$ with $\nu^{\la}_{x_0}$.
Then $e_2^{\la}>0$ for all $\la>0$
and
 \begin{align}
  \lim_{\la\to\infty}
\frac{e_2^{\la}}{\la}=1.\label{e2 asymptotics 2}
 \end{align}
\end{thm}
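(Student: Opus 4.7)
The plan is to combine Fang's COH formula (Lemma~\ref{fang COH formula}) with the Itô isometry to obtain matching lower and upper bounds; positivity of $e_2^\la$ will come as a byproduct of the lower-bound estimate, so no separate argument for existence of a spectral gap is needed.

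For the lower bound, I will apply the COH formula to an arbitrary $F\in H^{1,2}(P_{x_0}(M),\nu^\la_{x_0})$ with mean zero. Since $b$ is a Brownian motion of variance $1/\la$ under $\nu^\la_{x_0}$, the Itô isometry followed by conditional Jensen yields
$\|F\|_{L^2(\nu^\la_{x_0})}^2\le \frac{1}{\la}\esssup_\gamma\|(I+R_{0,\la}(\gamma))^{-1}\|_{op}^2\,\E^\la(F,F)$.
Because $\|\Ric\|_\infty<\infty$, a direct $\Ltwo$-estimate on the Volterra-type operator $R_{0,\la}(\gamma)\varphi(t)=\frac{1}{2\la}\overline{\Ric(\gamma)}_t\int_0^t\varphi(s)ds$ gives $\|R_{0,\la}(\gamma)\|_{op}=O(1/\la)$ uniformly in $\gamma$. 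Hence $I+R_{0,\la}(\gamma)$ is invertible for every $\la>0$ (via Neumann series), so $e_2^\la>0$ for every $\la>0$, and $\esssup_\gamma\|(I+R_{0,\la}(\gamma))^{-1}\|_{op}\to 1$ as $\la\to\infty$, which delivers $\liminf_{\la\to\infty}e_2^\la/\la\ge 1$.

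For the upper bound, I will use an endpoint trial function. Fix a unit vector $v\in T_{x_0}M$ and a smooth $f\in C^\infty(M)$ with $f(x_0)=0$, $\grad f(x_0)=v$, and set $F^\la(\gamma)=\sqrt{\la}\,f(\gamma(1))$. Since $DF^\la(\gamma)_t=\sqrt{\la}\,u_0^{-1}\tau(\gamma)_1^{-1}\grad f(\gamma(1))\,t$, we get $\E^\la(F^\la,F^\la)=\la\,E^{\nu^\la_{x_0}}[|\grad f(\gamma(1))|^2]$, and the Gaussian heat-kernel bound in Assumption~\ref{assumption A}(1) forces $\gamma(1)\to x_0$ in every $L^p(\nu^\la_{x_0})$, so this quantity converges to $\la$. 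Re-running the COH/Itô-isometry argument for $f(\gamma(1))$ itself, using $\|(I+R_{0,\la})^{-1}\|_{op}\to 1$ and $\grad f(\gamma(1))\to v$, yields $\la\,\mathrm{Var}_{\nu^\la_{x_0}}(f(\gamma(1)))\to |v|^2=1$, so $\|F^\la-E^{\nu^\la_{x_0}}[F^\la]\|_{L^2}^2\to 1$. The variational characterization of $e_2^\la$ then delivers $\limsup_{\la\to\infty}e_2^\la/\la\le 1$.

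The main technical point is the uniform-in-$\gamma$ operator-norm bound $\|R_{0,\la}(\gamma)\|_{op}=O(1/\la)$, which is an elementary consequence of boundedness of the Ricci curvature. This is precisely what breaks down in the pinned case $P_{x_0,y_0}(M)$, where the analogous operator $A(\gamma)_\la$ involves the Hessian of the log heat kernel and is neither $O(1/\la)$ nor uniformly bounded in $\gamma$, and is only rough-path-continuous in $\gamma$. On $P_{x_0}(M)$ no singular drift enters the SDE for $\gamma$, so neither delicate local analysis near the constant path $x_0$ nor rough-path techniques are required, and this is why $P_{x_0}(M)$ serves as a warm-up for the genuinely harder pinned case.
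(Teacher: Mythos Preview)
Your lower-bound argument coincides with the paper's: both feed Fang's COH formula into the It\^o isometry, apply conditional Jensen, and use the uniform bound $\|(I+R_{0,\la}(\gamma))^{-1}\|_{op}\le 1+C/\la$ for $\la\ge\la_0$ to obtain $e_2^\la\ge\la(1+C/\la)^{-2}$. One small caveat: the estimate $\|R_{0,\la}\|_{op}=O(1/\la)$ gives a contraction only for large $\la$, so ``via Neumann series'' does not by itself cover \emph{all} $\la>0$; you need the Volterra structure (the factorial decay $\|R_{0,\la}^k\|\le (C/\la)^k/k!$) to get invertibility with a finite bound for small $\la$. You hint at this by writing ``Volterra-type'', and the paper simply defers the point to Fang, so this is not a real gap.

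The upper bound is where the two proofs diverge. The paper chooses the trial function $F(\gamma)=\sqrt{\la}\int_0^1(\varphi(t),db(t))$; then $E[F]=0$ and $E[F^2]=1$ are immediate, but computing $|DF|_\H^2$ requires the formula for the $H$-derivative of a stochastic integral along $b$, which brings in the curvature tensor $\overline{R(\gamma)}_t$, and the bulk of the paper's proof is spent manipulating these iterated stochastic integrals to reach $\int_0^1E[|DF'_t|^2]dt\le\la+C$. Your cylinder function $F^\la(\gamma)=\sqrt{\la}f(\gamma(1))$ reverses the workload: the $H$-derivative is trivial, so $\E^\la(F^\la,F^\la)=\la\,E[|\grad f(\gamma(1))|^2]\to\la$ by concentration of $\gamma(1)$ at $x_0$, while the variance $\la\,\mathrm{Var}(f(\gamma(1)))\to 1$ must be obtained separately. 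Your route via the \emph{exact} It\^o isometry in the COH representation (not merely the Jensen upper bound) together with $u_0^{-1}\tau(\gamma)_1^{-1}\grad f(\gamma(1))\to u_0^{-1}v$ in $L^2(\nu^\la_{x_0})$ is correct---note that you also need $\tau(\gamma)_1\to I$, which follows from concentration of the whole path, not just the endpoint. Your argument is arguably more elementary here since it avoids the derivative-of-stochastic-integral machinery entirely; on the other hand, the paper's stochastic-integral trial function is precisely the template that carries over to the pinned case in the proof of Theorem~\ref{main theorem 1}, where the approximate second eigenfunction is again built from $\int_0^1(\varphi_\ep,db)$, so the paper's choice is a deliberate warm-up for the harder theorem.
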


\begin{proof}
We use the COH formula 
(\ref{fang COH formula}).
By using 
\begin{align}
 \|(I+R_{0,\la}(\gamma))^{-1}\|_{op}\le 1+\frac{C}{\la}
\quad \mbox{for any $\la\ge \la_0>0$},\label{coefficient operator 1}
\end{align}
we get
\begin{align}
 E^{\nu^{\la}_{x_0}}
\left[(F-E^{\nu^{\la}_{x_0}}[F])^2\right]
&\le
\frac{1}{\la}\left(1+\frac{C}{\la}\right)^2E\left[|DF(\gamma)|^2\right].
\end{align}
Here $C$ depends on $\la_0$.
Since $e^{\la}_1=0$ and the corresponding eigenfunction is
a constant function,
we have $e_2^{\la}\ge \la(1+\frac{C}{\la})^{-2}$ which proves
that $\liminf_{\la\to\infty}\frac{e^{\la}_2}{\la}\ge 1$.
We prove converse estimate.
To this end, we consider a candidate of
approximate second (generalized) eigenfunction.
Let $\varphi\in \Ltwo$ and assume
$\|\varphi\|_{\Ltwo}=1$.
Let $F(\gamma)=\sqrt{\la}\int_0^1\left(\varphi(t),db(t)\right)$.
Then $E^{\nu^{\la}_{x_0}}[F]=0$ and
$E^{\nu^{\la}_{x_0}}[F^2]=1$.
We have $F\in {\rm D}(\E)$ and
\begin{align*}
 D_{h}
\int_0^1\left(\varphi(t),db(t)\right)&=
\int_0^1\left(\varphi(t),h'(t)\right)dt
+\int_0^1
\Big\langle\varphi(t),
\int_0^t\left(\overline{R(\gamma)}_s(h(s),\circ db(s))(\circ db(t))\right)
\Big\rangle,
\end{align*}
where $\overline{R(\gamma)}_t$ is the trivialization
of the Riemannian curvature tensor
and $\langle\cdot,\cdot\rangle$ also denotes the inner
product in $\RR^n$.
The readers are referred to
\cite{cruzeiro-malliavin, aida-irred} for this formula.
See \cite{driver0, leandre1} also.

Since
\begin{align}
&\int_0^1
\Big\langle \varphi(t),
\int_0^t\left(\overline{R(\gamma)}_s(h(s),\circ db(s))(\circ db(t))\right)
\Big\rangle\nonumber\\
&=\int_0^1\varphi^j(t)
\Big\langle
\ep_j,\int_0^t\left(\overline{R(\gamma)}_s(h(s),\circ db(s))(\circ db(t))\right)
\Big\rangle\nonumber\\
&=\int_0^1
\int_0^t\Big\langle \overline{R(\gamma)}_s(h(s),\circ db(s))(\ep_j),
\circ d\int_t^1\varphi^j(s)db(s)\Big\rangle\nonumber\\
&=\int_0^1
\Big\langle\overline{R(\gamma)}_t(\circ db(t),h(t))(\ep_j),
\int_t^1\varphi^j(s)db(s)\Big\rangle\nonumber\\
&=\int_0^1\Big\langle\overline{R(\gamma)}_t(\ep_j,\int_t^1\varphi^j(s)db(s))(\circ
 db(t)), h(t)
\Big\rangle\nonumber\\
&=\int_0^1\Big\langle\overline{R(\gamma)}_t(\int_t^1\varphi(s)db^i(s),\ep_i)
(\circ db(t)), h(t)\Big\rangle\nonumber\\
&=\int_0^1
\Big\langle\int_t^1\overline{R(\gamma)}_u\left(\int_u^1\varphi(s)db^i(s),
\ep_i\right)(\circ db(u)), h'(t)\Big\rangle dt,
\end{align}
we obtain
\begin{align}
 DF(\gamma)'_t&=\sqrt{\la}\varphi(t)+
\sqrt{\la}\int_t^1\overline{R(\gamma)}_u\left(\int_u^1\varphi(s)db^i(s),
\ep_i\right)(\circ db(u))\nonumber\\
&=\sqrt{\la}\varphi(t)+\sqrt{\la}\int_t^1\overline{R(\gamma)}_u\left(\ep_j,
\ep_i\right)(\circ db(u))\int_0^1\varphi^j(s)db^i(s)\nonumber\\
&\quad -\sqrt{\la}\int_t^1\overline{R(\gamma)}_u\left(\int_0^u\varphi(s)db^i(s),
\ep_i\right)(\circ db(u)).
\end{align}
By a standard calculation, we have
\begin{align}
\int_0^1 E\left[|DF(\gamma)'|_t^2\right]dt&\le
\la+C.
\end{align}
This implies (\ref{e2 asymptotics 2}).
\end{proof}

As in the proof above, 
the COH formula and the estimate
$\lim_{\la\to\infty}\|I+R_{0,\la}(\gamma)\|_{op}=1$
immediately implies the lower bound of the 
limit.
In the loop space case, $A(\gamma)_{\la}$ is not uniformly
bounded in $\gamma$ and the existence of the spectral gap is
not obvious.
This difficulty can be solved by using the 
log-Sobolev inequality (\ref{LSI loop}).
In order to obtain precise asymptotics of the spectral gap, 
we need
continuity theorem in rough path analysis.
For this purpose, we need to consider
the operator $A(c_{x_0,y_0})_{\infty}=\lim_{\la\to\infty}A(c_{x_0,y_0})_{\la}$.
In the next section, we study some relations between 
$A(c_{x_0,y_0})_{\infty}$
and the Hessian of the energy function $E$ at $c_{x_0,y_0}$.

\section{Square root of Hessian of the energy function and
Jacobi fields}

In this section, we assume 
$d(x_0,y_0)$ is smaller than the injectivity radius at
$y_0$.
We begin by determining 
$\lim_{\la\to\infty}K(c_{x_0,y_0})_{\la,t}$.
By using (\ref{loghessian 0}), we have
\begin{align}
\lim_{\la\to\infty}K(c_{x_0,y_0})_{\la,t}&=
-\lim_{\la\to\infty}\frac{1}{2\la}\overline{R(c_{x_0,y_0})_t}
+\lim_{\la\to\infty}\frac{1}{\la}\overline{V^{\la}_{y_0}(t,c_{x_0,y_0})_t}
\nonumber\\
&=\frac{1}{1-t}\lim_{\la\to\infty}
\frac{1-t}{\la}\overline{V^{\la}_{y_0}(t,c_{x_0,y_0})_t}\nonumber\\
&=-\frac{1}{1-t}\overline{\nabla^2k(c_{x_0,y_0})_t}.
\end{align}
We write
\begin{align}
K(t)&=-\frac{1} {1-t}\overline{\nabla^2k(c_{x_0,y_0})_t}.
\end{align}
It is natural to conjecture that $A(c_{x_0,y_0})_{\infty}$ is equal to
the operator in $\Ltwo$ given by
\begin{align}
\varphi(t)\mapsto
 \varphi(t)+M(t)^{\ast}\int_t^1M(s)^{\ast}K(s)\varphi(s)ds,
\label{J}
\end{align}
where
$M(t)$ is the solution to
\begin{align}
 M(t)'&=K(t)M(t)\quad \qquad 0\le t<1,\label{M and K}\\
M(0)&=I.
\end{align}
In fact, this is true and we prove it later in more general form
in Lemma~\ref{perturbation of M}.
We study the relation between the operator of (\ref{J})
and $D^2E(c_{x_0,y_0})$.
First, recall that we fix an frame $u_0\in O(M)$ at $x_0$.
Let us choose $\xi\in \RR^n$ so that
$\exp_{x_0}(tu_0(\xi))=\cxy(t)$~$(0\le t\le 1)$, where
$\exp_{x_0}$ stands for the exponential mapping at $x_0$.
Clearly it holds that $d(x_0,y_0)=|\xi|$.
Let $\cyx(t)=\cxy(1-t)$ denote the reverse geodesic path from
$y_0$ to $x_0$.
In order to see the explicit expression of
the Hessian of
$k(z)$~$(z\in \cxy)$, we recall the notion of
Jacobi fields.

Let $R$ be the curvature tensor and define $R(t)=
\overline{R(\cxy)}_t(\cdot,\xi)(\xi)$
which is a linear mapping on $\RR^n$.
Also we define $\Rreverse(t)=R(1-t)$.
Let $v\in \RR^n$ and
$W(t,v)$ be the solution to the following ODE:
\begin{equation}
W''(t,v)+\Rreverse(t)W(t,v)=0~~0\le t\le 1,
~~
W(0,v)=0,~ W'(0,v)=v.
\end{equation}
Since $t\mapsto W(t,v)$ is linear,
let $W(t)$ denote the corresponding 
$n\times n$ matrix.
Of course, $W(0)=0, W'(0)=I$.
Since ${\rm Cut}(y_0)\cap\{\cyx(t)~|~0\le t\le 1\}=\emptyset$,
$W(t)$ is an invertible linear mapping for all $0<t\le 1$ and
$\tilde{W}(t,v)=W(t)W(1)^{-1}v$ is the solution to
$$
\tilde{W}''(t,v)+\Rreverse(t)\tilde{W}(t,v)=0,~~
\tilde{W}(0,v)=0,~\tilde{W}(1,v)=v
$$
and
$(\nabla^2k(\cyx(1))(u_0v,u_0v)=(\tilde{W}'(1,v),\tilde{W}(1,v))
=(W'(1)W(1)^{-1}v,v)$.
This result can be found in many standard books
in differential geometry, {\it e.g.} \cite{jost}.
Let $0<T\le 1$.
We can obtain explicit form of the Jacobi field along
$\cyx(t)$~$(0\le t\le T)$ with given terminal value at $T$ using
$W$.
Let $\tilde{W}_T(t,v)=W(Tt)W(T)^{-1}v$.
Then $\tilde{W}_T(t,v)$~$0\le t\le 1$ satisfies 
the Jacobi equation
\begin{align}
\tilde{W}_T''(t,v)+\Rreverse(tT)T^2\tilde{W}_T(t,v)=0,~~
\tilde{W}_T(0,v)=0,~ \tilde{W}_T(1,v)=v.
\end{align}
Hence
$\nabla^2k(\cyx(t))
\left(\tau(\cxy)_{1-t}u_0v,\tau(\cxy)_{1-t}u_0v\right)
=t\left(W'(t)W(t)^{-1}v,v\right)$.

Next we prove that
$
A(t):=tW'(t)W(t)^{-1}
$
is a symmetric matrix for $0<t\le 1$.
This can be checked by the following argument.
Note that
$W(t)=tI+\int_0^t\int_0^s\int_0^rW'''(u)du$.
This follows from the equation of $W$.
By this observation, 
if we extend $A=A(t)$ by setting $A(0)=I$,
then $A(t)$ is continuously differentiable on
$[0,1]$ and $A'(0)=0$.
We have
\begin{align}
A'(t)&=W'(t)W(t)^{-1}+tW''(t)W(t)^{-1}
-tW'(t)W(t)^{-1}W'(t)W(t)^{-1}\nonumber\\
&=-t\Rreverse(t)-\frac{A(t)^2}{t}+\frac{A(t)}{t}.\label{eq for A}
\end{align}
Let $B(t)=A(t)-A(t)^{\ast}$, where
$A(t)^{\ast}$ denotes the transposed matrix.
Since $\Rreverse(t)$ is a symmetric matrix, 
(\ref{eq for A}) implies
\begin{align}
B(t)&=\frac{1}{t}\int_0^t(I-A(s)^{\ast})B(s)ds+
\frac{1}{t}\int_0^tB(s)(I-A(s))ds,
\qquad 0<t\le 1.
\end{align}
Noting 
\begin{align}
\lefteqn{
\frac{1}{t}\int_0^t(I-A(s)^{\ast})B(s)ds}\nonumber\\
&=
\frac{I-A(t)^{\ast}}{t}\int_0^tB(s)ds
+\frac{1}{t}\int_0^t\left(A(s)^{\ast}\right)'\left(\int_0^sB(r)dr\right)ds
\end{align}
and using Gronwall's inequality, we obtain 
$B(t)=0$ for all $t$ which implies the desired result.

Let $f(t)=W(1-t)$.
Then $f$ satisfies
\begin{align}
f''(t)+R(t)f(t)=0,\qquad 0\le t\le 1,\qquad  f(1)=0,~~ f'(1)=-I.
\end{align}
Since $f'(t)f(t)^{-1}$ is a symmetric matrix,
we have the following key relations:
\begin{align}
& \overline{\nabla^2k(\cxy)}_t=
-(1-t)f'(t)f(t)^{-1}\\
& K(t)=-\frac{1}{1-t}\overline{\nabla^2k(\cxy)}_t=
f'(t)f(t)^{-1}.
\end{align}

Let
\begin{align}
\tilde{K}(t)=K(t)+\frac{1}{1-t}.\label{decomposition of K}
\end{align}
Since
$\tilde{K}(t)=\frac{I-A(1-t)}{1-t}$,
we see that $\tilde{K}(t)$ 
$(0\le t\le 1)$ is a matrix-valued continuous mapping.
Let $N(t)$ be the solution to
$$
N'(t)=\tilde{K}(t)N(t),~N(0)=I.
$$
Then $\sup_t(\|N(t)\|_{op}+\|N^{-1}(t)\|_{op})<\infty$
and $M(t)=(1-t)N(t)$,
where $M(t)$ is the solution to
(\ref{M and K}).
Also we have 
$M(t)=f(t)f(0)^{-1}$.

We write $\Ltwoo=\{\varphi\in \Ltwo~|~\int_0^1\varphi(t)dt=0\}$.
Then
$
\left(U\varphi\right)(t)=\int_0^t\varphi(s)ds
$
is a bijective linear isometry from
$\Ltwoo$ to $\Ho$.
Also $U^{-1}h(t)=\dot{h}(t)$.
Let us introduce an operator
\begin{align}
\left(S\varphi\right)(t)&=\varphi(t)-f'(t)f(t)^{-1}\int_0^t\varphi(s)ds,\\
{\rm D}(S)&=\Ltwoo.
\end{align}
By Hardy's inequality, 
\begin{align}
\int_0^1\left|\frac{1}{1-t}\int_t^1\varphi(s)ds\right|^2dt
\le 4\int_0^1|\varphi(s)|^2ds
\qquad \mbox{for any $\varphi\in \Ltwo$},
\end{align}
we see that $S$ is a bounded linear operator from
$\Ltwoo$ to $\Ltwo$.
The following lemma shows that
$S$ is a square root of the Hessian of
the energy function $E$.
This relation is key to identify the limit
of $\dirichlet$.

\begin{lem}\label{S and T1}
Let $T$ be the bounded linear operator on
$\Ltwoo$ such that
\begin{align}
(T\varphi)(t)=-\int_t^1R(s)\left(\int_0^s\varphi(u)du\right)ds+
\int_0^1\left(\int_t^1R(s)\Bigl(\int_0^s\varphi(u)du\Bigr)ds\right)dt.
\end{align}
Then $T$ is a symmetric operator and for any $\varphi\in \Ltwoo$,
\begin{equation}
\|S\varphi\|^2=
((I+T)\varphi,\varphi),\label{S and T}
\end{equation}
where $I$ denotes the identity operator on $\Ltwoo$.
Moreover,
\begin{equation}
(D_0^2E)(\cxy)=U\left(I+T\right)U^{-1},
\end{equation}
where $E$ is the energy function of the path~$(\ref{energy function})$.
\end{lem}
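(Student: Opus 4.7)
The plan is to establish the three assertions in order: symmetry of $T$, the quadratic form identity $\|S\varphi\|^2 = ((I+T)\varphi,\varphi)$, and finally the identification $(D_0^2E)(\cxy) = U(I+T)U^{-1}$ through the classical index-form formula.

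First I would simplify $(T\varphi,\psi)_{\Ltwoo}$ for $\varphi,\psi\in\Ltwoo$ by Fubini. The constant-in-$t$ piece in $T\varphi(t)$ (the second term of the definition) contributes $c\cdot\int_0^1\psi(t)\,dt$, which vanishes because $\psi\in\Ltwoo$. Swapping the order of integration in the remaining double integral and using $\int_0^s\psi(t)\,dt = U\psi(s)$ yields
\begin{align*}
(T\varphi,\psi)_{\Ltwoo} = -\int_0^1\bigl(R(s)U\varphi(s),U\psi(s)\bigr)\,ds.
\end{align*}
The endomorphism $R(s) = \overline{R(\cxy)}_s(\cdot,\xi)\xi$ of $\RR^n$ is symmetric: the Riemann tensor symmetries $R(X,Y,Z,W)=R(Z,W,X,Y)$ combined with the antisymmetries and the first Bianchi identity give $\bigl(R(V,\xi)\xi,W\bigr) = \bigl(R(W,\xi)\xi,V\bigr)$, and parallel transport preserves inner products so the symmetry persists under the trivialization. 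Hence $(T\varphi,\psi) = (T\psi,\varphi)$.

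Next I would prove the quadratic form identity. Write $h = U\varphi$ and $g(t) = f'(t)f(t)^{-1}$. Since $S\varphi = \varphi - gh$,
\begin{align*}
\|S\varphi\|^2 = \|\varphi\|^2 - 2\int_0^1(\varphi,gh)\,dt + \int_0^1|gh|^2\,dt.
\end{align*}
Two facts are needed: (a) $g$ is a symmetric matrix at each $t$, and (b) the Riccati identity $g' + g^2 + R = 0$. Fact (a) follows by setting $\Phi(t)=(f')^{\ast}f - f^{\ast}f'$, computing $\Phi'(t) = f^{\ast}(R-R^{\ast})f = 0$ using the Jacobi equation $f''+Rf=0$ and symmetry of $R$, and observing $\Phi(1)=0$ because $f(1)=0$; then $\Phi\equiv 0$ gives $(f'f^{-1})^{\ast}=f'f^{-1}$. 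Fact (b) is immediate from $f''+Rf=0$. Using symmetry of $g$, integration by parts over $[0,1-\varepsilon]$ gives
\begin{align*}
-2\int_0^{1-\varepsilon}(\varphi,gh)\,dt = \int_0^{1-\varepsilon}(g'h,h)\,dt - \bigl(g(1-\varepsilon)h(1-\varepsilon),h(1-\varepsilon)\bigr).
\end{align*}
Because $\varphi\in\Ltwoo$ forces $h(1)=h(0)=0$, and because $g(t)\sim -(1-t)^{-1}I$ as $t\to 1$ while $h(t)=O(1-t)$ for smooth $\varphi$, the boundary term vanishes as $\varepsilon\downarrow 0$. Substituting $g' = -R-g^2$ and invoking symmetry again yields
\begin{align*}
\|S\varphi\|^2 = \|\varphi\|^2 - \int_0^1(R(t)h(t),h(t))\,dt = \|\varphi\|^2 + (T\varphi,\varphi) = ((I+T)\varphi,\varphi).
\end{align*}
For general $\varphi\in\Ltwoo$, one extends by density: Hardy's inequality bounds $\|gh\|_{\Ltwo}$ in terms of $\|\varphi\|$, so both sides of the identity are continuous on $\Ltwoo$.

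For the final assertion I would invoke the classical formula for the Hessian of the energy functional at a geodesic: for $X$ a vector field along $\cxy$ with $X(0)=X(1)=0$, writing $h(t) = u_0^{-1}\tau(\cxy)_t^{-1}X(t)\in\Ho$,
\begin{align*}
(D_0^2E)(\cxy)(X,X) = \int_0^1|h'(t)|^2\,dt - \int_0^1(R(t)h(t),h(t))\,dt.
\end{align*}
Setting $h=U\varphi$ with $\varphi\in\Ltwoo$, the right side equals $\|\varphi\|^2 + (T\varphi,\varphi) = ((I+T)\varphi,\varphi)_{\Ltwoo}$. Since $U:\Ltwoo\to\Ho$ is an isometry, this equals $(U(I+T)U^{-1}h,h)_{\Ho}$, which gives the stated operator identity.

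The main obstacle is the singular behavior of $g = f'f^{-1}$ at $t=1$ caused by $f(1)=0$; justifying the integration by parts and the vanishing of boundary terms depends precisely on the constraint $\varphi\in\Ltwoo$ (which forces $h(1)=0$), and this is the delicate point in an otherwise essentially computational argument.
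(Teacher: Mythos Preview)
Your proof is correct and follows essentially the same route as the paper: both expand $\|S\varphi\|^2$, integrate the cross term by parts using the symmetry of $g=f'f^{-1}$, apply the Riccati identity $g'+g^2+R=0$ to reduce to $\|\varphi\|^2-\int_0^1(R(t)h(t),h(t))\,dt$, and then invoke the second variation formula for $E$. One small difference: the paper handles the singular boundary term at $t=1$ directly for all $\varphi\in\Ltwoo$ via the Cauchy--Schwarz bound $\frac{1}{1-t}\bigl|\int_t^1\varphi\bigr|^2\le\int_t^1|\varphi|^2\to 0$, rather than first treating smooth $\varphi$ and then passing to the closure by density; also, the symmetry of $f'f^{-1}$ is established in the paper just before the lemma by a Gronwall argument on $A(t)=tW'(t)W(t)^{-1}$, whereas your Wronskian argument ($\Phi=(f')^{\ast}f-f^{\ast}f'$ is constant and vanishes at $t=1$) is a clean alternative.
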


\begin{proof}
The symmetry of $T$ follows from direct calculation.
Using 
\begin{align*}
 \lim_{t\to 1}\frac{1}{1-t}\left|\int_t^1\varphi(s)ds\right|^2=0,
\quad &
f''(t)=-R(t)f(t),
\quad
\mbox{$f'(t)f(t)^{-1}$ is symmetric},\\
(f(t)^{-1})'&=-f(t)^{-1}f'(t)f(t)^{-1},
\end{align*}
we have
\begin{align}
\|S\varphi\|^2&=\|\varphi\|^2-2\int_0^1\left(
f'(t)f(t)^{-1}\int_0^t\varphi(s)ds,\varphi(t)\right)dt\nonumber\\
& +\int_0^1\left|f'(t)f(t)^{-1}\int_0^t\varphi(s)ds\right|^2dt\nonumber\\
&=\|\varphi\|^2+\int_0^1\left(
\left(f'(t)f(t)^{-1}\right)'\int_0^t\varphi(s)ds,
\int_0^t\varphi(s)ds\right)dt\nonumber\\
&+\int_0^1\left|f'(t)f(t)^{-1}\int_0^t\varphi(s)ds\right|^2dt\nonumber\\
&=
\|\varphi\|^2-\int_0^1
\left(R(t)\int_0^t\varphi(s)ds,\int_0^t\varphi(s)ds\right)
dt\nonumber\\
&=\left((I+T)\varphi,\varphi\right).\label{hessian and R}
\end{align}
By the second variation formula of the energy function
along geodesics (\cite{jost}),
we have
\begin{align*}
 (D_0^2E)(\cxy)(U\varphi,U\varphi)=\left((I+T)\varphi,\varphi\right).
\end{align*} 
Thus the proof is completed.
\end{proof}

Let
\begin{equation}
(S_2\varphi)(t)=\varphi(t)+f'(t)\int_0^tf(s)^{-1}\varphi(s)ds.
\end{equation}
Then again by Hardy's inequality
$S_2$ is a bounded linear operator on $\Ltwo$.
Moreover, it is easy to see that
$\Im(S_2)\subset \Ltwoo$,
$SS_2=I_{\Ltwo}$ and $S_2S=I_{\Ltwoo}$.
Therefore, $S_2=S^{-1}$ and
$\Im(S)=\Ltwo$.
Moreover we have 
$S^{\ast}S=I+T$ on $\Ltwoo$ by (\ref{S and T}).
Note that by identifying the dual space of a Hilbert space
with the Hilbert space itself using Riesz's theorem,
we view $S^{\ast} : (\Ltwo)^{\ast} \to (\Ltwoo)^{\ast}$ as
the operator from $\Ltwo$ to $\Ltwoo$.
We have the following explicit expression
of $S^{-1}$, $S^{\ast}$ and $(S^{-1})^{\ast}$.

\begin{lem}\label{S explicit form}
$(1)$~$S^{-1} : \Ltwo\to \Ltwoo$, 
$S^{\ast} : \Ltwo\to \Ltwoo$ are bijective linear maps and
we have for any $\varphi\in \Ltwo$,
\begin{align}
\left(S^{-1}\varphi\right)(t)&=
\varphi(t)+f'(t)\int_0^tf(s)^{-1}\varphi(s)ds\\
\left(S^{\ast}\varphi\right)(t)&=
\varphi(t)-\int_0^1\varphi(t)dt+\int_0^tf'(s)f(s)^{-1}\varphi(s)ds
\nonumber\\
&-\int_0^1\left(\int_0^tf'(s)f(s)^{-1}\varphi(s)ds\right)dt.
\end{align}

\noindent
$(2)$ $(S^{-1})^{\ast}$ is a bijective linear map 
from $\Ltwoo$ to $\Ltwo$.
If we define $(S^{-1})^{\ast}$ is equal to $0$ on the subset
of constant functions, then for any
$\varphi\in \Ltwo$,
\begin{align}
\left((S^{-1})^{\ast}\varphi\right)(t)&=
\varphi(t)+
\left(f(t)^{\ast}\right)^{-1}\int_t^1
f(s)^{\ast}f'(s)f(s)^{-1}\varphi(s)ds.\label{Sinverseast}
\end{align}
Also $(S^{-1})^{\ast}\varphi$ can be written using $M(t)$ and $K(t)$
as
\begin{equation}
\left((S^{-1})^{\ast}\varphi\right)(t)=
\varphi(t)+
(M(t)^{\ast})^{-1}\int_t^1M(s)^{\ast}K(s)\varphi(s)ds.
\end{equation}
\end{lem}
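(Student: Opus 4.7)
The plan is to verify the three explicit formulas by direct computation, relying on two facts established earlier in the paper: the operator $S_2$ built before the lemma is a two-sided inverse of $S$, and the matrix $f'(t)f(t)^{-1}$ is symmetric (this is how one deduced $A(t)=tW'(t)W(t)^{-1}$ is symmetric, and $f(t)=W(1-t)$ inherits the property).

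First I would dispense with $S^{-1}$: the formula displayed is literally $S_2\varphi$, and the identities $SS_2=I_{\Ltwo}$, $S_2S=I_{\Ltwoo}$ noted just before the lemma give both $S^{-1}=S_2$ and the bijectivity of $S^{-1}:\Ltwo\to\Ltwoo$ for free. Next, for $S^{\ast}$, I would start from the pairing
\[
\langle S\varphi,\psi\rangle_{\Ltwo}=\int_0^1(\varphi(t),\psi(t))\,dt-\int_0^1\left(f'(t)f(t)^{-1}\!\int_0^t\varphi(s)\,ds,\;\psi(t)\right)dt
\]
with $\varphi\in\Ltwoo$ and $\psi\in\Ltwo$, exchange order of integration in the double integral, and use the symmetry $(f'(t)f(t)^{-1})^{\ast}=f'(t)f(t)^{-1}$ to collect everything against $\varphi(s)$. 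This produces the preliminary expression $\psi(t)-\int_t^1 f'(s)f(s)^{-1}\psi(s)\,ds$; rewriting $-\int_t^1=\int_0^t-\int_0^1$ and then subtracting the overall average (which is legitimate because $\varphi\in\Ltwoo$ annihilates constants, and which is necessary so that the output lies in $\Ltwoo$) gives exactly the formula in the statement.

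For part (2), the identification $(S^{-1})^{\ast}=(S^{\ast})^{-1}:\Ltwoo\to\Ltwo$ makes bijectivity automatic. To obtain the explicit form I would again compute directly: pairing $\langle S^{-1}\varphi,\psi\rangle_{\Ltwoo}$ for $\varphi\in\Ltwo$, $\psi\in\Ltwoo$, expanding $S^{-1}\varphi(t)=\varphi(t)+f'(t)\int_0^tf(s)^{-1}\varphi(s)\,ds$, and swapping the order of integration yields
\[
((S^{-1})^{\ast}\psi)(t)=\psi(t)+(f(t)^{\ast})^{-1}\!\int_t^1 f'(s)^{\ast}\psi(s)\,ds.
\]
The key algebraic identity $f'(s)^{\ast}=f(s)^{\ast}f'(s)f(s)^{-1}$, obtained by transposing $f'(s)f(s)^{-1}=(f(s)^{\ast})^{-1}f'(s)^{\ast}$, converts this into the form (\ref{Sinverseast}). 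Finally, the rewriting in terms of $M$ and $K$ follows from $M(t)=f(t)f(0)^{-1}$ and $K(t)=f'(t)f(t)^{-1}$: in the product $(M(t)^{\ast})^{-1}M(s)^{\ast}K(s)$ the factors $f(0)^{\ast}$ and $(f(0)^{\ast})^{-1}$ cancel, leaving exactly $(f(t)^{\ast})^{-1}f(s)^{\ast}f'(s)f(s)^{-1}$.

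The whole argument is algebraic manipulation; the only delicate bookkeeping is the projection onto $\Ltwoo$ when defining $S^{\ast}$ (subtract the average) and the convention for extending $(S^{-1})^{\ast}$ to $\Ltwo$ by zero on constants, both of which are forced by the choice of domains/codomains and by $\Ltwoo$ being the orthogonal complement of the constants. The main place where one must be careful is in applying Fubini and keeping track of the adjoint of $f'(t)$ versus $f'(t)f(t)^{-1}$; using the symmetry relation at the right moment is what makes the final expressions collapse to the stated form.
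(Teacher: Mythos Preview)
Your proposal is correct and follows essentially the same route as the paper. The paper only writes out the computation for $(S^{-1})^{\ast}$ explicitly, remarking that the other cases are ``almost similar,'' and in that computation it moves the kernel across the pairing via an integration by parts (recognizing $-f(t)^{\ast}f'(t)f(t)^{-1}\psi(t)$ as the derivative of $\int_t^1 f(s)^{\ast}f'(s)f(s)^{-1}\psi(s)\,ds$) rather than by a Fubini swap; but this is the same manipulation in a different guise, and your use of the symmetry identity $f'(s)^{\ast}=f(s)^{\ast}f'(s)f(s)^{-1}$ is exactly what the paper invokes. One small caution: in your derivation of $S^{\ast}$ the intermediate split $-\int_t^1=\int_0^t-\int_0^1$ is formal, since $\int_0^1 f'(s)f(s)^{-1}\psi(s)\,ds$ need not converge for general $\psi\in\Ltwo$; it is cleaner to keep the well-defined expression $\psi(t)-\int_t^1 f'(s)f(s)^{-1}\psi(s)\,ds$ and project it onto $\Ltwoo$ directly, which yields the stated formula without passing through a possibly divergent constant.
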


\begin{proof}
All the calculation are almost similar and so
we show how to calculate $(S^{-1})^{\ast}$ only.
Using $(f'(t)f(t)^{-1})^{\ast}=f'(t)f(t)^{-1}$, 
we have for $\varphi\in \Ltwo$ 
and $\psi\in \Ltwo$,
\begin{align}
& \left(S^{-1}\varphi,\psi\right)_{\Ltwo}\nonumber\\
& \quad=(\varphi,\psi)-\int_0^1
\Bigg\langle\int_0^tf(s)^{-1}\varphi(s)ds, 
\left(
\int_t^1f(s)^{\ast}f'(s)f(s)^{-1}\psi(s)ds\right)'
\Bigg\rangle dt\nonumber\\
&\quad =(\varphi,\psi)+
\int_0^1\Big\langle
\varphi(t), \left(f(t)^{-1}\right)^{\ast}
\int_t^1f(s)^{\ast}f'(s)f(s)^{-1}\psi(s)ds
\Big\rangle dt.
\end{align}
This shows (\ref{Sinverseast})
and $(S^{-1})^{\ast}\mbox{const}=0$.
\end{proof}

We summarize the relation between $S$ and $T$ 
in the proposition below.

\begin{pro}\label{S and T2}
$(1)$~
We have
\begin{align*}
I+T=S^{\ast}S,\quad
(S^{-1})^{\ast}(I+T)=S,\quad
(I+T)^{-1}=S^{-1}(S^{-1})^{\ast}.
\end{align*}

\noindent
$(2)$~The following identities hold.
\begin{align}
\inf\sigma(I+T)=\inf
\left\{\|S\varphi\|^2~|~\|\varphi\|_{\Ltwo}=1,
\varphi\in \Ltwoo\right\}=
\frac{1}{\|(S^{-1})^{\ast}\|_{op}^2}.
\end{align}
\end{pro}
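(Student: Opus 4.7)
\smallskip

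\noindent\textbf{Proof proposal.}
The plan is to read off everything in Proposition~\ref{S and T2} from the quadratic form identity
$\|S\varphi\|^2 = ((I+T)\varphi,\varphi)$ established in Lemma~\ref{S and T1} and from the fact, proved just before that lemma, that $S : \Ltwoo \to \Ltwo$ is a bijective bounded linear operator with bounded inverse $S_2=S^{-1}$. The only delicate point is keeping track of which Hilbert space each operator acts on: $S : \Ltwoo \to \Ltwo$, $S^{-1}:\Ltwo\to\Ltwoo$, $S^{\ast}:\Ltwo\to\Ltwoo$ and $(S^{-1})^{\ast}:\Ltwoo\to\Ltwo$.

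For part~(1), the identity $\|S\varphi\|^2=((I+T)\varphi,\varphi)$ is a real quadratic form equality on $\Ltwoo$. Since both $I+T$ and $S^{\ast}S$ are bounded symmetric operators on $\Ltwoo$, polarization gives $I+T=S^{\ast}S$ as an equality of operators on $\Ltwoo$. From this the remaining two identities are algebraic: multiplying on the left by $(S^{-1})^{\ast}$ yields $(S^{-1})^{\ast}(I+T)=(S^{-1})^{\ast}S^{\ast}S=(SS^{-1})^{\ast}S=I_{\Ltwo}^{\ast}S=S$, and since $S^{\ast}S$ is invertible (because $S$ is and $(S^{\ast})^{-1}=(S^{-1})^{\ast}$), taking inverses gives $(I+T)^{-1}=S^{-1}(S^{\ast})^{-1}=S^{-1}(S^{-1})^{\ast}$.

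For part~(2), the first equality is the Rayleigh quotient characterization of $\inf\sigma(I+T)$: since $I+T$ is a bounded self-adjoint operator on $\Ltwoo$,
\begin{equation*}
\inf\sigma(I+T)=\inf_{\|\varphi\|=1,\,\varphi\in\Ltwoo}((I+T)\varphi,\varphi)=\inf_{\|\varphi\|=1,\,\varphi\in\Ltwoo}\|S\varphi\|^2,
\end{equation*}
using the quadratic form identity. For the second equality, bijectivity of $S:\Ltwoo\to\Ltwo$ allows the substitution $\psi=S\varphi$, so
\begin{equation*}
\inf_{\|\varphi\|=1,\,\varphi\in\Ltwoo}\|S\varphi\|^2=\left(\sup_{\psi\ne 0,\,\psi\in\Ltwo}\frac{\|S^{-1}\psi\|}{\|\psi\|}\right)^{-2}=\frac{1}{\|S^{-1}\|_{op}^2},
\end{equation*}
and the general identity $\|A\|_{op}=\|A^{\ast}\|_{op}$ for a bounded operator between Hilbert spaces gives $\|S^{-1}\|_{op}=\|(S^{-1})^{\ast}\|_{op}$.

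The argument is essentially bookkeeping; the only step where care is needed is to note that $S^{\ast}$ and $(S^{-1})^{\ast}$ are genuinely mutually inverse between $\Ltwo$ and $\Ltwoo$ (which is why the identity $(SS^{-1})^{\ast}=I_{\Ltwo}$ collapses as above and $(I+T)^{-1}$ lands in the right space). I expect no serious obstacle; the work has already been done in Lemma~\ref{S and T1} and Lemma~\ref{S explicit form}.
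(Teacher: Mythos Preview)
Your proposal is correct and follows essentially the same route as the paper's proof, which is extremely terse (it simply notes that $I+T=S^{\ast}S$ follows from Lemma~\ref{S and T1}, that $(I+T)^{-1}=S^{-1}(S^{-1})^{\ast}$ follows from $(S^{-1})^{\ast}=(S^{\ast})^{-1}$, and that (2) follows from (1)). Your write-up spells out the polarization step, the algebra for the middle identity, and the Rayleigh quotient and substitution argument for (2), all of which is exactly what the paper leaves implicit.
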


\begin{proof}
$I+T=S^{\ast}S$ follows from Lemma~\ref{S and T1}.
$(I+T)^{-1}=S^{-1}(S^{-1})^{\ast}$ follows from
$(S^{-1})^{\ast}=(S^{\ast})^{-1}$.
(2) follows from (1).
\end{proof}

The identity
$\|S\varphi\|^2=\left((I+T)\varphi,\varphi\right)$
~$(\varphi\in {\Ltwoo})$
is used to prove the upper bound estimate, while
the inequality $\inf\sigma((I+T))\le \frac{1}{\|(S^{-1})^{\ast}\|_{op}^2}$
is used for the proof of the lower bound estimate
in Theorem~\ref{main theorem 1}.
See (\ref{varphiep}) and (\ref{lbd of F}).

\section{Proof of Theorem~\ref{main theorem 1}}\label{proof of main
theorem 1}

We prove Theorem~\ref{main theorem 1}.
So we assume that ${\cal D}$ satisfies conditions (1), (2)
in the theorem
throughout this Section.
As explained already, furthermore,
we may assume $M$ is diffeomorphic to
$\RR^n$ and the Riemannian metric is flat outside a compact set.
Therefore, Assumptions~\ref{assumption A}, \ref{assumption B},
\ref{assumption D} are satisfied.

We consider the ground state function of $L_{\la}$.
Let $\tilde{\chi}_{\delta}(\gamma)=
\chi_{\delta}\Bigl(\max_{0\le t\le 1}d(\gamma(t),\cxy(t))\Bigr)$,
where $\chi_{\delta}$ is a non-negative smooth function such that 
$\chi_{\delta}(u)=1$ for $|u|\le \delta$ and $\chi_{\delta}(u)=0$
for $|u|\ge 2\delta$.
Here $\delta$ is a sufficiently small positive number.
Note that there exists $C_{\delta}>0$ such that
$\nu^{\la}_{x_0,y_0}\left(\max_{0\le t\le 1}d(\gamma(t),\cxy(t))\ge
\delta\right)
\le e^{-\la C_{\delta}}$.
This can be proved by a large deviation result for 
solutions of SDE.
Since the proof is similar to that of
(\ref{rough path exponential decay}),
we omit the proof.

Thus $\|\tilde{\chi}_{\delta}\|_{L^2(\nu^{\la}_{x_0,y_0})}\ge
1-Ce^{-C'\la}$.
Also we have
$
\|D_0\tilde{\chi}_{\delta}\|_{L^2(\nu^{\la}_{x_0,y_0})}
\le Ce^{-C'\la}.
$
Here we have used that the function 
$q(\gamma)=\max_{0\le t\le 1}d(\gamma(t),\cxy(t))$
belongs to ${\rm D}(\E^{\la})$ and
$|D_0q(\gamma)|\le 1$ $\nu^{\la}_{x_0,y_0}$-a.s. $\gamma$.
This is proved in a similar way to Lemma 2.2 (2) in \cite{aida-coh}.
Hence 
\begin{align}
e^{\la}_{Dir,1,{\mathcal D}}\le 
Ce^{-\la C'}.\label{estimate on e1}
\end{align}
On the other hand, it is proved in \cite{aida-coh2} that
$\liminf_{\la\to\infty}\frac{e^{\la}_{Dir,2,{\cal D}}}{\la}>0$.
In \cite{aida-coh2}, we studied the case of compact manifolds.
However, the proof works as well as the present case by the 
assumption on $M$.
These estimates imply 
that $e^{\la}_{Dir,1,{\cal D}}$ is a simple eigenvalue.
Let $\Psi_{\la}$ denote the normalized non-negative
eigenfunction (ground state function).
It is clear that 
$\Psi_{\la}\in H^{1,2}_0\left({\mathcal
D},{\nu}^{\la}_{x_0,y_0}\right)$.
From (\ref{estimate on e1}), we obtain
$\|D_0\Psi_{\la}\|_{L^2(\nu^{\la}_{x_0,y_0})}\le Ce^{-C'\la}$.
It is plausible that $\Psi_{\la}$ is strictly positive for
$\nu^{\la}_{x_0,y_0}$ almost all $\gamma$
which 
follows from the 
positivity improving property of the corresponding
$L^2$-semigroup.
However, we do not need such a property in this paper
and we do not consider such a problem.

We use the following representation of $\dirichlet$
to prove ${\rm LHS}\le {\rm RHS}$ in (\ref{main theorem 1 identity}) in
Theorem~\ref{main theorem 1}.
\begin{align}
e^{\la}_{Dir,2,{\mathcal D}}&=
\inf\Biggl\{
\frac{\int_{{\mathcal D}}
|D_0(F-(\Psi_{\la},F)\Psi_{\la})|^2d\nu^{\la}_{x_0,y_0}}
{\|F-(\Psi_{\la},F)\Psi_{\la}\|_{L^2(\nu^{\la}_{x_0,y_0})}^2}
~\Bigg |
~F\in H^{1,2}_0({\mathcal D})~\nonumber\\
& \qquad\mbox{and}~
\|F-(\Psi_{\la},F)\Psi_{\la}\|_{L^2(\nu^{\la}_{x_0,y_0})}\ne 0
\Biggr\}.\label{representation of e2}
\end{align}

The following estimate is necessary for the proof of Theorem~\ref{main theorem 1}.

\begin{lem}\label{ground state}
We have
\begin{equation}
 \|\Psi_{\la}-1\|_{L^2(P_{x_0,y_0}(M),\nu^{\la}_{x_0,y_0})}\le C e^{-C'\la},
\end{equation}
where $C,C'$ are positive constants.
\end{lem}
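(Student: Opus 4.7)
The plan is a routine spectral decomposition of the cut-off $\tilde{\chi}_{\delta}$ in the eigenbasis of $-L_{\la}$. The inputs are: (a) the two estimates $\|\tilde{\chi}_{\delta}-1\|_{L^2(\nu^{\la}_{x_0,y_0})}\le Ce^{-C'\la}$ and $\|D_0\tilde{\chi}_{\delta}\|_{L^2(\nu^{\la}_{x_0,y_0})}\le Ce^{-C'\la}$ already recorded before the statement (with $\delta$ chosen so small that the $\delta$-tube around $\cxy$ sits inside ${\cal D}$, ensuring $\tilde{\chi}_{\delta}\in H^{1,2}_0({\cal D})$); (b) the upper bound $e^{\la}_{Dir,1,{\cal D}}\le Ce^{-C'\la}$ in $(\ref{estimate on e1})$; (c) the simplicity of $e^{\la}_{Dir,1,{\cal D}}$, established just above; and (d) the lower bound $\liminf_{\la\to\infty}\dirichlet/\la>0$ from \cite{aida-coh2}, which together with (c) gives the standard min-max identity $\dirichlet=\inf\{\E^{\la}(F,F)/\|F\|^{2}:F\in H^{1,2}_0({\cal D}),\,(F,\Psi_{\la})_{L^2}=0\}$.

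First I normalize $\hat{\chi}_{\delta}:=\tilde{\chi}_{\delta}/\|\tilde{\chi}_{\delta}\|_{L^2(\nu^{\la}_{x_0,y_0})}\in H^{1,2}_0({\cal D})$; elementary manipulations then give $\|\hat{\chi}_{\delta}-\tilde{\chi}_{\delta}\|_{L^2}\le Ce^{-C'\la}$ and $\E^{\la}(\hat{\chi}_{\delta},\hat{\chi}_{\delta})\le Ce^{-C'\la}$. I decompose $\hat{\chi}_{\delta}=a\Psi_{\la}+\psi^{\perp}$, where $a=(\hat{\chi}_{\delta},\Psi_{\la})_{L^2(\nu^{\la}_{x_0,y_0})}\ge 0$ (both factors are non-negative) and $\psi^{\perp}\in H^{1,2}_0({\cal D})$ is $L^2$-orthogonal to $\Psi_{\la}$. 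Pythagoras yields $a^{2}+\|\psi^{\perp}\|^{2}=1$, and since $\E^{\la}(\Psi_{\la},\psi^{\perp})=e^{\la}_{Dir,1,{\cal D}}(\Psi_{\la},\psi^{\perp})_{L^{2}}=0$, also $\E^{\la}(\hat{\chi}_{\delta},\hat{\chi}_{\delta})=a^{2}e^{\la}_{Dir,1,{\cal D}}+\E^{\la}(\psi^{\perp},\psi^{\perp})$. Applying the min-max identity to $\psi^{\perp}$ gives, for all sufficiently large $\la$,
\begin{equation*}
c\la\|\psi^{\perp}\|^{2}\le \E^{\la}(\psi^{\perp},\psi^{\perp})\le\E^{\la}(\hat{\chi}_{\delta},\hat{\chi}_{\delta})\le Ce^{-C'\la},
\end{equation*}
so that $\|\psi^{\perp}\|^{2}\le Ce^{-C'\la}$ and $a\ge\sqrt{1-\|\psi^{\perp}\|^{2}}\ge 1-Ce^{-C'\la}$.

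Collecting, $\|\hat{\chi}_{\delta}-\Psi_{\la}\|_{L^{2}}^{2}=(a-1)^{2}+\|\psi^{\perp}\|^{2}\le Ce^{-C'\la}$, and the triangle inequality with $\|\hat{\chi}_{\delta}-\tilde{\chi}_{\delta}\|_{L^{2}}\le Ce^{-C'\la}$ and $\|\tilde{\chi}_{\delta}-1\|_{L^{2}}\le Ce^{-C'\la}$ finishes the argument. No step is delicate; the only book-keeping point is that the $L^{2}$-norms live on all of $P_{x_0,y_0}(M)$, but $\Psi_{\la}$, $\hat{\chi}_{\delta}$ and $\tilde{\chi}_{\delta}$ all vanish outside ${\cal D}$, so the contribution of ${\cal D}^{c}$ enters only through $\|\tilde{\chi}_{\delta}-1\|_{L^{2}}^{2}$, where it is controlled by the large-deviation tail $\nu^{\la}_{x_0,y_0}({\cal D}^{c})\le Ce^{-C'\la}$ recalled immediately before the statement.
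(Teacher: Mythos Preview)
Your argument is correct. It differs, however, from the paper's proof, which is shorter: the paper applies the Poincar\'e inequality~(\ref{poincare1}) (a direct consequence of the COH formula in Lemma~\ref{coh formula}) to $F=\Psi_{\la}$ itself. Since $D_0\Psi_{\la}$ vanishes outside ${\cal D}$ and $\|D_0\Psi_{\la}\|_{L^2}^2=e^{\la}_{Dir,1,{\cal D}}\le Ce^{-C'\la}$, one gets immediately $\|\Psi_{\la}-(\Psi_{\la},1)\|_{L^2}\le Ce^{-C'\la}$; a two-line algebraic manipulation (using $\|\Psi_{\la}\|=1$ and $(\Psi_{\la},1)\ge 0$) then gives $\|\Psi_{\la}-1\|_{L^2}\le Ce^{-C'\la}$.

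Your route---spectral decomposition of the trial function $\hat{\chi}_{\delta}$ using the gap $\dirichlet\ge c\la$---is more ``abstract'' and would work in any setting where one has a good trial function and a quantitative gap, without needing a Poincar\'e inequality against the constant function. Here, though, the gap $\dirichlet\ge c\la$ that you invoke is itself proved in \cite{aida-coh2} via precisely the inequality~(\ref{poincare1}), so ultimately both arguments rest on the same COH estimate; the paper's version simply applies it more directly.
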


\begin{proof}
By the COH formula,
$$
\|\Psi_{\la}-\left(\Psi_{\la},1\right)
_{L^2({\nu}^{\la}_{x,y})}\|_{L^2(\nu^{\la}_{x_0,y_0})}
\le Ce^{-C'\la}.
$$
This implies 
$$
1-\left(\Psi_{\la},1\right)_{L^2({\nu}^{\la}_{x,y})}^2
=\left(\Psi_{\la},\Psi_{\la}-\left(\Psi_{\la},1\right)_{L^2({\nu}^{\la}_{x,y})}\right)
_{L^2(\nu^{\la}_{x_0,y_0})}
\le Ce^{-C'\la}
$$
which shows
$\|\Psi_{\la}-1\|_{L^2(P_{x_0,y_0}(M),\nu^{\la}_{x_0,y_0})}^2\le
2Ce^{-C'\la}$.
\end{proof}

We need the following lemma to prove that
$A(\gamma)_{\la}$ can be approximated by $A(\cxy)_{\infty}(=(S^{-1})^{\ast})$ 
when $\gamma$ is close to $\cxy$ and $\la$ is large.

\begin{lem}\label{perturbation of M}
Recall that we have defined
\begin{align}
 K(t)=-\frac{\overline{\nabla^2k(c_{x_0,y_0})}_t}{1-t}.
\end{align}
We consider a perturbation
of $K(t)$ such that
$$
K_{\ep}(t)=K(t)+\frac{C_{\ep}(t)}{(1-t)^{\delta}},
$$
where $0<\delta<1$ is a constant and
$C_{\ep}(t)$~$(0\le \ep\le 1)$ is a symmetric matrix-valued
continuous function satisfying $\sup_{t}\|C_{\ep}(t)\|\le \ep$.
Let
$M_{\ep}(t)$ be the solution to
\begin{align}
M_{\ep}'(t)&=K_{\ep}(t)M_{\ep}(t)\quad 0\le t<1,\\
M_{\ep}(0)&=I.
\end{align}
Define
\begin{align}
(J_{\ep}\varphi)(t)&=
(M_{\ep}(t)^{\ast})^{-1}\int_t^1
M_{\ep}(s)^{\ast}K_{\ep}(s)\varphi(s)ds.
\end{align}
Then for sufficiently small $\ep$,
there exists a positive constant $C$ which is independent of
$\ep$ such that
\begin{align}
\|J_{\ep}-J_0\|_{op}\le C\ep.\label{difference of J}
\end{align}
\end{lem}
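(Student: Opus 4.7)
The strategy is to factor $M_\ep(t) = M_0(t)Q_\ep(t)$, isolating the singular behavior at $t=1$ in $M_0$ (recall $M_0(t)=(1-t)N(t)$ with $N,N^{-1}$ continuous on $[0,1]$) and treating $Q_\ep$ as a uniformly bounded $O(\ep)$-perturbation of the identity. Substituting into the ODE for $M_\ep$ and using $M_0'=KM_0$, I obtain
\begin{align}
 Q_\ep'(t) \;=\; M_0(t)^{-1}\bigl(K_\ep(t)-K(t)\bigr)M_0(t)\,Q_\ep(t)
 \;=\; \frac{N(t)^{-1}C_\ep(t)N(t)}{(1-t)^\delta}\,Q_\ep(t),\qquad Q_\ep(0)=I,
\end{align}
the crucial point being that the $(1-t)$ factors from $M_0$ and $M_0^{-1}$ cancel exactly. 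Since $\delta<1$, $(1-t)^{-\delta}$ is integrable on $[0,1]$; together with the uniform bounds on $N,N^{-1}$ and Gronwall's inequality, this yields, for small $\ep$,
\begin{align}
 \sup_{t\in[0,1)}\bigl(\|Q_\ep(t)-I\|+\|Q_\ep(t)^{-1}-I\|\bigr)\le C\ep.
\end{align}

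Substituting $M_\ep=M_0Q_\ep$ into the definition of $J_\ep$ gives
\begin{align}
 (J_\ep\varphi)(t) = (M_0(t)^*)^{-1}\bigl(Q_\ep(t)^*\bigr)^{-1}\int_t^1 Q_\ep(s)^*M_0(s)^*K_\ep(s)\varphi(s)\,ds,
\end{align}
and I would split $J_\ep - J_0$ into three pieces: (i) the contribution of $(Q_\ep(t)^*)^{-1}-I$ pulled outside the integral; (ii) the contribution of $Q_\ep(s)^*-I$ kept inside with the unperturbed $K(s)$; (iii) the contribution of $K_\ep-K=C_\ep/(1-\cdot)^\delta$. Piece (i) equals $\bigl((Q_\ep(t)^*)^{-1}-I\bigr)J_0\varphi(t)$, whose $L^2$ norm is at most $C\ep\|J_0\|_{op}\|\varphi\|_{\Ltwo}$ since $J_0=(S^{-1})^{\ast}-I$ is already known to be bounded on $\Ltwo$.

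For (ii) and (iii) the relevant pointwise estimates are
\begin{align}
 \|M_0(s)^*K(s)\|=\|(M_0'(s))^*\|\le C,\qquad
 \Bigl\|M_0(s)^*\frac{C_\ep(s)}{(1-s)^\delta}\Bigr\|\le C\ep(1-s)^{1-\delta}\le C\ep,
\end{align}
together with $\|(M_0(t)^*)^{-1}\|\le C/(1-t)$ and the uniform bound $\|Q_\ep\|+\|Q_\ep^{-1}\|\le 2$. Both error terms are then dominated pointwise by $\frac{C\ep}{1-t}\int_t^1|\varphi(s)|\,ds$, and the Hardy inequality already invoked in the paper bounds this in $L^2$ by $C\ep\|\varphi\|_{\Ltwo}$. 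Summing the three contributions gives $\|J_\ep-J_0\|_{op}\le C\ep$.

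The main obstacle is bookkeeping rather than deep analysis: one must verify that in every error term the apparent $1/(1-t)$ singularity created by $(M_0(t)^*)^{-1}$ is compensated either by the $(1-s)$ factor hidden in $M_0(s)^{\ast}=(1-s)N(s)^{\ast}$ (so that $M_0^\ast K=(M_0')^\ast$ and $M_0^\ast/(1-\cdot)^\delta$ remain bounded), and that the residual $\int_t^1|\varphi|\,ds/(1-t)$ is exactly of Hardy type. Once this algebraic cancellation is in place, the bound $\|J_\ep-J_0\|_{op}\le C\ep$ follows directly from the Gronwall estimate on $Q_\ep$ and Hardy's inequality.
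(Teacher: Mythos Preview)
Your argument is correct and rests on the same three ingredients as the paper's proof: the factorisation $M_0(t)=(1-t)N(t)$ with $N,N^{-1}$ bounded, Gronwall combined with the integrability of $(1-t)^{-\delta}$, and Hardy's inequality. The organisation differs slightly. The paper writes $M_\ep(t)=(1-t)N_\ep(t)$ and then passes to the two–parameter transition matrices $N_\ep^t(u)=N_\ep(t+u)N_\ep(t)^{-1}$, estimating $\sup_{t,u}\|N_\ep^t(u)-N_0^t(u)\|\le C\ep$ via an integral equation before invoking Hardy. Your factorisation $M_\ep=M_0Q_\ep$ (equivalently $N_\ep=NQ_\ep$) avoids the two–parameter family: a single Gronwall estimate on the globally defined $Q_\ep$ suffices, and the operator difference $J_\ep-J_0$ is split algebraically. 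This is a mild streamlining of the same idea, not a different route.

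One small inaccuracy: piece~(i) is not literally $\bigl((Q_\ep(t)^\ast)^{-1}-I\bigr)J_0\varphi(t)$, since $(Q_\ep^\ast)^{-1}$ sits between $(M_0^\ast)^{-1}$ and the integral and does not commute with $(M_0^\ast)^{-1}$. The correct factor is $(M_0(t)^\ast)^{-1}\bigl[(Q_\ep(t)^\ast)^{-1}-I\bigr]M_0(t)^\ast$ acting on $J_0\varphi(t)$; because $\|M_0(t)^\ast\|\le C(1-t)$ and $\|(M_0(t)^\ast)^{-1}\|\le C/(1-t)$, this conjugated factor still has norm $\le C\ep$ uniformly in $t$, so your bound survives unchanged.
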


By Lemma~\ref{S explicit form}, we see that
$(S^{-1})^{\ast}=I+J_0$ holds.

\begin{proof}
As already mentioned,
$\tilde{K}(t)=\frac{1}{1-t}+K(t)$ is 
a matrix-valued continuous mapping for $0\le t\le 1$.
Taking this into account, we rewrite
$$
K_{\ep}(t)=-\frac{1}{1-t}+\tilde{K}_{\ep}(t),
$$
where $\tilde{K}_{\ep}(t)=\tilde{K}(t)+\frac{C_{\ep}(t)}{(1-t)^{\delta}}$.
Let $N_{\ep}(t)$ be the solution to
\begin{align}
N_{\ep}(t)'=\tilde{K}_{\ep}(t)N_{\ep}(t)\quad 0\le t<1,\quad
 N_{\ep}(0)=I.
\label{Nept}
\end{align}
Clearly, the solution to this equation exists.
Moreover, $\lim_{t\to 1}N_{\ep}(t)$ exists
and $\sup_{0\le t<1}\|N_{\ep}(t)\|<\infty$.
To see this, we prove the continuity of
$N_{\ep}(t)$ with respect to $t$.
Note that for $0\le s\le t<1$,
\begin{align}
\|N_{\ep}(t)-N_{\ep}(s)\|&\le
\int_s^tC\left(1+\frac{1}{(1-u)^{\delta}}\right)\|N_{\ep}(u)\|du
\nonumber\\
&\le \|N_{\ep}(s)\|C\left((t-s)+
\frac{(1-t)^{1-\delta}-(1-s)^{1-\delta}}{1-\delta}\right)\nonumber\\
&\quad
 +\int_s^tC\left(1+\frac{1}{(1-u)^{\delta}}\right)\|N_{\ep}(u)-N_{\ep}(s)\|
du.
\end{align}
Hence by the Gronwall inequality, we have
\begin{align}
\|N_{\ep}(t)-N_{\ep}(s)\|&\le
\|N_{\ep}(s)\|C\left((t-s)+
\frac{(1-t)^{1-\delta}-(1-s)^{1-\delta}}{1-\delta}\right)
\nonumber\\
&\qquad
\times \exp\left\{
C
\left((t-s)+
\frac{(1-t)^{1-\delta}-(1-s)^{1-\delta}}{1-\delta}\right)
\right\}
\end{align}
which implies the desired result.
Note that $\tilde{K}_0(t)=\tilde{K}(t)$ and
$N_{0}(t)=N(t)$.
Then $M_{\ep}(t)=(1-t)N_{\ep}(t)$.
Also we have $N_{\ep}(t)$ $(0\le t<1)$ is invertible
and
\begin{align*}
N_{\ep}(s)N_{\ep}(t)^{-1}&=N_{\ep}^t(s-t) \qquad 0\le t\le s<1,
\end{align*}
where $N_{\ep}^t(u)$~$(0\le u<1-t)$
is the solution to the equation 
\begin{align*}
\partial_uN^t_{\ep}(u)=\tilde{K}_{\ep}(t+u) N^t_{\ep}(u)
\quad 0\le u<1-t, \qquad N^t_{\ep}(0)=I.
\end{align*}
By a similar calculation to $N_{\ep}$, we have
$\sup_{\ep, t, 0\le u<1-t}\|N^{t}_{\ep}(u)\|<\infty$.
By the definition of $J_{\ep}$, we have
\begin{align}
\left(J_{\ep}\varphi\right)(t)=
\frac{1}{1-t}\int_t^1(1-s)N_{\ep}^t(s-t)^{\ast}K_{\ep}(s)\varphi(s)ds.
\end{align}
Hence by Hardy's inequality,
in order to estimate $J_{\ep}-J_0$, it suffices to estimate
$N_{\ep}^t-N_0^t$.
Note that for $0\le u<1-t$,
\begin{align*}
N_{\ep}^t(u)=N_0^t(u)\left(I+\int_0^u N_0^t(\tau)^{-1}
\frac{C_{\ep}(t+\tau)}{(1-(t+\tau))^{\delta}}N_{\ep}^t(\tau)d\tau\right).
\end{align*}
This and the estimate for $C_{\ep}$ and $N^{t}_{\ep}(u)$ imply
\begin{align*}
 \sup_t|N_{\ep}^t(u)-N_0^t(u)|\le C\ep,
\end{align*}
which completes the
proof of (\ref{difference of J}).
\end{proof}

Let us apply the lemma above in the case where
$K_{\ep}(t)=K(\gamma)_{\la,t}$.
We have
\begin{align}
K(\gamma)_{\la,t}
&=K(t)+
\frac{1}{1-t}
\left(\frac{1-t}{\la}\overline{\nabla^2\log
p\left(\frac{1-t}{\la},y_0,\gamma\right)}_t+
\overline{\nabla^2k(\cxy)}_t\right)-
\frac{1}{2\la}\Ricbar\nonumber\\
&=K(t)+
\frac{1}{1-t}\left(\frac{1-t}{\la}\overline{\nabla^2\log
p\left(\frac{1-t}{\la},y_0,\gamma\right)}_t+
\overline{\nabla^2k(\gamma)}_t\right)\nonumber\\
& \quad+\frac{1}{1-t}\left(\overline{\nabla^2k(\cxy)}_t-
\overline{\nabla^2k(\gamma)}_t\right)
-\frac{1}{2\la}\Ricbar.
\end{align}
Therefore,
\begin{align}
C_{\ep}(t)&=
\frac{1}{(1-t)^{1-\delta}}
\left(
\frac{1-t}{\la}\overline{\nabla^2\log
p\left(\frac{1-t}{\la},y_0,\gamma\right)}_t
+\overline{\nabla^2k(\gamma)}_t\right)\nonumber\\
&\quad +\frac{1}{(1-t)^{1-\delta}}
\left(\overline{\nabla^2k(\cxy)}_t-\overline{\nabla^2k(\gamma)}_t\right)
\nonumber\\
&\quad -\frac{(1-t)^{\delta}}{2\la}\Ricbar.\label{Cept}
\end{align}
We need to show that if $\gamma$ and $c_{x_0,y_0}$
are close enough and $\la$ is large, then
$C_{\ep}(t)$ is small.
Then by Lemma~\ref{perturbation of M},
we obtain that $\|J(\gamma)_{\la}-(S^{-1})^{\ast}\|_{op}$
is small.
Let us check each term of
$C_{\ep}(t)$.
If $\gamma(t)\in B_l(y_0)$ for all $0\le t\le 1$,
the first term converges to $0$ by Lemma~\ref{gong-ma} (1)
as $\la\to \infty$ for $\delta>1/2$.
It is trivial to see that the third term goes to $0$.
Hence, it suffices to prove that
if $\gamma$ and $c_{x_0,y_0}$ is close enough, then
the difference
$\overline{\nabla^2k(\cxy)}_t-\overline{\nabla^2k(\gamma)}_t$
is small.
To this end, we use the results in rough path analysis.

Here, we summarize necessary results from rough path analysis.
The readers are referred to
\cite{lyons98, lq, lcl, friz-victoir, friz-hairer} for rough path analysis.
In Section~\ref{statement},
we define a Brownian motion $b$
with variance $1/\la$ on $\RR^n$ by using the stochastic parallel translation
along $\gamma$ and $b$ is a functional of
$\gamma$.
Conversely, $\gamma$ can be obtained by solving a stochastic
differential equation driven by a Brownian motion $b(t)$.
We may use notation $b_t$ instead of $b(t)$.
From now on, $\mu^{\la}$ denotes the Brownian motion measure with variance
$1/\la$.
We use the notation $\mu$ when $\la=1$.
Let $\{L_i\}_{i=1}^n$ be the canonical horizontal vector fields
and consider an SDE on $O(M)$:
\begin{align}
dr(t,u,b)&=\sum_{i=1}^nL_i(r(t,u,b))\circ db^i(t) \label{horizontal sde}\\
r(0,u,b)&=u\in O(M).
\end{align}
Let $X(t,b)=\pi(r(t,u_0,b))$.
Then the law of $X(\cdot,b)$ coincides with 
$\nu^{\la}_{x_0}$.
Also it holds that 
\begin{align}
\overline{\nabla^2 k(X(b))}_t&=
r(t,u_0,b)^{-1}
(\nabla^2k)(X(t,b))r(t,u_0,b)\qquad \mbox{$\mu^{\la}$-a.s. $b$}.
\label{nabla2 k}
\end{align}
Note that if $b$ is the anti-stochastic development of
the Brownian motion $\gamma(t)$ on $M$, 
then it holds that
$\tau(\gamma)_t=r(t,u_0,b)u_0^{-1}$ $\nu^{\la}_{x_0}$-a.s. $\gamma$.
Since we assume $M$ is diffeomorphic to
$\RR^n$, we have a global coordinate
$x=(x^i)\in \RR^n$
and the Riemannian metric
$g(x)=(g_{ij}(x))$
on the tangent space $T_xM$ which can be identified with
$\RR^n$.
Then the SDE of
$r(t,u_0,b)=(X^i(t,b), e^{k}_l(t,b))$~($e(t,b)=(e^k_l(t,b))\in GL(n,\RR)$)
can be written down explicitly (see \cite{ikeda-watanabe, hsu}) as
\begin{align}
 dX^i(t)&=e^i_j(t)\circ db^j(t) \label{frame bundle sde}\\
de^i_j(t)&=-\sum_{k,l}\Gamma^i_{kl}(X(t))e^{l}_j(t)\circ dX^k(t)
\label{frame bundle sde2}.
\end{align}
Moreover, the coefficients of the
SDE are $C^{\infty}_b$ because
the Riemannian metric is flat outside a certain compact subset.
Therefore we can apply rough path analysis and Malliavin calculus 
to the solution of
the SDE.
Now let us recall the definition of the Brownian rough path.
Let $b(N)$ be the dyadic polygonal approximation of $b$
such that $b(N)_{k2^{-N}}=b_{k2^{-N}}$
and $b(N)_t$ is linear for $k2^{-N}\le t \le (k+1)2^{-N}$
with $0\le k\le 2^{N}-1$.
Define 
$b(N)^1_{s,t}=b(N)_t-b(N)_s$,
$b(N)^2_{s,t}=\int_s^t\left(b(N)_u-b(N)_s\right)\otimes db(N)_u$
for $0\le s\le t\le 1$.
Let $\Omega$ be all elements $b$ belonging to the Wiener space
$W^n$ such that
$b(N)^1_{s,t}$ and $b(N)^2_{s,t}$ converge in the 
Besov type norm $\|\cdot\|_{4m,2\theta}$ and $\|\cdot\|_{2m,\theta}$
respectively (\cite{aida-loop group}).
Here $2/3<\theta<1$ and
$m$ is a sufficiently large positive number.
It is proved in \cite{aida-loop group} that
$\Omega^c$ is a slim set in the sense of Malliavin 
with respect to the Brownian motion measure 
$\mu$.
However, it is easy to check that
the same result holds for
the Brownian motion measure $\mu^{\la}$
with variance $1/\la$ for any $\la>0$.
Moreover, if $b\in \Omega$, then
$b+h\in \Omega$ for any element $h\in \H$.
For $b\in \Omega$, we define $b^1_{s,t}=\lim_{N\to\infty}b(N)^1_{s,t}$
and $b^2_{s,t}=\lim_{N\to\infty}b(N)^2_{s,t}$.
The triple $(1,b^1_{s,t},b^2_{s,t})$ is a
$p$-rough path $(2<p=\frac{2}{\theta}<3)$ and its control function
is given by $\omega(s,t)=C(b)|t-s|$.
$C(b)$ depends on the Besov norm of $b^1$ and $b^2$.
For $h\in \H$, 
we have, $(b+h)^1_{s,t}=b^1_{s,t}+h^1_{s,t}$
and 
$$
(b+h)^2_{s,t}=b^2_{s,t}+h^2_{s,t}+\int_s^t(b_u-b_s)\otimes dh_u+
\int_s^t(h_u-h_s)\otimes db_u.
$$
Note that solutions of rough differential equations driven by
geometric rough paths are smooth.
See Definition 7.1.1 and Corollary 7.1.1 in \cite{lq}.
Therefore, considering the composition of the two maps,
$b(\in \Omega)\mapsto (1,b^1_{s,t},b^2_{s,t})$
and the solution map between geometric rough paths,
we obtain a smooth version $r(t,u_0,b)$ of the solution to 
(\ref{frame bundle sde}) and (\ref{frame bundle sde2}).
Here smooth means 
\begin{enumerate}
 \item the mapping $b(\in \Omega)\mapsto r(t,u_0,b)$ 
is differentiable in the 
$H$-direction and smooth in the sense of Malliavin,
\item the mapping $b(\in \Omega)\mapsto r(t,u_0,b)$ is
      $\infty$-quasi-continuous
(See Theorem 3.2 in \cite{aida-loop group}).
\end{enumerate}
In the terminology of Malliavin calculus,
$r(t,u_0,b)$ is a version of redifinition of the solution
to (\ref{horizontal sde}).

By the uniform ellipticity of (\ref{frame bundle sde}),
we have the following estimate for the Malliavin covariance matrix.
For $p\ge 1$, there exists $p'>0$ such that for large $\la$,
\begin{align}
E[\left\{\det \left(DX(1,b)DX(1,b)^{\ast}\right)\right\}^{-p}] 
\le C\la^{p'}.
\end{align}
Thus the probability measure
$
d\mu^{\la}_{x_0,y_0}=\frac{\delta_{y_0}(X(1,b))d\mu^{\la}}
{c(y_0)p(1/\la,x_0,y_0)}
$
is well-defined,
where $c(y_0)=\sqrt{\det(g_{ij}(y_0))}$ and
$\delta_{y_0}$ denotes Dirac's delta function
on $\RR^n$ and 
$\delta_{y_0}(X(1,b))d\mu^{\la}$ is a generalized Wiener functional
(\cite{watanabe}).
Note that $\mu^{\la}_{x_0,y_0}$ does not charge the slim sets.
Thus the image measure
$X_{\ast}\mu^{\la}_{x_0,y_0}$ is well-defined for
smooth $X(b)$.
Moreover, we have
\begin{align}
\mbox{The joint law of
$(b, \gamma)$ under 
$\nu^{\la}_{x_0,y_0}$}
=\mbox{The joint law of $(b,X(b))$ under
$\mu^{\la}_{x_0,y_0}$}.\label{joint law}
\end{align}
This observation implies that one can use estimates on integration
with respect to (Brownian) rough paths to study the estimate on the
stochastic integrals for the pinned Brownian motion.
In the proof in Section 2, we use cut-off functions
$\chi_{1,\kappa}, \chi_{2,\kappa}$.
In our problem, the existence of such cut-off functions
is not trivial.
The existence of such an appropriate cut-off functions 
are proved in
\cite{aida-semiclassical}.
We use the following result in rough paths.
Below, $r(t,u_0,b)$ may be denoted by $r(t,b)$ for simplicity.

\begin{lem}\label{lemma from rough path}
\noindent
$(1)$~In this statement, we consider the 
smooth version $r(t,b)$ for $b\in \Omega$.
By adopting this version,
a version of $\overline{\nabla^2 k(X(b))}_t$ can be defined 
as $r(t,u_0,b)^{-1}
(\nabla^2k)(X(t,b))r(t,u_0,b)$
which is smooth in the above sense.
Let $l_{\xi}(t)=t\xi$, where
$\xi$ is chosen as
$\exp_{x_0}\left(u_0\xi\right)=y_0$.
Let us define
\begin{align}
 \Xi(b)&=\|b^1\|_{4m,\theta/2}^{4m}+
\|b^2\|_{2m,\theta}^{2m}\qquad b\in \Omega.
\end{align}
Then for any $\ep>0$, there exists $\ep'>0$
such that if 
$\Xi(b-l_{\xi})\le \ep'$
and $X(1,b)=y_0$,
\begin{align}
\left|X(t,b)-\cxy(t)\right|&\le \ep t^{\theta/2} \quad 0\le t\le 1,
\label{rough path estimate1}\\
 \left|
\overline{\nabla^2 k(X(b))}_t-
\overline{\nabla^2 k(X(l_{\xi}))}_t
\right|&\le
\ep (1-t)^{\theta/2}\quad 0\le t\le 1, \label{rough path estimate2}\\
\left|
\sup_{0\le t\le 1}\left|I^2_{t,1}(b)-I^2_{t,1}(l_{\xi})\right|
\right|&\le
 \|\varphi'\|_{\infty}\ep,\label{rough path estimate3}
\end{align}
where 
\begin{align}
I^2(b)_{s,t}=
\int_t^1\overline{R(X(b))}_s\left(\int_s^1\varphi(r)db^i(r),\ep_i\right)
\circ db(s)
\end{align}
and $\varphi\in C^1([0,1],\RR^n)$.
The integral is defined in the sense of rough paths.

\noindent
$(2)$ In this statement, let $b$ be the 
Brownian motion which is obtained by the anti-stochastic development
of the pinned Brownian motion $\gamma$.
Let $\eta$ be a $C^1_b$ function with compact support
on $\RR$.
Let
$\tilde{\eta}(\gamma)=\eta\left(\Xi(b-l_{\xi})\right)$.
Then there exists a constant $C>0$ such that for all $\la\ge 1$
\begin{align}
 |D_0\tilde{\eta}(\gamma)|_{\Ho}\le C
\quad \mbox{for $\nu^{\la}_{x_0,y_0}$-almost all $\gamma$}.
\end{align}
\end{lem}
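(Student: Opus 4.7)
For Part~(1) my plan is to invoke the continuity of the It\^o map for rough differential equations. Because the Riemannian metric has been arranged to be flat outside a compact set, the SDE~$(\ref{frame bundle sde})$--$(\ref{frame bundle sde2})$ has smooth coefficients with bounded derivatives, so its solution $r(t,u_0,b)$ is a locally Lipschitz functional of the geometric $p$-rough path $(1,b^1_{s,t},b^2_{s,t})$ (with $p=2/\theta\in(2,3)$) in the $p$-variation distance. The straight path $l_{\xi}$ lifts canonically to a smooth rough path, and solving the horizontal SDE driven by $l_{\xi}$ amounts to integrating a constant horizontal vector field, giving $X(t,l_{\xi})=\exp_{x_0}(tu_0\xi)=\cxy(t)$. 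The Besov-type norms inside $\Xi$ dominate the homogeneous $p$-variation distance between the rough-path lifts of $b$ and $l_{\xi}$; more precisely, for $m$ large enough the control function of the difference rough path satisfies $\omega(s,t)\le C\,\Xi(b-l_{\xi})^{1/(4m)}\,|t-s|$.

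The sharp Hölder weights then come out as follows. The Lipschitz estimate for RDE solutions starting from the common initial frame yields
\begin{align*}
|X(t,b)-\cxy(t)|\;\le\; C\,\omega(0,t)^{1/p}\,\Xi(b-l_{\xi})^{1/(4m)}\;\le\; C'\,\Xi(b-l_{\xi})^{1/(4m)}\,t^{\theta/2},
\end{align*}
which gives $(\ref{rough path estimate1})$ after $\ep'$ is chosen small. For $(\ref{rough path estimate2})$ I exploit the endpoint identity $X(1,b)=y_0=\cxy(1)$: applying the RDE continuity to the time-reversed equation produces a symmetric bound $|X(t,b)-\cxy(t)|\le C(1-t)^{\theta/2}\Xi(b-l_{\xi})^{1/(4m)}$ near $t=1$, and the analogous estimate for the frames $r(t,u_0,b)$ together with the $C^{\infty}$-smoothness of $\nabla^2 k$ in a neighborhood of $\cxy$ transfers to the trivialized Hessian. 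Estimate $(\ref{rough path estimate3})$ is a direct application of Lyons' continuity theorem to the iterated integral $I^2_{t,1}(b)$, which is continuous in the driving rough path uniformly in the lower limit $t$; the factor $\|\varphi'\|_{\infty}$ appears because after an integration by parts the sensitivity of the integrand in $\varphi$ is captured by $\|\varphi'\|_{\infty}$.

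For Part~(2) I would differentiate $\tilde{\eta}(\gamma)=\eta(\Xi(b-l_{\xi}))$ in a Cameron--Martin direction $h\in\Ho$. A perturbation of $\gamma$ by $h$ induces a perturbation of the anti-stochastic development $b$ by an element $\tilde{h}\in\H$, related to $h$ by the Cruzeiro--Malliavin derivative of the development map; this map is a bounded operator $\Ho\to\H$ whose norm depends only on the Ricci curvature of $M$. By the chain rule,
\begin{align*}
D_0\tilde{\eta}(\gamma)\cdot h \;=\; \eta'(\Xi(b-l_{\xi}))\cdot D\Xi(b-l_{\xi})[\tilde{h}].
\end{align*}
Since $\eta$ has compact support, $\eta'$ vanishes outside a set on which $\Xi(b-l_{\xi})$ is bounded; on that set, differentiating the Besov-type integrals defining $\Xi$ in a direction $\tilde{h}\in\H$ and using Hardy's inequality for the iterated-integral term produces a bound linear in $|\tilde{h}|_{\H}$. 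Combining these and taking the supremum over $h$ gives $|D_0\tilde{\eta}|_{\Ho}\le C$, with $C$ depending only on the geometry of $M$ and on $\|\eta'\|_{\infty}$, hence independent of $\la$.

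The main obstacle will be Part~(1), specifically extracting the \emph{sharp} weights $t^{\theta/2}$ and $(1-t)^{\theta/2}$ rather than a mere uniform bound. Standard rough-path continuity gives a supremum estimate, but the endpoint Hölder decay requires careful bookkeeping of the control function near $t=0$ and $t=1$, and for $(\ref{rough path estimate2})$ the time-reversal must be carried out in a way compatible with the fixed frame $u_0$. A secondary technicality is ensuring that the smooth-Malliavin version of the frame process coincides with its rough-path-pathwise realization on the slim-set complement $\Omega$, which is handled by the $\infty$-quasi-continuous redefinition noted just before the lemma.
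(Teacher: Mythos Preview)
Your plan for $(\ref{rough path estimate1})$ and $(\ref{rough path estimate3})$ matches the paper: both follow from $c_{x_0,y_0}(t)=X(t,l_\xi)$ and Lyons' continuity theorem for $p$-rough paths.

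For $(\ref{rough path estimate2})$ there is a real gap. Your time-reversal idea runs into the obstacle you yourself flag: the terminal frames $r(1,u_0,b)$ and $r(1,u_0,l_\xi)$ need \emph{not} agree (only the base points do), so the reversed horizontal SDE has different initial data and the continuity theorem does not give a small difference near $t=1$. The paper sidesteps this with an algebraic observation you are missing: since $(\nabla^2 k)(y_0)=I_{T_{y_0}M}$, conjugation by \emph{any} orthonormal frame leaves it invariant, hence $\overline{\nabla^2 k(X(b))}_1=\overline{\nabla^2 k(X(l_\xi))}_1=I$ regardless of the frames. One then writes
\[
\overline{\nabla^2 k(X(b))}_t-\overline{\nabla^2 k(X(l_\xi))}_t
=\bigl\{\overline{\nabla^2 k(X(b))}_t-\overline{\nabla^2 k(X(b))}_1\bigr\}
-\bigl\{\overline{\nabla^2 k(X(l_\xi))}_t-\overline{\nabla^2 k(X(l_\xi))}_1\bigr\}
\]
and applies the continuity theorem to this difference of increments over $[t,1]$, which produces the factor $(1-t)^{\theta/2}$ directly. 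No time-reversal is needed, and the frame mismatch is irrelevant.

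For Part~(2) your central claim is wrong: the map $h\mapsto (D_0)_h b$ is \emph{not} a bounded operator $\Ho\to\H$. The formula~$(\ref{D0b})$ reads
\[
(D_0)_h b(t)=h(t)+\int_0^t\int_0^s\overline{R(\gamma)}_u\bigl(h(u),\circ db(u)\bigr)\bigl(\circ db(s)\bigr),
\]
and the second term is an iterated Stratonovich integral with only Brownian-type regularity; it is not absolutely continuous, so $(D_0)_h b\notin\H$ in general. This is precisely the ``derivative of the It\^o map is not bounded'' phenomenon recalled in Remark~\ref{first remark}~(1). The paper's argument is instead: push $(D_0)_h b$ through the definition of $\Xi$, so that $(D_0)_h\Xi(b-l_\xi)$ becomes a polynomial in iterated rough-path integrals of $b$; then use that on $\{\eta'\ne 0\}$ the quantity $\Xi(b-l_\xi)$ is bounded, hence so is the control function of the Brownian rough path, hence so are all such iterated integrals (uniformly in $h$ with $|h|_{\Ho}\le 1$ and uniformly in $\la$, since the bounds are pathwise rough-path estimates, not moment bounds). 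Your Hardy-inequality step for a putative $\tilde h\in\H$ does not cover this; the curvature correction is the whole difficulty.
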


\begin{proof}
(1)~
(\ref{rough path estimate1}) and (\ref{rough path estimate3})
follow from the fact that
$\cxy(t)=X(t,l_{\xi})$ and the continuity theorem 
for $p$-rough path $(2<p=\frac{2}{\theta}<3)$.
We prove (\ref{rough path estimate2}).
We have
\begin{align}
\lefteqn{\overline{\nabla^2 k(X(b))}_t-
\overline{\nabla^2 k(X(l_{\xi}))}_t}\nonumber\\
&=\left\{
\overline{\nabla^2 k(X(b))}_t-
\overline{\nabla^2 k(X(b))}_1
\right\}-
\left\{
\overline{\nabla^2 k(X(l_{\xi}))}_t
-\overline{\nabla^2 k(X(l_{\xi}))}_1
\right\}\nonumber\\
&\quad
+\overline{\nabla^2 k(X(b))}_1-
\overline{\nabla^2 k(X(l_{\xi}))}_1\nonumber\\
&=\left\{
\overline{\nabla^2 k(X(b))}_t-
\overline{\nabla^2 k(X(b))}_1
\right\}-
\left\{
\overline{\nabla^2 k(X(l_{\xi}))}_t
-\overline{\nabla^2 k(X(l_{\xi}))}_1
\right\},
\end{align}
where we have used $(\nabla^2k)(y_0)=I_{T_{y_0}M}$
and $X(1,b)=\cxy(1)=y_0$.
Hence it suffices to apply the continuity theorem
for $p$-rough path $(2<p=\frac{2}{\theta}<3)$.

\noindent
(2) 
In the case of the derivative $D$, this immediately follows from
Lemma 7.11 in \cite{aida-semiclassical}.
The proof for $D_0$ is the same.
Here we give a sketch of the proof.
Recall that
\begin{align}
(D_0)_{h}b(t)=
h(t)+\int_0^t\int_0^s\overline{R(\gamma)}_u(h(u),\circ db(u))(\circ
 db(s)).
\label{D0b}
\end{align}
We already used this formula
in the proof of Theorem~\ref{e2la on Px0} for the derivative
$D$.
From this formula, we see that
$D_0\left(\Xi(b-l_{\xi})\right)$ 
are given by iterated stochastic integrals of
$b$ and $\gamma$.
By (\ref{joint law}),
we can apply estimates for integration
with respect to the Brownian rough path for $b\in \Omega$.
Thus, the iterated integrals of solutions of rough differential equations
can be estimated by the control function of the
Brownian rough path.
Since the support of $\eta$ is compact, this implies the desired estimate.
\end{proof}

Now, we are ready to prove our first main theorem.

\begin{proof}[Proof of Theorem~$\ref{main theorem 1}$]
First we prove the upper bound estimate.
This will be done by using (\ref{representation of e2})
and choosing appropriate functions $F$ below.
For that purpose, we prepare a large deviation estimate.
Below, several constants depending on parameters
$\kappa, \ep$ appear.
We use the notation $M(x)$ to denote
positive functions of $x$ which may diverge as
$x\to 0$.
On the other hand, we use the notation $C(x)$ to
denote positive functions of $x$ which converge to $0$
as $x\to 0$.
$M(x)$ and $C(x)$ may change line by line.
Let $\eta$ be a non-negative smooth function such that
$\eta(u)=1$ for $u\le 1$ and
$\eta(u)=0$ for $u\ge 2$.
Let $0<\kappa<1$ and set
\begin{align}
{\eta}_{1,\kappa}(\gamma)&=
\eta\left(\kappa^{-1}\Xi(b-l_{\xi})\right),\quad
 {\eta}_{2,\kappa}(\gamma)
=\left\{1-{\eta}_{1,\kappa}(\gamma)^2\right\}^{1/2}.
\end{align}
By (\ref{D0b}) and Lemma~\ref{lemma from rough path} (2),
there exists a positive constant $M(\kappa)$ such that
\begin{align}
|D_0\eta_{1,\kappa}(\gamma)|+
|D_0\eta_{2,\kappa}(\gamma)|\le M(\kappa)
\qquad \nu_{x_0,y_0}^{\la}-a.s. \gamma.
\label{estimate on cut-off}
\end{align}
From (\ref{rough path estimate1}),
for any $\ep>0$, 
$\sup_{0\le t\le 1}|X(t,b)-\cxy(t)|\le \ep$ holds
if $\kappa$ is sufficiently small and $\eta_{1,\kappa}(\gamma)\ne 0$.
Hence $\eta_{1,\kappa}\in H^{1,2}_0({\cal D})$.
Let $\psi$ be a smooth non-negative function on $\RR$
satisfying $\psi(u)=0$ for $u\le \delta_1$
and $\psi(u)=1$ for $u\ge \delta_2$,
where $0<\delta_1<\delta_2$.
Then there exist $C, C'>0$ which depend on $\psi$ 
such that for large $\la$
\begin{align}
E^{\nu^{\la}_{x_0,y_0}}\left[\psi\left(\Xi(b-l_{\xi})\right)
\right]\le C e^{-C'\la}.\label{rough path exponential decay}
\end{align}
We prove this estimate.
Let $B$ be a standard Brownian motion on $\RR^n$.
Since the Wiener functional $B\mapsto
 X\left(1,\frac{B}{\sqrt{\la}}\right)$
is non-degenerate, by using the integration by parts formula
(see \cite{nualart, shigekawa}),
\begin{align}
\lefteqn{E^{\nu^{\la}_{x_0,y_0}}\left[\psi\left(\Xi(b-l_{\xi})\right)
\right]}\nonumber\\
&=\left(c(y_0)p(1/\la,x_0,y_0)\right)^{-1}
E\left[\psi\left(\Xi\left(\frac{B}{\sqrt{\la}}-l_{\xi}\right)\right)\delta_{y_0}
\left(X\left(1,\frac{B}{\sqrt{\la}}\right)\right)\right]\nonumber\\
&=\left(c(y_0)p(1/\la,x_0,y_0)\right)^{-1}
E\left[
\tilde{\psi}\left(\Xi\left(\frac{B}{\sqrt{\la}}-l_{\xi}\right)\right)
G(\ep,\la,B)\phi_{\ep}\left(X\left(1,\frac{B}{\sqrt{\la}}\right)-y_0\right)
\right],
\end{align}
where $\tilde{\psi}, \phi_{\ep}$ are bounded continuous functions on 
$\RR$ and $\RR^n$ respectively such that
$\tilde{\psi}\subset [\delta_1,\infty)$
and ${\rm supp}\, \phi_{\ep}\subset B_{\ep}(0)$.
Also the random variable $G(\la,\ep,B)$ satisfies that for any $p>1$
\begin{align}
 E\left[|G(\la,\ep,B)|^p\right]^{1/p}\le C_{\ep,p}(\la),
\end{align}
where $C_{\ep,p}(\la)$ is a polynomial function of
$\la$.
Let $q=p/(p-1)$.
By the H\"older inequality,
\begin{align}
 E^{\nu^{\la}_{x_0,y_0}}\left[
\psi\left(\Xi(b-l_{\xi})\right)
\right]\le p(1/\la,x_0,y_0)^{-1}
C_{\ep,p}(\la)\mu(A_{\ep})^{1/q},
\end{align}
where
\begin{align}
 A_{\ep}=\left\{B~\Big |~\Xi\left(\frac{B}{\sqrt{\la}}-l_{\xi}\right)\ge
 \delta_1,
\, \,
\left|X\left(1,\frac{B}{\sqrt{\la}}\right)-y_0\right|\le \ep
\right\}.
\end{align}
By the large deviation estimate for Brownian rough path
(\cite{friz-victoir, inahama, lqz}),
we have
\begin{align}
 \limsup_{\la\to\infty}
\frac{1}{\la}
\log \mu\left(A_{\ep}\right)
\le
-\frac{1}{2}\inf\left\{\|h\|_{\H}^2~|~
\Xi\left(h-l_{\xi}\right)\ge \delta_1,\,\,
\left|X(1,h)-y_0\right|\le\ep
\right\}=:J_{\ep}.
\end{align}
For sufficiently small $\ep$,
it holds that $J_{\ep}<-\frac{1}{2}d(x_0,y_0)^2$
which can be proved by a contradiction.
Suppose there exists $h_{\ep}\in \H$ such that
$\lim_{\ep\to 0}\|h_{\ep}\|_{\H}\le d(x_0,y_0)$,
$\Xi\left(h_{\ep}-l_{\xi}\right)\ge \delta_1$
and $\left|X(1,h_{\ep})-y_0\right|\le\ep$.
Let $h_0$ be a weak limit point of $h_{\ep}$.
Then $\|h_0\|_{\H}\le d(x_0,y_0)$.
By Lemma 7.12 in \cite{aida-semiclassical},
$\Xi\left(h_0-l_{\xi}\right)=\lim_{\ep\to
 0}\Xi\left(h_{\ep}-l_{\xi}\right)\ge \delta_1$
and $X(1,h_{0})=\lim_{\ep\to 0}X(1,h_\ep)=y_0$.
By the uniqueness of the minimal geodesic between
$x_0$ and $y_0$,
we have $h_0=l_{\xi}$.
This contradicts $\Xi\left(h_0-l_{\xi}\right)\ge \delta_1$.
Hence there exist $\ep>0$ and $\delta>0$ such that
\begin{align}
 E^{\nu^{\la}_{x_0,y_0}}\left[
\psi\left(\Xi(b-l_{\xi})\right)
\right]\le C_{\ep,p}(\la)p(1/\la,x_0,y_0)^{-1}
\exp\left\{-\la\left(\frac{d(x_0,y_0)^2+\delta}{2q}\right)\right\}.
\end{align}
Since $\lim_{\la\to\infty}\frac{\la^{n/2}\exp\left(-\la d(x_0,y_0)^2/2\right)}
{p(1/\la,x_0,y_0)}$ exists, 
by taking $p$ sufficiently large, this proved the desired inequality.

We now apply (\ref{representation of e2}) to prove the upper bound.
Let us fix a positive number $\ep>0$ and
choose $\varphi_{\ep}\in \Ltwoo\cap C^1([0,1],\RR^n)$ 
with $\|\varphi_{\ep}\|=1$ 
such that
\begin{align}
\sigma_1\le \|S\varphi_{\ep}\|^2\le \|(I+T)\varphi_{\ep}\|\le \sigma_1+\ep.
\label{varphiep}
\end{align}
This is possible because of 
Lemma~\ref{S and T1} and Proposition~\ref{S and T2}.
Note that $\|\varphi_{\ep}'\|_{\infty}$ may diverge when
$\ep\to 0$.
Define
\begin{align}
F_{\ep}(\gamma) 
&=
\sqrt{\la}\left(\int_0^1(\varphi_{\ep}(t),db(t))-
\int_0^1(\varphi_{\ep}(t),\xi)dt\right).
\end{align}
Let $\tilde{F}_{\ep}=F_{\ep}\eta_{1,\kappa}\in H^{1,2}_0({\cal D})$.
We estimate the numerator of the ratio in
(\ref{representation of e2}) for $\tilde{F}_{\ep}$.
Since the Besov norm is stronger than the supremum norm,
we have
\begin{align}
|\tilde{F}_{\ep}(\gamma) |\le C\sqrt{\la}M(\ep)C(\kappa).
\end{align}
By (\ref{D0b})
\begin{align}
\left(D_0F_{\ep}(\gamma),h\right)_{\Ho}
&=\sqrt{\la}\int_0^1\left(\varphi_{\ep}(t),h'(t)\right)dt
+\sqrt{\la}\int_0^1\left(
\varphi_{\ep}(t),\int_0^t\overline{R(\gamma)}_u(h(u),\circ db(u))\circ
 db(t)\right)
\nonumber\\
&=
\sqrt{\la}\int_0^1\left(\varphi_{\ep}(t),h'(t)\right)dt\nonumber\\
&\quad +
\int_0^1\left(
\int_t^1\overline{R(\gamma)}_s\left(\int_s^1\varphi_{\ep}(u)db^i(u),\ep_i
\right)\circ
db(t), h'(t)
\right)dt
\end{align}
and so we have
\begin{align}
D_0F_{\ep}(\gamma)_t'&=
\sqrt{\la}\varphi_{\ep}(t)+
\sqrt{\la}\int_t^1\overline{R(\gamma)}_s\left(
\int_s^1\varphi_{\ep}(r)db^i(r),\ep_i\right)\circ db(s)\nonumber\\
& \quad -\sqrt{\la}\int_0^1\int_t^1\overline{R(\gamma)}_s\left(
\int_s^1\varphi_{\ep}(r)db^i(r),\circ \ep_i\right)(\circ db(s))dt\nonumber\\
&=\sqrt{\la}\varphi_{\ep}(t)-
\sqrt{\la}\int_t^1R(s)\left(
\int_0^s\varphi_{\ep}(u)du\right)ds\nonumber\\
&\quad +\sqrt{\la}\int_0^1\int_t^1R(s)\left(
\int_0^s\varphi_{\ep}(u)du\right)dsdt
+I(\la)_t\nonumber\\
&=\sqrt{\la}(I+T)(\varphi_{\ep})(t)+I(\la)_t,
\end{align}
where $R(s)=\overline{R(\cxy)}_s(\cdot,\xi)(\xi)$ and
$I(\la)_t=(D_0F)(X(\cdot,b))_t'-
(D_0F)(X(\cdot,l_{\xi}))_t'.
$
Note that we have used $\varphi_{\ep}\in \Ltwoo$ in the above.
By (\ref{rough path estimate3}), we have
\begin{align}
\sup_{0\le t\le 1}|I(\la)_t|\le
\sqrt{\la}C(\kappa)M(\ep)
\quad \mbox{if $\eta_{1,\kappa}(\gamma)\ne 0.$}
\label{ICkappa}
\end{align}
Thus we have
\begin{align}
 |D_0\tilde{F}_{\ep}(\gamma)|^2&=
\la|(I+T)\varphi_{\ep}|^2\eta_{1,\kappa}^2+
|I(\la)|^2\eta_{1,\kappa}^2+
2\sqrt{\la}((I+T)\varphi_{\ep},I(\la))\eta_{1,\kappa}^2\nonumber\\
&\quad +
F_{\ep}^2|D_0\eta_{1,\kappa}|^2+
2(D_0F_{\ep},D_0\eta_{1,\kappa})\eta_{1,\kappa}.
\end{align}
By (\ref{rough path exponential decay}) and (\ref{ICkappa}),
we get
\begin{align}
\|D_0\tilde{F}_{\ep}\|_{L^2(\nu^{\la}_{x_0,y_0})}^2
\le \la\|(I+T)\varphi_{\ep}\|_{L^2}^2+
\la C(\kappa)M(\ep)+\la CM(\ep)M(\kappa)e^{-C(\kappa)\la}.\label{upper bound 1}
\end{align}
Combining $\|D_0\Psi_{\la}\|\le Ce^{-C'\la}$,
we obtain
\begin{align}
\|D_0\tilde{F}_{\ep}-
\left(\tilde{F}_{\ep},\Psi_{\la}\right)D_0\Psi_{\la}\|_{L^2(\nu^{\la}_{x_0,y_0})}^2
\le\la\|(I+T)\varphi_{\ep}\|_{L^2}^2+\la C(\kappa)M(\ep)+
\la M(\ep)M(\kappa)e^{-C(\kappa)\la}.\label{Dirichlet norm}
\end{align}
We next turn to the estimate of the denominator in 
(\ref{representation of e2}) for
$\tilde{F}_{\ep}$.
To do so, we use COH formula.
For large $\la>0$, by taking $\kappa$ sufficiently small
and combining Lemma~\ref{gong-ma}, Lemma~\ref{lemma from rough path} (1)
and Lemma~\ref{perturbation of M},
we have
\begin{align}
|(J(\gamma)_{\la}-J_0)(D_0\tilde{F}_{\ep})(\gamma)'|_{L^2(0,1)}&\le
\ep |D_0\tilde{F}_{\ep}(\gamma)'|_{L^2(0,1)}.
\end{align}
Therefore,
using $A(\gamma)_{\la}=I+J(\gamma)_{\la}$,
$(S^{-1})^{\ast}=I+J_0$ and
$(S^{-1})^{\ast}(I+T)=S$,
we have
\begin{align}
& A(\gamma)_{\la}(D_0\tilde{F}_{\ep}(\gamma)')_t\nonumber\\
&=
\left(S^{-1}\right)^{\ast}\left(D_0\tilde{F}_{\ep}(\gamma)'\right)_t
+(J(\gamma)_{\la}-J_0)(D_0\tilde{F}_{\ep}(\gamma)')_t
\nonumber\\
&=
\sqrt{\la}\left(S^{-1}\right)^{\ast}(I+T)\varphi_{\ep}(t)\eta_{1,\kappa}+
(S^{-1})^{\ast}I(\la)_t\eta_{1,\kappa}+
F_{\ep}(\gamma)(S^{-1})^{\ast}\left(D_0\eta_{1,\kappa}\right)'_t\nonumber\\
&\qquad +(J(\gamma)_{\la}-J_0)(D_0\tilde{F}_{\ep}(\gamma)')_t
\nonumber\\
&=
\sqrt{\la}S\varphi_{\ep}(t)+I_2(\la),
\label{COHF1}
\end{align}
and
\begin{align}
\|I_2(\la)\|_{L^2(\nu^{\la}_{x_0,y_0})}&\le
\sqrt{\la}M(\ep)e^{-C(\kappa)\la}+\sqrt{\la}C(\kappa)M(\ep)
+\sqrt{\la}M(\ep)M(\kappa)e^{-C(\kappa)\la}\nonumber\\
&\quad +
\ep\sqrt{\la}\left(C+C(\kappa)M(\ep)+M(\ep)M(\kappa)e^{-C(\kappa)\la}\right).
\label{COHF2}
\end{align}
Since $S\varphi_{\ep}(t)$ is a non-random function,
from (\ref{COHF1}) and (\ref{COHF2})
and the COH formula (\ref{COH loop 1}), we obtain
\begin{align}
\|\tilde{F}_{\ep}-E^{\nu^{\la}_{x_0,y_0}}
[\tilde{F}_{\ep}]\|_{L^2(\nu^{\la}_{x_0,y_0})}^2
&\ge \|S\varphi_{\ep}\|^2-C(\kappa)M(\ep)-
M(\ep)M(\kappa)e^{-C(\kappa)\la}\nonumber\\
&\quad -
\ep\left(C+C(\kappa)M(\ep)+M(\ep)M(\kappa)e^{-C(\kappa)\la}\right).
\label{upper bound 2}
\end{align}
Using Lemma~\ref{ground state},
\begin{align}
\|\tilde{F}_{\ep}-
\left(\tilde{F}_{\ep},\Psi_{\la}\right)\Psi_{\la}\|_{L^2(\nu^{\la}_{x_0,y_0})}^2
&=\|\tilde{F}_{\ep}-(\tilde{F}_{\ep},1)\|_{L^2}^2-2
\left(\tilde{F}_{\ep}-(\tilde{F}_{\ep},\Psi_{\la}), 
(\tilde{F}_{\ep},\Psi_{\la})(\Psi_{\la}-1)\right)
\nonumber\\
&\quad +(\tilde{F}_{\ep},1-\Psi_{\la})^2+(\tilde{F}_{\ep},\Psi_{\la})^2
\|1-\Psi_{\la}\|^2\nonumber\\
&\ge\|\tilde{F}_{\ep}-(\tilde{F}_{\ep},1)\|_{L^2}^2
-M(\ep)M(\kappa)e^{-C'\la}.
\label{L2 norm}
\end{align}
Now we set $\ep$ sufficiently small and next $\kappa$ 
 sufficiently small.
By using the estimates (\ref{Dirichlet norm}), (\ref{upper bound 2}), 
(\ref{L2 norm})
and (\ref{varphiep}), we obtain for large $\la$,
\begin{align}
\frac{\|D_0\tilde{F}_{\ep}-(\tilde{F}_{\ep},
\Psi_{\la})D_0\Psi_{\la}\|^2_{L^2(\nu^{\la}_{x_0,y_0})}}
{\|\tilde{F}_{\ep}-(\tilde{F}_{\ep},\Psi_{\la})\Psi_{\la}\|^2
_{L^2(\nu^{\la}_{x_0,y_0})}}
&\le\frac{\la\|(I+T)\varphi_{\ep}\|_{L^2}^2+\la \ep+
\la M(\ep)M(\kappa)e^{-C(\kappa)\la}}
{\|S\varphi_{\ep}\|_{L^2}^2-C\ep-M(\ep)M(\kappa)e^{-C(\kappa)\la}}\nonumber\\
&
\le\frac{\la(\sigma_1+\ep)^2+\la \ep+
\la M(\ep)M(\kappa)e^{-C(\kappa)\la}}
{\sigma_1-C\ep-M(\ep)M(\kappa)e^{-C(\kappa)\la}}.
\end{align}
This completes the proof of the upper bound.

We next prove lower bound estimate.
Take $F\in H^{1,2}_0({\mathcal D})$
such that $\|F\|_{L^2(\nu^{\la}_{x_0,y_0})}=1$ and
$(F,\eta_{1,\kappa})=0$.
By the IMS localization formula,
\begin{equation}
{\mathcal E}(F,F)=
\sum_{i=1,2}{\mathcal E}(F\eta_{i,\kappa},F\eta_{i,\kappa})
-\sum_{i=1,2}
E^{\nu^{\la}_{x_0,y_0}}[|D_0\eta_{i,\kappa}|^2F^2].
\end{equation}
For any $\ep>0$, by taking $\kappa$ sufficiently small and large $\la$,
by Lemma~\ref{coh formula} (1), Lemma~\ref{perturbation of M}, 
Lemma~\ref{lemma from rough path},
\begin{align}
\lefteqn{
\|F\eta_{1,\kappa}-E^{\nu^{\la}_{x_0,y_0}}[F\eta_{1,\kappa}]\|
_{L^2(\nu^{\la}_{x_0,y_0})}^2}
\nonumber\\
& \quad \le
\frac{\left(\|(S^{-1})^{\ast}\|_{op}+C\ep\right)^2}{\la}
E^{\nu^{\la}_{x_0,y_0}}\left[
|D_0(F\eta_{1,\kappa})|^2
\right].\label{COH Feta}
\end{align}
Thus
we have
\begin{align}
\|F\eta_{1,\kappa}\|_{L^2(\nu^{\la}_{x_0,y_0})}^2
\le
\frac{\left(\|(S^{-1})^{\ast}\|_{op}+C\ep\right)^2}{\la}
E^{\nu^{\la}_{x_0,y_0}}\left[
|D_0(F\eta_{1,\kappa})|^2
\right].\label{Feta1}
\end{align}

Now we estimate the Dirichlet norm of $F\eta_{2,\kappa}$.
The log-Sobolev inequality (\ref{lsi1}) implies that
there exists a positive constant $C$ such that
for any $F\in H^{1,2}_0({\mathcal D})$ and
bounded measurable function $V$ on $P_{x_0,y_0}(M)$,
\begin{equation}
{\mathcal E}(F,F)+E^{\nu^{\la}_{x_0,y_0}}
\left[\la^2 VF^2\right]
\ge
-\frac{\la}{C}\log E^{\nu^{\la}_{x_0,y_0}}
\left[e^{-C\la V}\right]\|F\|_{L^2(\nu^{\la}_{x_0,y_0})}^2.
\label{GNS inequality}
\end{equation}
See Theorem 7 in \cite{gross}.
Also see Lemma~\ref{GNS} in the present paper.
Let $\delta$ be a sufficiently small positive number
and define
$
V(\gamma)=\delta 1_{\eta_{2,\kappa}\ne 0}(\gamma),
$
where $1_A$ denotes the indicator function of a set $A$.
By (\ref{GNS inequality}), there exists $\delta'>0$ such that
\begin{align}
&{\mathcal E}(F\eta_{2,\kappa},F\eta_{2,\kappa})\nonumber\\
&\quad =
{\mathcal E}(F\eta_{2,\kappa},F\eta_{2,\kappa})-\la^2
E^{{\nu}^{\la}_{x_0,y_0}}\left[V(F\eta_{2,\kappa})^2\right]
+\la^2
E^{{\nu}^{\la}_{x_0,y_0}}\left[V(F\eta_{2,\kappa})^2\right]\nonumber\\
&\quad \ge
-\frac{\la}{C} \log E^{{\nu}^{\la}_{x_0,y_0}}
\left[e^{C\la V}\right]\|F\eta_{2,\kappa}\|_{L^2(\nu^{\la}_{x_0,y_0})}^2
+\la^2\delta\|F\eta_{2,\kappa}\|_{L^2(\nu^{\la}_{x_0,y_0})}^2\nonumber\\
&\quad\ge
-\frac{\la}{C}\log\left(1+e^{-\la\delta'}\right)
\|F\eta_{2,\kappa}\|_{L^2(\nu^{\la}_{x_0,y_0})}^2
+\la^2\delta\|F\eta_{2,\kappa}\|_{L^2(\nu^{\la}_{x_0,y_0})}^2\nonumber\\
&\quad\ge (\la^2\delta-C\la e^{-\la\delta'})
\|F\eta_{2,\kappa}\|_{L^2(\nu^{\la}_{x_0,y_0})}^2,\label{Feta2}
\end{align}
where in the third inequality we have used the estimate
(\ref{rough path exponential decay}).

By the estimates (\ref{estimate on cut-off}), (\ref{Feta1}),
(\ref{Feta2}) and the fact that 
$\|F\eta_{1,\kappa}\|_{L^2(\nu^{\la}_{x_0,y_0})}^2
+\|F\eta_{2,\kappa}\|_{L^2(\nu^{\la}_{x_0,y_0})}^2=1$,
we get
\begin{equation}
\E^{\la}(F,F) \ge
\la\min\left(\left(\|(S^{-1})^{\ast}\|_{op}+C\ep\right)^{-2},
\la \delta-Ce^{-\la\delta'}\right)-M(\kappa).\label{lbd of F}
\end{equation}
By the definition of $\dirichlet$, this completes the proof.
\end{proof}

\begin{rem}{\rm
Eberle~{\rm \cite{eberle3}} defined a local spectral gap on
${\cal D}$ by
\begin{align}
 e^{\la}_{E}&=
\inf_{F(\ne 0)\in H^{1,2}_0({\cal D})}
\frac{\int_{{\cal D}}|D_0F|^2d\nu^{\la}_{x_0,y_0}}
{\int_{{\cal D}}\left(
F-\frac{1}{\nu^{\la}_{x_0,y_0}({\cal D})}\int_{{\cal D}}
Fd\nu^{\la}_{x_0,y_0}\right)^2d\nu^{\la}_{x_0,y_0}}.
\end{align}
When ${\cal D}$ satisfies 
conditions (1), (2) in Theorem~\ref{main theorem 1},
the above proof shows also that
\begin{align}
 \lim_{\la\to\infty}\frac{e^{\la}_E}{\la}&=
\sigma_1.
\end{align}
Actually, $e^{\la}_E$ is more related to $e^{\la}_2$ than
$\dirichlet$.
}
\end{rem}

\section{A proof of existence of spectral gap}\label{proof of existence
of spectral gap}

We consider the following setting.
Let $(\Omega,{\mathfrak F},\nu)$ be a probability space
and consider a Dirichlet form $(\E,{\cal F})$ defined on
$L^2(\Omega,\nu)$.
We assume the existence of square field operator $\Gamma$ such that
$$
\E(F,F)=\int_{\Omega}\Gamma(F,F)d\nu,\qquad F\in {\cal F}.
$$
Also we assume $1\in {\cal F}$ and the diffusion property.
That is, for any $\varphi\in C^1_b(\RR)$ and $F\in {\cal F}$,
it holds that $\varphi(F)\in {\cal F}$ and
\begin{align}
 \Gamma(\varphi(F),\varphi(F))=\Gamma(F,F)\varphi'(F)^2.
\end{align}
We write $\Gamma(F)=\Gamma(F,F)$.
We already used the following well known estimate
(\cite{gross}).

\begin{lem}\label{GNS}
 Suppose that for any $F\in {\cal F}$,
\begin{align}
 \int_{\Omega}F(w)^2\log(F(w)^2/\|F\|_{L^2(\nu)}^2)d\nu &\le 
\alpha\E(F,F).\label{LSI}
\end{align}
Then
for any bounded measurable function $V$, we have
\begin{align}
 \E(F,F)+\int_{\Omega}V(w)F(w)^2d\nu(w)&
\ge -\frac{1}{\alpha}\log\left(
\int_{\Omega}e^{-\alpha V(\omega)}d\nu(\omega)\right)
\|F\|_{L^2(\nu)}^2\qquad \mbox{for any $F\in {\cal F}$}.
\end{align}
\end{lem}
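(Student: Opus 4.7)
The plan is to derive the inequality from the Donsker--Varadhan entropy duality (the Young-type inequality for relative entropy), which says that for any measurable $G$ with $\int_\Omega e^G\,d\nu < \infty$ and any nonnegative $f$ with $\int_\Omega f\,d\nu = 1$,
\begin{align*}
\int_\Omega G\,f\,d\nu \le \log\!\int_\Omega e^G\,d\nu + \int_\Omega f\log f\,d\nu.
\end{align*}
This is a routine consequence of the Legendre duality between $u\log u$ and $e^u-1$, applied to the density $f$.

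First I would reduce to the normalization $\|F\|_{L^2(\nu)}=1$, since both sides of the desired inequality are homogeneous of degree $2$ in $F$. Then I apply the duality with $f=F^2$ and $G=-\alpha V$, which is admissible because $V$ is bounded (so $\int e^{-\alpha V}d\nu < \infty$) and the entropy $\int F^2 \log F^2\,d\nu$ is well-defined and finite for $F\in\mathcal{F}$ by the hypothesis (\ref{LSI}) together with $\mathcal{E}(F,F)<\infty$. This yields
\begin{align*}
-\alpha \int_\Omega V F^2\,d\nu \;\le\; \log\!\int_\Omega e^{-\alpha V}\,d\nu \;+\; \int_\Omega F^2\log F^2\,d\nu.
\end{align*}
Now invoke the log-Sobolev inequality (\ref{LSI}) to replace the entropy term with $\alpha\mathcal{E}(F,F)$, then divide by $\alpha$ and rearrange to obtain
\begin{align*}
\mathcal{E}(F,F) + \int_\Omega V F^2\,d\nu \;\ge\; -\frac{1}{\alpha}\log\!\int_\Omega e^{-\alpha V}\,d\nu.
\end{align*}
Restoring the factor $\|F\|_{L^2(\nu)}^2$ (by applying the normalized case to $F/\|F\|_{L^2(\nu)}$) gives the stated inequality.

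There is no real obstacle here; the diffusion and square-field-operator structure introduced before the lemma play no role in this particular argument, which uses only the variational characterization of entropy and the hypothesized log-Sobolev inequality. The only point requiring a line of care is the applicability of the duality formula, which is guaranteed by boundedness of $V$ and finiteness of $\mathcal{E}(F,F)$ for $F\in\mathcal{F}$.
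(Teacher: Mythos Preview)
Your argument is correct and is the standard proof of this classical implication from log-Sobolev to the Gross--Rothaus lower bound. The paper does not actually prove this lemma: it is stated as a ``well known estimate'' with a reference to Gross~\cite{gross} (and the same inequality is invoked earlier at (\ref{GNS0}) with the same citation), so there is no proof in the paper to compare against. Your use of the entropy--energy duality $\int Gf\,d\nu\le \log\int e^G\,d\nu+\int f\log f\,d\nu$ with $f=F^2/\|F\|_{L^2}^2$ and $G=-\alpha V$, followed by an application of (\ref{LSI}), is exactly the argument one finds in the cited literature; your remark that boundedness of $V$ ensures integrability of $e^{-\alpha V}$ and that the diffusion/square-field structure is not needed here is accurate.
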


Note that in the above lemma,
$(\E,{\cal F})$ is not necessarily a closed form
and the lemma holds for any bilinear form $(\E,{\cal F})$
satisfying the logarithmic Sobolev inequality
(\ref{LSI}).
The spectral gap $e_2$ is defined by
$$
e_2=\inf\left\{
{\cal E}(F,F)~\Big |~\|F\|_{L^2(\nu)}=1, \int_{\Omega}F(w)d\nu(w)=0,
F\in {\cal F}
\right\}.
$$

\begin{thm}\label{main theorem 3}
Let ${\cal F}_0$ be a dense linear subset of
${\cal F}$ with respect to ${\cal E}_1$-norm.
Suppose that there exist positive numbers $\alpha,\beta,r_0$
and $\rho\in {\cal F}$ such that 
$\Gamma(\rho)(w)\le 1$ $\nu$-a.s. $w$ and
\begin{align}
\int_{\Omega}F(w)^2\log(F(w)^2/\|F\|_{L^2(\nu)}^2)d\nu &\le 
\alpha\int_{\Omega}\rho(w)^2\Gamma(F,F)(w)d\nu(w),\quad
\mbox{for all $F\in {\cal F}_0$},\label{LSI general}\\
\nu\left(\rho\ge r\right)&\le e^{-\beta r^2},\qquad  \mbox{for all
$r\ge r_0$}.
\end{align} 
Then
\begin{align}
e_2&\ge
\frac{1}{4}
\min\left(
\frac{1}{8\alpha R(\alpha,\beta,r_0)^2}, ~\frac{\beta}{36\alpha}
\right),
\end{align}
where
\begin{align}
R(\alpha,\beta,r_0)&=
\max\left(\sqrt{\frac{2}{\beta}},~
\frac{192\alpha}{\sqrt{\beta}},~
48\sqrt{\frac{\alpha}{\beta}},
~r_0\right).
\end{align}
\end{thm}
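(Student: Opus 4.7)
The proof adapts the IMS localization scheme used for Theorem~\ref{RN} to the present weighted setting. Fix $R=R(\alpha,\beta,r_0)$ from the statement and pick a smooth $\chi:[0,\infty)\to[0,1]$ with $\chi\equiv 1$ on $[0,1]$, $\chi\equiv 0$ on $[2,\infty)$, $|\chi'|\le 2$; set $\chi_{0,R}(w)=\chi(\rho(w)/R)$ and $\chi_{1,R}=\sqrt{1-\chi_{0,R}^2}$. The diffusion property together with $\Gamma(\rho)\le 1$ yields $\Gamma(\chi_{0,R})+\Gamma(\chi_{1,R})\le C/R^2$ $\nu$-a.s. For $F\in\mathcal F_0$ with $\|F\|_{L^2(\nu)}=1$ and $\int F\,d\nu=0$, the IMS identity
\[
\E(F,F)=\E(F\chi_{0,R},F\chi_{0,R})+\E(F\chi_{1,R},F\chi_{1,R})-\int(\Gamma(\chi_{0,R})+\Gamma(\chi_{1,R}))F^2\,d\nu
\]
reduces the problem to bounding the two localized Dirichlet energies from below, the remainder being at most $C/R^2$.

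For the inner piece, $F\chi_{0,R}$ is supported on $\{\rho\le 2R\}$ and, by diffusion, $\Gamma(F\chi_{0,R})$ vanishes outside this set; hence $\int\rho^2\Gamma(F\chi_{0,R})\,d\nu\le 4R^2\,\E(F\chi_{0,R},F\chi_{0,R})$, and hypothesis (\ref{LSI general}) collapses to a standard log-Sobolev inequality with constant $4\alpha R^2$. The standard fact that such an LSI implies a Poincar\'e inequality with constant $2\alpha R^2$ gives
\[
\E(F\chi_{0,R},F\chi_{0,R})\ge\frac{1}{2\alpha R^2}\Bigl(\|F\chi_{0,R}\|^2-\bigl(\textstyle\int F\chi_{0,R}\,d\nu\bigr)^2\Bigr),
\]
and the mean-zero condition combined with Cauchy--Schwarz and the tail bound gives $\bigl(\int F\chi_{0,R}\,d\nu\bigr)^2=\bigl(\int F(\chi_{0,R}-1)\,d\nu\bigr)^2\le\nu(\rho\ge R)\le e^{-\beta R^2}$ whenever $R\ge r_0$.

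For the outer piece, write $G=F\chi_{1,R}$, supported on $A=\{\rho\ge R\}$ with $\nu(A)\le e^{-\beta R^2}$. A Jensen argument (the entropy of a probability density on $A$ is at least $\log(1/\nu(A))$) gives $\mathrm{Ent}_\nu(G^2)\ge -\|G\|^2\log\nu(A)\ge\beta R^2\|G\|^2$, and combining with the weighted LSI applied to $G$,
\[
\beta R^2\|G\|^2\le\alpha\int\rho^2\Gamma(G)\,d\nu.
\]
The crux of the argument is to dominate $\int\rho^2\Gamma(G)\,d\nu$ by a multiple of $R^2\E(F,F)$ plus an exponentially small error. Using $\Gamma(G)\le 2\chi_{1,R}^2\Gamma(F)+2F^2\Gamma(\chi_{1,R})$, the $\Gamma(\chi_{1,R})$-term is supported on the shell $\{R\le\rho\le 2R\}$ and contributes $O(\|F\|^2)$. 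The first term $\int\rho^2\chi_{1,R}^2\Gamma(F)\,d\nu$ is handled by an iterated dyadic cutoff at scales $R,2R,4R,\ldots$: on the annular shell $\{2^kR\le\rho<2^{k+1}R\}$ one bounds $\rho^2\le 4^{k+1}R^2$, while the Gaussian tail $\nu(\rho\ge 2^kR)\le e^{-\beta 4^kR^2}$ ensures that the resulting geometric series of contributions collapses to $CR^2\E(F,F)+(\text{exp.\ small})$, from which $\E(G,G)\ge(\beta/(C'\alpha))\|G\|^2-(\text{exp.\ small})$ follows.

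Combining the inner and outer estimates with the IMS remainder $C/R^2$ and the identity $\|F\chi_{0,R}\|^2+\|F\chi_{1,R}\|^2=1$ produces a bound of the form
\[
\E(F,F)\ge\min\!\left(\frac{1}{2\alpha R^2},\frac{\beta}{C'\alpha}\right)-(\text{exp.\ small})-\frac{C}{R^2},
\]
and the explicit choice of $R=R(\alpha,\beta,r_0)$ in the statement is designed so that the four conditions $R\ge\sqrt{2/\beta}$, $R\ge r_0$, $R\ge 192\alpha/\sqrt\beta$, $R\ge 48\sqrt{\alpha/\beta}$ each absorb one type of error, leaving the numerical factors that appear in the conclusion. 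Density of $\mathcal F_0$ in $\mathcal F$ with respect to $\E_1$-norm extends the inequality to all of $\mathcal F$, and taking the infimum over $F$ completes the proof. The main obstacle is the iterated dyadic bound on $\int\rho^2\Gamma(G)\,d\nu$: controlling the weighted Dirichlet energy on the unbounded outer region requires the repeated use of the Leibniz-type estimate $\Gamma(\phi F)\le 2\phi^2\Gamma(F)+2F^2\Gamma(\phi)$ in tandem with the Gaussian tail, and it is here that the specific numerical constants in $R(\alpha,\beta,r_0)$ are determined.
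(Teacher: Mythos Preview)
Your inner-piece argument (Poincar\'e via the weighted LSI restricted to $\{\rho\le 2R\}$) matches the paper exactly. The gap is in your outer-piece estimate. You obtain $\beta R^2\|G\|^2\le\alpha\int\rho^2\Gamma(G)\,d\nu$ and then claim that a dyadic decomposition plus the tail bound $\nu(\rho\ge 2^kR)\le e^{-\beta 4^kR^2}$ collapses $\int\rho^2\chi_{1,R}^2\Gamma(F)\,d\nu$ to $CR^2\E(F,F)$. But the tail bound controls the \emph{measure} of $\{\rho\ge 2^kR\}$, not $\int_{\{\rho\ge 2^kR\}}\Gamma(F)\,d\nu$; nothing prevents $\Gamma(F)$ from concentrating where $\rho$ is large. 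On the $k$-th dyadic shell you only have $\int_{A_k}\rho^2\Gamma(F)\,d\nu\le 4^{k+1}R^2\E(F,F)$, and summing in $k$ diverges. The geometric factor $4^k$ is not beaten by any exponential decay coming from the hypothesis, because no such decay is available for $\int\Gamma(F)$.

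The paper avoids this by moving the infinite decomposition up to the IMS level: it takes a partition $\{\chi_k\}_{k\ge 0}$ with $\operatorname{supp}\chi_k\subset[Rk,R(k+2)]$ and treats each $F\tilde\chi_k$ separately. On the $k$-th piece the weight $\rho^2$ is bounded by $R^2(k+2)^2$, so the weighted LSI becomes a genuine LSI with constant $\alpha R^2(k+2)^2$, and Lemma~\ref{GNS} with $V=\delta\,1_{[Rk,R(k+2)]}(\rho)$ and the tail bound yields $\E(F\tilde\chi_k,F\tilde\chi_k)\ge(\delta-o(1))\|F\tilde\chi_k\|^2$ uniformly in $k\ge 1$, with $\delta=\beta/(18\alpha)$. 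Summing in $k$ is now harmless since the $\|F\tilde\chi_k\|^2$ sum to at most $1$. Your Jensen/entropy lower bound, if applied to each $F\tilde\chi_k$ rather than to the single tail piece $G$, would give the same conclusion; the point is that the dyadic cutoffs must enter the IMS identity, not be applied after the fact to $\int\rho^2\Gamma(G)$. There is one further subtlety you have not addressed: with an infinite partition the IMS remainder is $\frac{8}{R^2}\int_{\rho\ge R}F^2\,d\nu$ rather than simply $8/R^2$, and the paper needs a separate convex-combination argument (running the estimate at scales $R$ and $2R$ and interpolating with a parameter $\varepsilon$) to absorb it.
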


\begin{proof}
Let $R\ge r_0$.
We consider a partition of unity $\{\chi_k\}_{k\ge 0}$
on $[0,\infty)$
such that
\begin{itemize}
\item[(i)] $\chi_k$ is a $C^1$ function,
\item[(ii)] $\chi_0(u)=1$ for $0\le u\le R$
and $\chi_0(u)=0$ for $u\ge 2R$,
\item[(iii)]
${\rm supp}\, \chi_k\subset [Rk, R(k+2)]$\qquad $(k\ge 1)$,
\item[(iv)]
$\sum_{k=0}^{\infty}\chi_k(u)^2=1$ for all $u\ge 0$.
\item[(v)] $\sup_{k,u}|\chi_k'(u)|\le\frac{2}{R}$,
\end{itemize}
Define $\tilde{\chi}_k(w)=\chi_k(\rho(w))$.
Let $F\in {\cal F}_0$ and assume $\|F\|_{L^2(\nu)}=1$
and $\int_{\Omega}F(w)d\nu(w)=0$.
By the IMS localization formula,
we have
\begin{align}
\E(F,F)&=\sum_{k=0}^{\infty}\E(F\tilde{\chi}_k,F\tilde{\chi}_k)-
\sum_{k=0}^{\infty}\int_{\Omega}\Gamma(\tilde{\chi}_k)F^2d\nu
\ge\sum_{k=0}^{\infty}\E(F\tilde{\chi}_k,F\tilde{\chi}_k)
-\frac{8}{R^2}\int_{\rho\ge R}F^2d\nu.\label{IMS1}
\end{align}
We estimate each term $\E(F\tilde{\chi}_k,F\tilde{\chi}_k)$.
First, we estimate $\E(F\tilde{\chi}_0,F\tilde{\chi}_0)$.
We have
\begin{align}
 \left|\int_{\Omega}F(w)\tilde{\chi}_0(w)d\nu(w)\right|&=
\left|\int_{\Omega}F(w)(\tilde{\chi}_0(w)-1)d\nu(w)\right|
\le \nu\left(\rho\ge R\right)^{1/2}\le
e^{-\beta R^2/2}.
\end{align}
The log-Sobolev inequality implies the Poincar\'e inequality and we have
\begin{align}
\E(F\tilde{\chi}_0,F\tilde{\chi}_0)
&\ge \frac{1}{2\alpha R^2}
\left(\|F\tilde{\chi}_0\|_{L^2(\nu)}^2-e^{-\beta
 R^2}\right).\label{lower bound E0}
\end{align}
Next we estimate $\E(F\tilde{\chi}_k,F\tilde{\chi}_k)$ for $k\ge 1$.
Let $\phi_k(w)=1_{[Rk,R(k+2)]}(\rho(w))$ and $\delta>0$.
Then by (\ref{LSI general}) and Lemma~\ref{GNS},
\begin{align}
\E(F\tilde{\chi}_k,F\tilde{\chi}_k)&=
\E(F\tilde{\chi}_k,F\tilde{\chi}_k)-
\int_{\Omega}\delta\phi_k(w)
(F\tilde{\chi}_k)^2(w)d\nu(w)+
\int_{\Omega}\delta\phi_k(w)
(F\tilde{\chi}_k)^2(w)d\nu(w)\nonumber\\
&\ge
-\frac{1}{\alpha R^2(k+2)^2}
\log\left(\int_{\Omega}
e^{\alpha\delta R^2(k+2)^2\phi_k(w)}
d\nu(w)\right)
\|F\tilde{\chi}_k\|_{L^2(\nu)}^2\nonumber\\
&\qquad +
\delta\|F\tilde{\chi}_k\|_{L^2(\nu)}^2.
\end{align}
By the tail estimate of $\rho$, we have
\begin{align}
 \int_{\Omega}
e^{\alpha\delta R^2(k+2)^2\phi_k(w)}d\nu(w)
&\le
1+e^{\alpha\delta R^2(k+2)^2-\beta (Rk)^2}.
\end{align}
Hence
\begin{align}
 \E(F\tilde{\chi}_k,F\tilde{\chi}_k)&\ge
\left(
\delta-
\frac{\exp\left\{\left(\alpha\delta
(k+2)^2-\beta k^2\right)R^2\right\}}
{\alpha R^2(k+2)^2}
\right)\|F\tilde{\chi}_k\|_{L^2(\nu)}^2.
\label{lower bound Ek}
\end{align}

For simplicity, we write 
\begin{align}
 G(\delta,\alpha,\beta,R)&=
\delta-\sup_{k\ge 1}\frac{\exp\left\{\left(\alpha\delta
(k+2)^2-\beta k^2\right)R^2\right\}}
{\alpha R^2(k+2)^2}.
\end{align}
Summing the both sides in the inequalities (\ref{lower bound E0}), 
(\ref{lower bound Ek})
and by using the property (iv),
we obtain the following inequality
\begin{align}
 \E(F,F)&\ge
\min\left(\frac{1}{2\alpha R^2},
G(\delta,\alpha,\beta,R)\right)
-\frac{e^{-\beta R^2}}{2\alpha R^2}
-\frac{8}{R^2}\int_{\rho\ge R}F^2d\nu\label{lower bound Ea}
\end{align}
which is denoted by $I(\delta,\alpha,\beta,R)$.
If $\frac{1}{2\alpha}>8$, this inequality 
with large $R$ and small $\delta$ implies
the existence of spectral gap.
In general, we need more considerations.
Since $\sum_{k=1}^{\infty}\|F\tilde{\chi}_k\|_{L^2(\nu)}^2\ge
 \int_{\rho\ge 2R}F(w)^2d\nu(w)$,
by (\ref{IMS1}) and (\ref{lower bound Ek}),
\begin{align}
 \E(F,F)&\ge
G(\delta,\alpha,\beta,R)
\int_{\rho\ge 2R}F^2(w)d\nu(w)-\frac{8}{R^2}.\label{lower bound Eb}
\end{align}
Let $0\le \ep\le 1$.
Multiplying both sides on the inequality 
$I(\delta,\alpha,\beta,2R)$ by $1-\ep$
and the both sides on (\ref{lower bound Eb}) by $\ep$
and taking summation,
we obtain
\begin{align}
 \E(F,F)&\ge
(1-\ep)\min\left(\frac{1}{8\alpha R^2},
G(\delta,\alpha,\beta,2R)
\right)\nonumber\\
&-\frac{(1-\ep)e^{-4\beta R^2}}{8\alpha R^2}
-\frac{8\ep}{R^2}
+\left(\ep G(\delta,\alpha,\beta,R)-\frac{2(1-\ep)}{R^2}\right)
\int_{\rho\ge 2R}F^2(w)d\nu(w).
\end{align}
Now let $\delta=\frac{\beta}{18\alpha}$.
Then by an elementary calculation,
\begin{align}
G(\delta,\alpha,\beta,R)\ge
\frac{\beta}{18\alpha}-\frac{e^{-\beta R^2/2}}{9\alpha R^2}.
\end{align}
Hence,
if
\begin{align}
 \frac{\beta}{18\alpha}\ge\frac{e^{-\beta R^2/2}}{9\alpha R^2}
+\frac{2(1-\ep)}{R^2\ep}, \label{tail estimate integral}
\end{align}
then
\begin{align}
\E(F,F)&\ge
(1-\ep)\min\left(\frac{1}{8\alpha R^2},
\frac{\beta}{18\alpha}-\frac{e^{-2\beta R^2}}{36\alpha R^2}
\right)
-\frac{(1-\ep)e^{-4\beta R^2}}{8\alpha R^2}
-\frac{8\ep}{R^2}\label{lower bound Ec}
\end{align}
By choosing $\ep,R$ appropriately,
we give a lower bound for $\E(F,F)$.
First, let us choose $\ep$ such that
\begin{align}
 \ep=\min\left(\frac{1}{2},~\frac{1}{512\alpha}\right).
\end{align}
We next choose $R$ such that
\begin{align}
\max\left(\frac{e^{-\beta R^2/2}}{9\alpha R^2},~
\frac{2}{R^2\ep}\right)
&\le \frac{\beta}{36\alpha}.\label{R1}
\end{align}
This condition is equivalent to
\begin{align}
 e^{-\beta R^2/2}\le \frac{1}{4}\beta R^2,\quad 
R^2\ge \frac{72\alpha}{\beta \ep}.
\end{align}
Under this condition, the inequality (\ref{tail estimate integral}) holds and 
by using (\ref{lower bound Ec}), we have
\begin{align}
 \E(F,F)&\ge
\frac{1}{2}\min\left(\frac{1}{8\alpha R^2},~
\frac{\beta}{36\alpha}
\right)
-\frac{e^{-4\beta R^2}}{8\alpha R^2}
-\frac{8\ep}{R^2}.\label{lower bound Ed}
\end{align}
Furthermore, we restrict $R$ so that
\begin{align}
 \max\left(
\frac{e^{-4\beta R^2}}{8\alpha R^2},~
\frac{8\ep}{R^2}
\right)\le 
\frac{1}{8}\min\left(\frac{1}{8\alpha R^2},~
\frac{\beta}{36\alpha}
\right).\label{R2}
\end{align}
This condition is equivalent to
\begin{align}
 e^{-2\beta R^2}\le \frac{1}{8},\quad
e^{-4\beta R^2}\le \frac{\beta R^2}{36},\quad
\ep\le \frac{1}{512\alpha}, \quad 
R^2\ge 
\frac{48^2\alpha}{\beta}\ep.
\end{align}
Thus,
(\ref{R1}) and (\ref{R2}) hold if
\begin{align}
 R\ge\max\left(\sqrt{\frac{2}{\beta}},~
48\sqrt{\frac{\alpha}{\beta}}, ~\sqrt{\frac{72\alpha}{\beta\ep}}\right).
\end{align}
Combining the inequalities (\ref{lower bound Ed})
and (\ref{R2}), we obtain the desired estimate.
\end{proof}

\section{Proof of Theorem~\ref{main theorem 2}}

We prove Theorem~\ref{main theorem 2}
by using the argument in the proof of
Theorem~\ref{main theorem 1} and Theorem~\ref{main theorem 3}.
To this end, we need a tail estimate of
$\rho_{y_0}(\gamma)$.

\begin{lem}\label{main lemma 3}
Let $M$ be an $n$-dimensional rotationally symmetric Riemannian manifold
with a pole $y_0$.
Suppose $\|\varphi'\|_{\infty}<\infty$
and Assumption~{\rm \ref{assumption A}} 
is satisfied.
Let $\la_0>0$.
Let $\rho_{y_0}(\gamma)=1+\max_{0\le t\le 1}d(y_0,\gamma(t))$.
Then there exists a positive constant
$r_0$ which depends on $\varphi$, $\la_0$, $d(x_0,y_0)$ and
the dimension $n$
and a positive constant $C_2$ which depends only on $n$
such that
\begin{align}
\nu^{\la}_{x_0,y_0}\left(\rho_{y_0}(\gamma)\ge r\right)&
\le e^{-C_2\la r^2}
\quad \mbox{for all $r\ge r_0$ and $\la\ge \la_0$}.\label{tail estimate}
\end{align}
\end{lem}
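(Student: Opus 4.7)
The plan is to reduce everything to two-sided Gaussian heat kernel bounds via the standard disintegration of the pinned measure. Let $\tau_r=\inf\{t\in[0,1]:d(y_0,\gamma(t))\ge r\}$, so that $\{\rho_{y_0}(\gamma)\ge r\}\subset\{\tau_{r-1}\le 1\}$. The strong Markov property of $\nu^{\la}_{x_0}$ applied at $\tau_r\wedge s$ together with the defining relation for the pinned measure yields, for any $s\in(0,1)$,
\begin{equation*}
\nu^{\la}_{x_0,y_0}(\tau_r\le s)=\frac{1}{p(1/\la,x_0,y_0)}\,E^{\nu^{\la}_{x_0}}\!\left[1_{\{\tau_r\le s\}}\,p\!\bigl((1-\tau_r)/\la,\gamma(\tau_r),y_0\bigr)\right].
\end{equation*}

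I would then split $\{\tau_r\le 1\}$ into $\{\tau_r\le 1/2\}$ and $\{1/2<\tau_r\le 1\}$. On the first event $1-\tau_r\ge 1/2$ and $d(\gamma(\tau_r),y_0)=r$, so Assumption~\ref{assumption A}\,(1) gives $p((1-\tau_r)/\la,\gamma(\tau_r),y_0)\le C(2\la)^{n/2}e^{-C'\la r^2}$. For the second event I invoke time-reversal: $\tilde\gamma(t):=\gamma(1-t)$ has law $\nu^{\la}_{y_0,x_0}$, so the event $\{1/2<\tau_r\le 1\}$ becomes $\{\tilde\tau_r\le 1/2\}$ for the reversed path starting at $y_0$. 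Applying the same disintegration formula to $\nu^{\la}_{y_0,x_0}$, noting $d(\tilde\gamma(\tilde\tau_r),x_0)\ge r-d(x_0,y_0)$ and $p(1/\la,y_0,x_0)=p(1/\la,x_0,y_0)$, produces an analogous bound with $r$ replaced by $r-d(x_0,y_0)$.

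To turn the numerator bound into the claimed probability estimate I need a matching Gaussian lower bound $p(1/\la,x_0,y_0)\ge c_1\la^{n/2}\exp(-c_2\la d(x_0,y_0)^2)$ valid for all $\la\ge\la_0$. This is a Li--Yau type inequality and it holds here because $\|\varphi'\|_\infty<\infty$ implies the sectional curvature is bounded (cf.\ the remark following Lemma~\ref{sufficient condition for assumption A}) and small geodesic balls have a uniform positive volume lower bound (as already used in the proof of that lemma). Combining the upper and lower bounds gives
\begin{equation*}
\nu^{\la}_{x_0,y_0}(\tau_r\le 1)\le C_3\la^{n/2}\exp\!\bigl(-C'\la(r-d(x_0,y_0))^2+c_2\la d(x_0,y_0)^2\bigr),
\end{equation*}
and for $r\ge r_0(\varphi,\la_0,d(x_0,y_0),n)$ sufficiently large the exponent dominates the $\la^{n/2}$ prefactor and is bounded above by $-C_2\la r^2$ with $C_2$ depending only on $n$; applying this with $r$ replaced by $r-1$ finishes the proof. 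The main technical point I expect to have to verify carefully is the Gaussian lower bound on $p(1/\la,x_0,y_0)$ directly from the stated hypotheses, since once it is in hand the rest is a routine application of the strong Markov property and the time-reversal invariance of the Brownian bridge.
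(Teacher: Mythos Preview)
Your approach is correct and genuinely different from the paper's. The paper first derives a tail estimate for the \emph{unpinned} measure $\nu^{\la}_{z_0}$ by writing the radial process $Y_t=d(X_t,y_0)$ as a one-dimensional SDE (this is where rotational symmetry enters explicitly), comparing it via Ikeda--Watanabe with a drifted Bessel process, and then reading off the tail from the law of $|B^{(n)}_t+\sqrt{\la}\,d(z_0,y_0)\mathbf{e}|$. Only afterwards does the paper pass to the pinned measure via the density $p((1-t)/\la,y_0,\gamma(t))/p(1/\la,y_0,x_0)$ at $t=1/2$, together with the Gaussian upper bound and Varadhan's lower bound on $p(1/\la,x_0,y_0)$, plus the same time-reversal trick you use for the second half of the interval. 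Your route, by contrast, skips the unpinned comparison entirely: you feed the strong Markov disintegration directly into the two-sided heat-kernel bounds, so rotational symmetry is never used beyond supplying those bounds. Your argument is more portable (it works on any manifold with two-sided Gaussian bounds), while the paper's Bessel comparison is more hands-on and yields the Gaussian exponent $1/2$ explicitly.

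One point to watch: the lemma asserts that $C_2$ depends \emph{only on $n$}. In your argument the exponent is governed by the constant $C'$ from Assumption~\ref{assumption A}\,(1), which in general depends on the manifold (e.g.\ on the Ricci bound), not just on the dimension. The paper's Bessel comparison produces the exponent from the standard Brownian tail, which is why its $C_2$ is manifold-independent. If you want to match the stated dependence exactly you would need either a sharp Li--Yau constant depending only on $n$ (not available in this generality) or to revert to a comparison argument for the radial process; otherwise your proof delivers the estimate with $C_2$ depending on $\varphi$ as well, which is still sufficient for every application in the paper.
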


\begin{proof}
Let $z_0$ be a point either $x_0$ or
$y_0$.
Let $X_t$ be the Brownian motion starting at $z_0$ on
$M$ 
whose generator is $\Delta/(2\la)$.
First, we give a tail estimate on 
$\rho_{y_0}$ with respect to 
$\nu^{\la}_{z_0}$.
Let $Y_t=d(X_t,y_0)$.
Note that $\Delta_x
 d(x,y_0)=(n-1)\left(\frac{1}{d(x,y_0)}+\varphi'(d(x,y_0))\right)$
and $|\nabla_x d(x,y_0)|=1$.
By the It\^o formula, we have
\begin{align}
Y_t=
d(z_0,y_0)+\frac{1}{\sqrt{\la}}B_t
+\int_0^t\frac{n-1}{2\la}
\left(\frac{1}{Y_s}+\varphi'(Y_s)\right)ds.
\label{sde1}
\end{align}
Here $B_t$ is $1$-dimensional standard Brownian motion.
We can rewrite this equation as
\begin{align}
\sqrt{\la}Y_t&=
\sqrt{\la}d(z_0,y_0)+B_t+
\frac{n-1}{2\sqrt{\la}}\|\varphi'\|_{\infty}t+
\int_0^t\frac{n-1}{2\sqrt{\la}Y_s}ds\nonumber\\
&\quad +\int_0^t\frac{n-1}{2\sqrt{\la}}
\left(\varphi'(Y_s)-\|\varphi'\|_{\infty}\right)ds.
\label{sde2}
\end{align}
Let $\tilde{Z}_t$ be the strong solution to the 
SDE:
\begin{align}
\tilde{Z}_t&=
\sqrt{\la}d(z_0,y_0)+
B_t+\frac{n-1}{2\sqrt{\la}}\|\varphi'\|_{\infty}t+
\int_0^t\frac{n-1}{2\tilde{Z}_s}ds,\label{sde3}
\end{align}
where $B_t$ is the same Brownian motion as in
(\ref{sde2}).
Then by the comparison theorem of 1 dimensional SDE
(see Chapter \Roman{fff} in \cite{ikeda-watanabe}),
we see
\begin{align}
 \sqrt{\la}Y_t\le \tilde{Z}_t\qquad t\ge 0.
\end{align}
Let us define
 $\hat{Z}_t=\tilde{Z}_t-\frac{n-1}{2\sqrt{\la}}\|\varphi'\|_{\infty}t$.
Then $\hat{Z}_t$ satisfies the SDE
\begin{align}
 \hat{Z}_t=\sqrt{\la}d(z_0,y_0)+\int_0^t
\frac{n-1}{2}\frac{1}{\hat{Z}_s+
\frac{n-1}{2\sqrt{\la}}\|\varphi'\|s}ds
+B_t.
\end{align}
Now consider the $n-1$ dimensional
Bessel process $Z_t$ as the strong solution of the SDE:
\begin{align}
Z_t=\sqrt{\la}d(z_0,y_0)+\int_0^t
\frac{n-1}{2}\frac{1}{Z_s}ds
+B_t.
\end{align}
Again by the comparison theorem, we have
\begin{align}
 \hat{Z}_t\le Z_t \qquad t\ge 0.
\end{align}
The law of $\{Z_t\}_{t\ge 0}$ is the same as
the law of $\{|B^{(n)}_t+\sqrt{\la}d(z_0,y_0){\bf e}|\}$,
where $B^{(n)}$ is the standard Brownian motion starting at $0$ and
${\bf e}$ is the unit vector in $\RR^n$.
Thus, for any $r>0$, we have
\begin{align}
 P\left(\max_{0\le t\le 1}Y_t\ge r\right)
&\le P\left(\max_{0\le t\le 1}|B^{(n)}_t+\sqrt{\la}
d(z_0,y_0){\bf e}|+\frac{n-1}{2\sqrt{\la}}\|\varphi'\|_{\infty}\ge
 \sqrt{\la}r\right)\nonumber\\
&\le P\left(\max_{0\le t\le 1}|B^{(n)}_t|\ge 
\sqrt{\la}\left(r-d(z_0,y_0)-
\frac{n-1}{2\la}\|\varphi'\|_{\infty}\right)\right).
\end{align}
Let $C_n=E[\max_{0\le t\le 1}|B^{(n)}_t|]$.
Then there exists $C>0$ such that for any $r>C_n$,
\begin{align}
P\left(\max_{0\le t\le 1}|B^{(n)}_t|\ge r\right)
\le C \exp\left(-\frac{1}{2}(r-C_n)^2\right).
\end{align}
Hence, if
$r>d(z_0,y_0)+\frac{n-1}{2\la}\|\varphi'\|_{\infty}+\frac{C_n}{\sqrt{\la}}$,
then
\begin{align}
 P\left(\max_{0\le t\le 1}
Y_t\ge r\right)&\le
C\exp\left[
-\frac{\la}{2}
\left(r-d(z_0,y_0)-\frac{n-1}{2\la}\|\varphi'\|_{\infty}
-\frac{C_n}{\sqrt{\la}}\right)^2\right].
\end{align}
This shows that there exists $r_0>0$ which depends only on
$d(z_0,y_0), \la_0$ and a positive constant $C$
such that
\begin{align}
 \nu^{\la}_{z_0}\left(\rho_{y_0}(\gamma)\ge r\right)\le
e^{-\la C r^2}\quad \mbox{for all $r\ge r_0$}.\label{tail estimate for rho1}
\end{align}
The tail estimate for $\nu^{\la}_{x_0,y_0}$
can be proved by using 
the absolute continuity 
of $\nu^{\la}_{x_0,y_0}$
with respect to 
$\nu^{\la}_{x_0}$
up to time $t<1$.
The density is given by
\begin{align}
 \frac{d\nu^{\la}_{x_0,y_0}}{d\nu^{\la}_{x_0}}(\gamma)
\Big |_{{\mathfrak F}_t}=
\frac{p\left(\frac{1-t}{\la},y_0,\gamma(t)\right)}
{p\left(\frac{1}{\la},y_0,x_0\right)}=\varphi_{x_0,y_0}(t,\gamma).
\end{align}
Recall that Gaussian upper bound holds
for all $0<t\le 1$ and $x,y\in M$,
\begin{align}
 p(t,x,y)&\le C't^{-n/2}e^{-C''d(x,y)^2/t}.
\end{align}
By Varadhan's heat kernel estimate,
for any $\ep>0$, we have for sufficiently large $\la$,
\begin{align}
p(1/\la,y_0,x_0)\ge
e^{-\la\frac{d(y_0,x_0)^2+\ep}{2}}.
\label{heat kernel estimate}
\end{align}
By using these estimates,
we obtain
\begin{align}
\varphi_{x_0,y_0}\left(\frac{1}{2},\gamma\right)\le
C' \la^{n/2}e^{\frac{\la}{2}\left(d(x_0,y_0)^2+\ep\right)}.
\label{density estimate}
\end{align}
This estimate and (\ref{tail estimate for rho1}) implies that
\begin{align}
 \nu^{\la}_{x_0,y_0}\left(1+\max_{0\le t\le 1/2}d(y_0,\gamma(t))\ge r\right)
\le C' \la^{n/2}e^{\frac{\la}{2}\left(d(x_0,y_0)^2+\ep\right)-\la Cr^2}
\quad \mbox{for all $r\ge r_0$}
\end{align}
Since 
\begin{align}
\nu^{\la}_{x_0,y_0}\left(1+\max_{1/2\le t\le 1}d(y_0,\gamma(t))\ge
 r\right)
=
\nu^{\la}_{y_0,x_0}\left(1+\max_{0\le t\le 1/2}d(y_0,\gamma(t))\ge r\right),
\end{align}
using (\ref{tail estimate for rho1}) with $z_0=y_0$,
similarly,
we obtain the desired tail estimate for $\rho_{y_0}$ under
$\nu^{\la}_{x_0,y_0}$.
\end{proof}

\begin{proof}[Proof of Theorem~$\ref{main theorem 2}$]
Let $\la_0>0$ and consider a positive number
$\la\ge \la_0$.
By Lemma~\ref{main lemma 3},
the assumptions in Theorem~\ref{main theorem 3} are valid for
$\rho=\rho_{y_0}$, $\alpha=C_1/\la, \beta=C_2\la$ and $r_0$.
Hence Theorem~\ref{main theorem 3} implies $e^{\la}_2>0$
for all $\la>0$.
We need to prove the asymptotic behavior
(\ref{asymptotics of ela2}).
We argue similarly to the proof of Theorem~\ref{main theorem 3}.
That is, we use the same functions there and choose
$R, \delta, \ep$ which were defined there.
Let $F\in \FC(P_{x_0,y_0}(M))$
and assume $\|F\|_{L^2(\nu^{\la}_{x_0,y_0})}=1$ and
$E^{\nu^{\la}_{x_0,y_0}}\left[F\right]=0$.
Then by the IMS localization formula
$(\ref{IMS1})$,
we get
\begin{align}
 \E^{\la}(F,F)&\ge
\E^{\la}(F\tilde{\chi}_0,F\tilde{\chi}_0)+
(C\la^2-C'\la)\sum_{k=1}^{\infty}\|F\tilde{\chi}_k\|_{L^2}^2
-\frac{8}{R^2}.
\end{align}
Next we estimate $\E^{\la}(F\tilde{\chi}_0,F\tilde{\chi}_0)$.
Since this is a local estimate, we may vary the Riemannian metric
so that the metric is flat outside certain compact subset.
Take the same function $\eta_{1,\kappa}, \eta_{2,\kappa}$
as in the proof of the lower bound estimate in
Theorem~\ref{main theorem 1}.
Then by the estimate (\ref{rough path exponential decay}),
$|E^{\nu^{\la}_{x_0,y_0}}[F\tilde{\chi}_0\eta_{1,\kappa}]|
\le Ce^{-\la C}$.
In a similar way to the proof of the lower bound in
Theorem~\ref{main theorem 1},
we obtain
\begin{align*}
 \E^{\la}(F\tilde{\chi}_0,F\tilde{\chi}_0)\ge \la
\min\left(\left(\left(\|(S^{-1})^{\ast}\|_{op}+C\ep\right)^{-2},
\la \delta-Ce^{-\la\delta'}\right)\right)
\|F\tilde{\chi}_0\|_{L^2(\nu^{\la}_{x_0,y_0})}^2
-Ce^{-\la C}-M(\kappa).
\end{align*}
Combining the above, the proof of the lower bound is completed.
The upper bound estimate immediately follows from the
estimate (\ref{upper bound 1}) and (\ref{upper bound 2}).
\end{proof}

\noindent
{\bf Acknowledgement}

\noindent
This research was partially supported by Grant-in-Aid for
Scientific Research (B) No.24340023.
The author would like to thank referees for their 
valuable comments and suggestions which improve the 
quality of the paper.

\end{document}